\newtheorem{Theorem}{Theorem}[section]
\newtheorem{Corollary}[Theorem]{Corollary}
\newtheorem{Conjecture}[Theorem]{Conjecture}
\newtheorem{Lemma}[Theorem]{Lemma}
\theoremstyle{remark}
\numberwithin{equation}{section}
\newbox\squ  
\def\nslash{\:\notslash\:}
\def\RtoH{\rho}
\def\HtoR{\sigma}
\def\id{\operatorname{id}}
\def\C{{\mathbb C}}
\def\ell{l}
\def\Q{{\mathbb Q}}
\def\Z{{\mathbb Z}}
\def\0{{\bar 0}}
\def\1{{\bar 1}}
\def\Pol{P\!ol}
\def\B{{\mathcal B}}
\def\End{{\operatorname{End}}}
\def\ch{{\operatorname{ch}\:}}
\def\height{{\operatorname{ht}}}
\def\bi{\text{\boldmath$i$}}
\def\bj{\text{\boldmath$j$}}
\def\eps{{\varepsilon}}
\def\phi{{\varphi}}
\def\Ga{{\Gamma}}
\def\la{{\lambda}}
\def\La{{\Lambda}}
\def\de{{\delta}}
\def\al{{\alpha}}
\def\be{{\beta}}
\def\Tab{\mathscr{T}}
\def\T{{\mathtt T}}
\def\Stab{{\mathtt S}}
\begin{document}

\title[Cyclotomic Hecke algebras]{
Blocks of cyclotomic Hecke algebras and 
khovanov-Lauda algebras}
\author{Jonathan Brundan and Alexander Kleshchev}

\begin{abstract}
We construct an explicit isomorphism between
blocks of cyclotomic Hecke algebras
and (sign-modified) cyclotomic Khovanov-Lauda algebras in type A.
These isomorphisms connect  
the categorification conjecture of Khovanov and Lauda
to Ariki's categorification 
theorem. 
The Khovanov-Lauda algebras are naturally graded, which allows us to 
exhibit a non-trivial 
$\Z$-grading on 
blocks of cyclotomic Hecke algebras, including 
symmetric groups in positive characteristic.
\end{abstract}
\thanks{{\em 2000 Mathematics Subject Classification:} 20C08.}
\thanks{Supported in part by NSF grant  
DMS-0654147. This work was started at MSRI}
\address{Department of Mathematics, University of Oregon, Eugene, Oregon, USA.}
\email{brundan@uoregon.edu, klesh@uoregon.edu}
\maketitle

\section{Introduction}\label{SIntro}

In \cite{KL1,KL2}, Khovanov and Lauda have introduced a remarkable new 
family of algebras 
and formulated a categorification conjecture 
predicting a tight connection between the
representation theory of these algebras and Lusztig's geometric construction 
of canonical bases \cite{Lubook}.
This paper arose as a first attempt to understand the cyclotomic 
Khovanov-Lauda 
algebras in type A by relating them to cyclotomic Hecke algebras
and their rational degenerations. 

Let $F$ be a fixed ground field
and $q \in F^\times$.
Let $e$ be the smallest positive integer such that
$1+q+\cdots+q^{e-1} = 0$, setting $e := 0$
if no such integer exists.
The main result of the article gives an explicit isomorphism
between blocks of cyclotomic Hecke algebras
associated to the complex reflection groups of type
$G(\ell,1,d)$
if $q \neq 1$, or the corresponding degenerate cyclotomic Hecke algebras 
if $q = 1$, and a sign-modified version of
cyclotomic Khovanov-Lauda algebras of type $\text{A}_\infty$
if $e=0$ 
or type $\text{A}_{e-1}^{(1)}$ if $e > 0$.

For $F$ of characteristic zero, this isomorphism connects  
the categorification conjecture of Khovanov and Lauda 
\cite[$\S$3.4]{KL1} in type A
to Ariki's categorification theorem \cite{Ariki}
and
its degenerate analogue \cite{BKcat}.
It doesn't immediately {\em prove} the Khovanov-Lauda conjecture in 
any of these
cases,
because the conjecture takes into account a natural $\Z$-grading
on cyclotomic Khovanov-Lauda algebras which is hard to identify on 
Hecke algebras.

To formulate the main results precisely, let $\Ga$ be the quiver with vertex set  $I:=\Z/e\Z$,
and a directed edge from $i$ to $j$ if $j  = i+1$.
Thus $\Gamma$ is the quiver of type $\text{A}_\infty$ if $e=0$
or $\text{A}_{e-1}^{(1)}$ if $e > 0$, with a specific orientation:
\begin{align*}
\text{A}_\infty&:\qquad\cdots \longrightarrow-2\longrightarrow -1 \longrightarrow 0 \longrightarrow 1 \longrightarrow 
2\longrightarrow \cdots\\
\text{A}_{e-1}^{(1)}&:\qquad0\rightleftarrows 1
\qquad
\begin{array}{l}
\\
\,\nearrow\:\:\:\searrow\\
\!\!2\,\longleftarrow\, 1
\end{array}
\qquad
\begin{array}{rcl}\\
0&\!\rightarrow\!&1\\
\uparrow&&\downarrow\\
3&\!\leftarrow\!&2
\end{array}
\qquad
\begin{array}{l}
\\
\:\nearrow\quad\searrow\\
\!4\qquad\quad \!1\\
\nnwarrow\quad\quad\,\sswarrow\\
\:\:3\leftarrow 2
\begin{picture}(0,0)
\put(-152.5,41){\makebox(0,0){0}}
\put(-13.5,53.5){\makebox(0,0){0}}
\end{picture}
\end{array}
\qquad \cdots
\end{align*}
The corresponding (symmetric) Cartan matrix
$(a_{i,j})_{i, j \in I}$ is defined by
$$
a_{i,j} := \left\{
\begin{array}{rl}
2&\text{if $i=j$},\\
0&\text{if $i \nslash j$},\\
-1&\text{if $i \rightarrow j$ or $i \leftarrow j$},\\
-2&\text{if $i \rightleftarrows j$}.
\end{array}\right.
$$
Here the symbols 
$i \rightarrow j$ and $j \leftarrow i$
both indicate that $j=i+1\neq i-1$,
$i \rightleftarrows j$ indicates that $j = i+1= i-1$,
and
$i \nslash j$
indicates that $j \neq i, i \pm 1$.

To the index set $I$,
we associate two lattices
\begin{equation}\label{pd}
P := 
\bigoplus_{i \in I} \Z\La_i, \qquad Q :=
\bigoplus_{i \in I} \Z\al_i,
\end{equation}
and let
$(.,.):P \times Q \rightarrow \Z$
be the bilinear pairing
defined by $(\La_i, \alpha_j) := \delta_{i,j}$.
Let $P_+$ (resp.\ $Q_+$) denote the subset of $P$ (resp.\ $Q$)
consisting of the elements
that have non-negative coefficients
when written in terms of the given basis.
For $\al\in Q_+$ and $\La\in P_+$ 
define the {\em height} of $\al$ and the {\em level} of $\La$ as follows:
$$
\height(\al):=\sum_{i\in I}(\La_i,\al),\qquad \ell(\La):=\sum_{i\in I}(\La,\al_i).
$$
Let $S_d$ be the symmetric group with 
basic transpositions $s_1,\dots,s_{d-1}$. 
It acts on the left
on the set 
of $d$-tuples $\bi = (i_1,\dots, i_d) \in I^d$ by place
permutation. 
The $S_d$-orbits on $I^d$ are the sets
\begin{equation*}
I^\alpha := \{\bi=(i_1,\dots,i_d) \in I^d\:|\:\alpha_{i_1}+\cdots+\alpha_{i_d} = \alpha\}
\end{equation*}
parametrized by all $\alpha \in Q_+$ of height $d$. 
 
Let $H_d$ denote
the affine Hecke algebra associated to $S_d$
if $q \neq 1$, or its rational
degeneration if $q=1$.
Thus,
$H_d$ is the $F$-algebra defined by generators
$T_1,\dots,T_{d-1}, X_1^{\pm 1}, \dots, X_d^{\pm 1}$
and relations (\ref{QPoly})--(\ref{QCoxeter})
if $q \neq 1$, 
or by generators
$s_1,\dots,s_{d-1}, x_1,\dots,x_d$ and
relations (\ref{EPoly})--(\ref{ECoxeter2})
if $q=1$.
From now on, fix $\La\in P_+$ of level $\ell$
and let $H^\La_d$ be the corresponding cyclotomic quotient of
$H_d$. Thus,
\begin{equation}\label{ECHA}
H_d^\La := \left\{
\begin{array}{ll}
H_d \Big/ \big\langle \,\textstyle\prod_{i\in I}(X_1-q^i)^{(\La,\al_i)}\,\big\rangle
&\text{if $q \neq 1$,}\\
H_d \Big/ \big\langle \,\textstyle\prod_{i\in I}(x_1-i)^{(\La,\al_i)}\,\big\rangle
&\text{if $q = 1$}.
\end{array}\right.
\end{equation}
We refer to this algebra simply as 
the {\em   cyclotomic Hecke algebra} if $q \neq 1$
and the {\em degenerate cyclotomic Hecke algebra} if $q=1$.

There is a natural system 
$\{e(\bi)\:|\:\bi \in I^d\}$
of mutually orthogonal idempotents in $H_d^\La$; 
see $\S$\ref{QSId} or $\S$\ref{SId}
for more details in the two cases.
In addition to $\La \in P_+$, fix also 
$\al\in Q_+$ of height $d$. 
Let 
\begin{equation}\label{bal}
e_{\alpha} := \sum_{\bi \in I^\alpha} e(\bi) \in H^\La_d.
\end{equation}
As a consequence of \cite{LM} or \cite[Theorem~1]{cyclo},
$e_{\alpha}$ is either zero or it is a primitive central idempotent
in $H^\La_d$.
Hence the algebra
\begin{equation}\label{fe}
H^\La_\alpha := e_{\alpha} H^\La_d
\end{equation}
is either zero or it is a single {\em block}
of the algebra $H^\La_d$.
For $h\in H_d^\La$, 
we still write $h$ for  the projection $e_{\al}h\in H_\al^\La$. 
So 
$H^\La_\alpha$ again has 
generators $T_1,\dots,T_{d-1}$, $X_1^{\pm 1},\dots,X_d^{\pm 1}$ 
if $q \neq 1$,
or
$s_1,\dots,s_{d-1}, x_1,\dots,x_d$
if $q=1$. 
However, it seems difficult to describe a complete set of 
relations between these generators at the level of $H^\La_\alpha$. These 
generators are also not well-suited for defining interesting gradings. So, inspired by \cite{KL1,KL2}, 
we introduce an explicit new set of generators
\begin{equation}\label{EKLGens}
\{e(\bi)\:|\: \bi\in I^\al\}\cup\{y_1,\dots,y_{d}\}\cup\{\psi_1, \dots,\psi_{d-1}\}
\end{equation}
of $H_\al^\La$, which we call the {\em Khovanov-Lauda generators}; see
(\ref{QEPolKL}) and (\ref{QQCoxKL}) or
(\ref{jon}) and (\ref{ECoxKL})
in the two cases.

\vspace{2mm}
\noindent
{\bf Main Theorem.} {\em
The algebra $H_\al^\La$ is generated by the elements 
(\ref{EKLGens}) subject 
only to the following relations for $\bi,\bj\in I^\al$ and all 
admissible $r, s$:
\begin{align}
y_1^{(\La,\al_{i_1})}e(\bi)&=0;\label{ERCyc}\\
e(\bi) e(\bj) &= \de_{\bi,\bj} e(\bi);
\hspace{11.3mm}{\textstyle\sum_{\bi \in I^\alpha}} e(\bi) = 1;\label{R1}\\
y_r e(\bi) &= e(\bi) y_r;
\hspace{20mm}\psi_r e(\bi) = e(s_r{\cdot}\bi) \psi_r;\label{R2PsiE}\\
\label{R3Y}
y_r y_s &= y_s y_r;\\
\label{R3YPsi}
\psi_r y_s  &= y_s \psi_r\hspace{42.4mm}\text{if $s \neq r,r+1$};\\
\psi_r \psi_s &= \psi_s \psi_r\hspace{41.8mm}\text{if $|r-s|>1$};\label{R3Psi}\\
\psi_r y_{r+1} e(\bi) 
&= 
\left\{
\begin{array}{ll}
(y_r\psi_r+1)e(\bi) &\hbox{if $i_r=i_{r+1}$},\\
y_r\psi_r e(\bi) \hspace{28mm}&\hbox{if $i_r\neq i_{r+1}$};
\end{array}
\right.
\label{R6}\\
y_{r+1} \psi_re(\bi) &=
\left\{
\begin{array}{ll}
(\psi_r y_r+1) e(\bi) 
&\hbox{if $i_r=i_{r+1}$},\\
\psi_r y_r e(\bi)  \hspace{28mm}&\hbox{if $i_r\neq i_{r+1}$};
\end{array}
\right.
\label{R5}\\
\psi_r^2e(\bi) &= 
\left\{
\begin{array}{ll}
0&\text{if $i_r = i_{r+1}$},\\
e(\bi)&\text{if $i_r \nslash i_{r+1}$},\\
(y_{r+1}-y_r)e(\bi)&\text{if $i_r \rightarrow i_{r+1}$},\\
(y_r - y_{r+1})e(\bi)&\text{if $i_r \leftarrow i_{r+1}$},\\
(y_{r+1} - y_{r})(y_{r}-y_{r+1}) e(\bi)\!\!\!&\text{if $i_r \rightleftarrows i_{r+1}$};
\end{array}
\right.
 \label{R4}\\
\psi_{r}\psi_{r+1} \psi_{r} e(\bi)
&=
\left\{\begin{array}{ll}
(\psi_{r+1} \psi_{r} \psi_{r+1} +1)e(\bi)&\text{if $i_{r+2}=i_r \rightarrow i_{r+1}$},\\
(\psi_{r+1} \psi_{r} \psi_{r+1} -1)e(\bi)&\text{if $i_{r+2}=i_r \leftarrow i_{r+1}$},\\
\big(\psi_{r+1} \psi_{r} \psi_{r+1} -2y_{r+1}
\\\qquad\:\quad +y_r+y_{r+2}\big)e(\bi)
\hspace{2.4mm}&\text{if $i_{r+2}=i_r \rightleftarrows i_{r+1}$},\\
\psi_{r+1} \psi_{r} \psi_{r+1} e(\bi)&\text{otherwise}.
\end{array}\right.
\label{R7}
\end{align}
}

A striking feature of the above theorem is that
the relations depend only on the quiver $\Ga$
(hence $e$) but 
do not involve the parameter $q$.

\vspace{2mm} 

\noindent
{\bf Corollary 1.}
{\em There is a unique $\Z$-grading on
$H^\La_\alpha$ such that
$e(\bi)$ is of degree 0,
$y_r$ is of degree $2$, and
$\psi_r e(\bi)$ is of degree $-a_{i_r,i_{r+1}}$
for each $r$ and $\bi \in I^\alpha$.}

\vspace{2mm}

\noindent
{\bf Corollary 2.}
{\em
Suppose that $F$ is of characteristic zero.
Then the algebra
$H^\La_d$ for $q$ not a root of unity
is isomorphic to the
algebra $H^\La_d$ for $q=1$.
In other words, the
  cyclotomic Hecke algebra for generic $q$
is isomorphic to its rational degeneration.
}

\vspace{2mm}

The presentation for the algebra $H^\La_\alpha$ in our Main Theorem
is a sign-modified version of the presentation for the cyclotomic
Khovanov-Lauda algebra associated to the quiver $\Ga$ and 
the weight $\La$ as defined in \cite[$\S$3.4]{KL1};
this sign-modified version was introduced already in \cite{KL2}
(except in the case $e=2$).
Thus the theorem shows that
blocks of cyclotomic Hecke algebras
are cyclotomic Khovanov-Lauda algebras.

If $\La$ is of level one, $H_d^\La$ is isomorphic to
the 
group algebra $F S_d$ of the symmetric group if $q=1$ or
the associated Iwahori-Hecke algebra for arbitrary $q$.
In these cases, 
our Main Theorem for $e=0$
is essentially Young's semi-normal
form (see $\S$5), while for $e > 0$
Corollary 1 yields interesting 
$\Z$-gradings on
blocks of symmetric groups and the associated Iwahori-Hecke algebras.
The existence of such gradings was predicted already 
by Rouquier \cite[Remark 3.11]{R1} and Turner \cite{T}.
This means that it is now possible to study
{\em graded} representation theory of these algebras.
First steps in this direction are taken in \cite{BKW},
in which we construct graded versions of Specht modules.

Corollary 2 can be viewed as an extension of Lusztig's
results from \cite{Lu} in type A.
As an application, it is easy to see 
that Ariki's categorification theorem from
\cite{Ariki} when $q$ is not a root of unity
is simply equivalent to the degenerate analogue proved (in a quite
different way) in
 \cite{BKcat}.
Our Main Theorem also makes possible the comparison of
blocks of   cyclotomic Hecke algebras
at a primitive complex $p$th root of unity 
with corresponding blocks of degenerate cyclotomic 
Hecke algebras over fields of 
characteristic $p$; see $\S$6 for further discussion.

Before we started work on this article (and before \cite{KL1}
became available),
the first author jointly with Stroppel
made calculations as part of \cite{BS3}
that are equivalent to the Main Theorem for $\La$ of level two,
$q=1$ and $e=0$.
In that case, the algebras $H^\La_\alpha$
are Morita equivalent 
to (slightly generalised versions of)
Khovanov's diagram algebra.
This connection is exploited further
in \cite{BS3} to give an elementary proof of the Khovanov-Lauda
categorification conjecture for level two weights in 
type $\text{A}_\infty$.

Since completing this work, we have learnt from
Rouquier that he has independently discovered essentially the same 
family of algebras as Khovanov and Lauda at the affine level;
he refers to them as
{\em quiver nil-Hecke algebras}.
In particular, in \cite[$\S$3.2.6]{R2}, Rouquier
has proved an analogue of our Main Theorem for the
affine algebras (suitably localized).

The rest of the article is taken up with the proof of the Main Theorem.
The strategy is clear: let $R^\La_\alpha$ be the algebra defined by generators
and relations as in the Main Theorem. Then we need to 
construct mutually inverse
homomorphisms 
$\RtoH:R^\La_\alpha \rightarrow H^\La_\alpha$ and
$\HtoR:H^\La_\alpha \rightarrow R^\La_\alpha$
by explicitly checking relations in both directions.
However, there are many subtle differences in the two cases 
$q \neq 1$ and $q=1$, 
so we carry out the two sets of calculations independently, 
treating the easier degenerate case in
$\S$3 then the non-degenerate case in $\S$4.
In $\S$2 we make a few definitions that are common to both cases.
Finally in $\S$5 and $\S$6 we discuss briefly the relation to 
Young's semi-normal form and make some observations about base change.

\vspace{2mm}
\noindent
{\em Acknowledgements.} We thank Anton Cox for pointing out a gap
in an earlier version, and Meinolf Geck for drawing our 
attention to the fact that our results could be used to prove
a conjecture of Mathas.

\section{Preliminaries}

\subsection{Divided difference operators}\label{SSBN}
The symmetric group $S_d$ 
acts on the left 
on the polynomial ring $F[y_1,\dots,y_d]$ and on the ring of power series 
$F[[y_1,\dots,y_d]]$ by permuting variables; we often
denote $w{\cdot}f$ by ${^{w\!}}f$ for $w\in S_d$ and $f\in F[[y_1,\dots,y_d]]$. 
For any $f\in F[y_1,\dots,y_d]$
and $1 \leq r < d$, 
the {\em divided difference} operator $\partial_r$ is defined by
\begin{equation}\label{EPartial}
\partial _r(f):=\frac{{^{s_r\!}}f -f}{y_r-y_{r+1}}
\end{equation}
It extends to $F[[y_1,\dots,y_d]]$ by continuity. 
We will need the {\em product rule} for
divided difference operators: for $f,g\in F[[y_1,\dots,y_d]]$ we have that
\begin{align}\label{EPR}
\partial_r(fg)&=\partial_r(f)g+{^{s_r\!}}f\partial_r(g) 
= \partial_r(f) {^{s_r\!}} g + f \partial_r(g).
\end{align}
Note as a matter of notation here that ${^{s_r\!}} fg$ means
$({^{s_r\!}}f) g$ not ${^{s_r\!}}(fg)$.

We will often be given an $F$-algebra $A$
and commuting nilpotent elements $y_1,\dots,y_d \in A$.
There is then an algebra homomorphism
\begin{equation}\label{conv}
F[[y_1,\dots,y_d]] \rightarrow A
\end{equation}
mapping each $y_r$ to the element of $A$ with the same name.
Given also power series $f, g \in F[[y_1,\dots,y_d]]$,
this homomorphism allows us to interpret
expressions like
${^{s_r\!}}f$, $\partial_r(g)$
and $f / g$
as elements of $A$; we mean the element of $A$ obtained
by first evaluating the given expression as a power series 
in $F[[y_1,\dots,y_d]]$
and only then taking the image of the result under the
homomorphism (\ref{conv}).
We stress that $f$ and $g$ must be given specifically
as power series (not merely as elements of $A$)
for such expressions to make sense.

\subsection{Cyclotomic Khovanov-Lauda algebras}\label{sckl}
For the remainder of the article, we fix notation exactly
as in the statement of the Main Theorem.
In particular, $\Ga$ is the quiver with vertex set $I = \Z / e\Z$
defined at the beginning of the introduction and $P, Q$ are 
as in (\ref{pd}).
The (sign-modified) {\em cyclotomic
Khovanov-Lauda algebra} of type $\Ga$
associated to $\La \in P_+$ of level $\ell$
and $\alpha \in Q_+$ of height $d$
is the
$F$-algebra $R^\La_\alpha$  defined by the generators
\begin{equation}
\label{kl0}
\{e(\bi)\:|\: 
\bi\in I^\al\}\cup\{y_1,\dots,y_{d}\}\cup\{\psi_1, \dots,\psi_{d-1}\}
\end{equation}
subject to the relations (\ref{ERCyc})--(\ref{R7}) above. 

In view of the first lemma below,
we are
in the situation of (\ref{conv}), so have a homomorphism
\begin{equation}\label{Rhom}
F[[y_1,\dots,y_d]] \rightarrow R^\La_\alpha
\end{equation} 
mapping $y_1,\dots,y_d$
to the elements of $R^\La_\alpha$ with the same name.
By (\ref{R3YPsi}), (\ref{R6}) and (\ref{R5}),
the following useful identity holds in $R^\La_\alpha$:
\begin{equation}
\label{EDDR}
f\psi_r e(\bi)=
\left\{\begin{array}{ll}
\psi_r{^{s_r\!}} fe(\bi)+\partial_r(f)e(\bi)&\text{if $i_r = i_{r+1}$},\\
\psi_r{^{s_r\!}} fe(\bi)&\text{otherwise},
\end{array}\right.
\end{equation}
for any $1\leq r<d$, $\bi \in I^\alpha$ and $f\in F[[y_1,\dots,y_d]]$.

\begin{Lemma}\label{LNilY}
The elements $y_r\in R_\al^\La$ are nilpotent for all $1\leq r\leq d$. 
\end{Lemma}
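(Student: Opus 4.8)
The plan is to bound each $y_r$ by first handling the cyclotomic relation at position~$1$ and then propagating nilpotency along the string of $\psi$'s. The relation (\ref{ERCyc}) gives $y_1^{(\La,\al_{i_1})}e(\bi)=0$ for every $\bi\in I^\al$, and summing over $\bi$ (using $\sum_{\bi\in I^\al}e(\bi)=1$ from (\ref{R1})) shows $y_1^N=0$ where $N:=\max_{\bi\in I^\al}(\La,\al_{i_1})$; in particular $y_1$ is nilpotent. The heart of the argument is then an induction on $r$: assuming $y_r$ is nilpotent, I would show $y_{r+1}$ is nilpotent by expressing a suitable power of $y_{r+1}e(\bi)$ in terms of expressions involving $y_r$ and $\psi_r$.

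The key computational input is the commutation relation (\ref{R6}) together with (\ref{R4}). From (\ref{R6}) one reads off $y_{r+1}\psi_r e(\bi) = \psi_r y_{r+1} e(\bi)$ up to lower-order corrections, but more useful is to combine (\ref{R6}) applied twice (or, equivalently, the identity (\ref{EDDR}) with $f$ a power of $y_{r+1}$): for any power series $f$, $f\psi_r e(\bi)=\psi_r\,{}^{s_r\!}f\,e(\bi)+\partial_r(f)e(\bi)$ if $i_r=i_{r+1}$ and $f\psi_r e(\bi)=\psi_r\,{}^{s_r\!}f\,e(\bi)$ otherwise. Now $\psi_r^2 e(\bi)$ is, by (\ref{R4}), a polynomial in $y_r,y_{r+1}$ (one of $0$, $e(\bi)$, $\pm(y_{r+1}-y_r)e(\bi)$, or $-(y_{r+1}-y_r)^2e(\bi)$). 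The strategy is: take $M$ large enough that $y_r^M=0$, and consider $\psi_r\, y_{r+1}^{\,k}\,\psi_r e(\bi)$ for large $k$. Pushing the right-hand $y_{r+1}^{\,k}$ through the left $\psi_r$ via (\ref{EDDR}) turns it into $\psi_r^2\,({}^{s_r}y_{r+1}^{\,k})e(\bi) + (\text{lower order in }\psi_r)$, i.e.\ into $\psi_r^2 y_r^{\,k}e(\bi)$ plus terms that are polynomials in $y_r,y_{r+1}$ of degree $<k$ in $y_{r+1}$; choosing $k\geq M$ kills the $\psi_r^2 y_r^k e(\bi)$ term and gives a recursion expressing $y_{r+1}^{\,k}e(\bi)$ (up to a unit, coming from the leading term of $\psi_r^2$ together with (\ref{R4})) as a polynomial in $y_{r+1}$ of strictly smaller $y_{r+1}$-degree with coefficients in $F[y_r]$. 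Iterating, $y_{r+1}^{\,n}e(\bi)$ becomes a polynomial in $y_r$, hence is killed by raising to a further power since $y_r$ is nilpotent. One has to treat the cases of (\ref{R4}) separately: when $\psi_r^2 e(\bi)=0$ one must instead use (\ref{R5})/(\ref{R6}) directly to slide $y_{r+1}$ to $y_r$ at the cost of a bounded error, and when $\psi_r^2 e(\bi)$ is a nonzero scalar $e(\bi)$ the argument is cleanest; the quiver-edge cases give genuine factors of $(y_{r+1}-y_r)$ that one absorbs into the induction by increasing the exponent.

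Finally, once each $y_re(\bi)$ is shown to be nilpotent for every $\bi\in I^\al$, nilpotency of $y_r$ itself follows from the orthogonal idempotent decomposition $1=\sum_{\bi\in I^\al}e(\bi)$ of (\ref{R1}), which is a finite sum, so a common power annihilates all summands.

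I expect the main obstacle to be bookkeeping the bound in the inductive step, specifically controlling the error terms produced when commuting $y_{r+1}^{\,k}$ past $\psi_r$ and keeping track of how the nilpotency exponent grows from $r$ to $r+1$ (it will blow up, but finitely, at each step, which is all that is needed). The case distinction forced by (\ref{R4}) — especially the $i_r=i_{r+1}$ case where $\psi_r^2e(\bi)=0$ and the naive "multiply by $\psi_r$ on both sides" trick degenerates — is where the argument needs the most care; there one genuinely needs relations (\ref{R5})--(\ref{R6}) rather than (\ref{R4}) to transfer nilpotency from $y_r$ to $y_{r+1}$.
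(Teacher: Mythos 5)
Your base case and the overall inductive framework match the paper. In the inductive step you have correctly isolated the two regimes, but your treatment of the case $i_r = i_{r+1}$ has a genuine gap. When $i_r = i_{r+1}$, (\ref{R4}) gives $\psi_r^2 e(\bi) = 0$, so no manipulation of the form ``sandwich a power of $y_{r+1}$ between two $\psi_r$'s and exploit the leading term of $\psi_r^2$'' can close, and (\ref{R5})/(\ref{R6}) only relate products $y_{r+1}\psi_r e(\bi)$ or $\psi_r y_{r+1} e(\bi)$ to $\psi_r y_r e(\bi)$ or $y_r\psi_r e(\bi)$: every identity you can produce by ``sliding'' still carries a dangling $\psi_r$, and there is no way to strip it off to extract information about $y_{r+1}e(\bi)$ by itself. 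Your phrase ``at the cost of a bounded error'' does not describe a step that actually exists. The paper escapes this trap with a specific trick: it introduces the element $\tau_r := \psi_r(y_r - y_{r+1}) + 1$ and verifies from (\ref{R5})--(\ref{R4}) that $\tau_r^2 e(\bi) = e(\bi)$ and $\tau_r\,y_r\,\tau_r\, e(\bi) = y_{r+1}e(\bi)$. Thus $\tau_r$ acts as an involution conjugating $y_r$ to $y_{r+1}$ (on the block $e(\bi)$), and nilpotency transfers immediately: $y_{r+1}^n e(\bi) = \tau_r y_r^n \tau_r e(\bi)$. Without this conjugating element (or something equivalent), the equal-residues case does not close.

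For $i_r \neq i_{r+1}$, the paper's argument is also more economical than yours: instead of bounding degrees of $y_{r+1}^k$ sandwiched between $\psi_r$'s, it multiplies the single identity $y_r\psi_r e(\bi) = \psi_r y_{r+1}e(\bi)$ on the left by $\psi_r$ to get $\psi_r y_r\psi_r e(\bi) = \pm(y_r - y_{r+1})^k y_{r+1} e(\bi)$ with $k\in\{0,1,2\}$ from (\ref{R4}), then reads off $y_{r+1}^{k+1}e(\bi) = y_r f\, e(\bi) \pm \psi_r y_r\psi_r e(\bi)$. Both summands are nilpotent (the first because $y_r$ is; the second because it equals a polynomial in $y_r, y_{r+1}$ times $e(\bi)$, and powers of it pick up increasing powers of $y_r$ once you slide $\psi_r^2$ factors out via (\ref{R4})) and they commute, so the sum is nilpotent. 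Your version can likely be made to work in this case after keeping careful track of how the recursion terminates, but it is strictly more bookkeeping than the paper's argument for the same conclusion. The essential missing idea in your proposal is the element $\tau_r$.
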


\begin{proof}
It suffices to prove that each $y_re(\bi)$ is nilpotent. Apply 
induction on $r$. 
The base case $r=1$ follows as $(y_1e(\bi))^{(\La,\al_{i_1})}=0$ 
in $R_\al^\La$. For the induction step, 
we assume that 
$y_r$ is nilpotent for some $1 \leq r < d$
and prove that
$y_{r+1}e(\bi)$ is too. 
We consider two cases.

If $i_r=i_{r+1}$, set  
$\tau_r:=\psi_r(y_r-y_{r+1})+1$.
One checks that  
$\tau_r^2e(\bi)=e(\bi)$ and 
$\tau_ry_r\tau_re(\bi)=y_{r+1}e(\bi)$.
Hence $y_{r+1}^n e(\bi) =
\tau_r y_r^n \tau_r e(\bi)$,
and the nilpotency of $y_{r+1}e(\bi)$
follows from that of $y_r$.

If $i_r \neq i_{r+1}$, then
multiplying the equation
$y_r \psi_r e(\bi) = \psi_r y_{r+1} e(\bi)$ on the left
by $\psi_r$ and using (\ref{R4}) gives that
$\psi_r y_r \psi_r e(\bi) = \pm (y_r - y_{r+1})^k y_{r+1} e(\bi)$
for some $k \in \{0,1,2\}$ and some choice of sign.
Hence $$
y_{r+1}^{k+1} e(\bi) = y_r f e(\bi) \pm \psi_r y_r \psi_r e(\bi)
$$
for some (possibly zero) $f \in F[y_r,y_{r+1}]$ and some sign.
As $y_r$ is nilpotent, we deduce using
(\ref{R4}) that $\psi_r y_r \psi_r e(\bi)$ is nilpotent too,
and of course $y_r f$ is nilpotent as well.
Observing finally that
$y_r f e(\bi)$ and $\psi_r y_r \psi_r e(\bi)$ commute,
this implies the nilpotency of $y_{r+1}^{k+1} e(\bi)$.
Hence $y_{r+1} e(\bi)$ is nilpotent too.
\end{proof}

\begin{Corollary}\label{fda} $R^\La_\alpha$ is a finite dimensional algebra.
\end{Corollary}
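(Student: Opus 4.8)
The plan is to deduce finite-dimensionality from the nilpotency of the $y_r$ established in Lemma~\ref{LNilY}, together with the fact that the $e(\bi)$ are finitely many orthogonal idempotents summing to $1$ and that there are only finitely many $\psi$-words to consider. First I would fix a reduced expression $s_{r_1}\cdots s_{r_k}$ for each $w\in S_d$ and set $\psi_w:=\psi_{r_1}\cdots\psi_{r_k}$ (a choice of representative, not claimed to be well-defined). Then I would argue, using the relations, that every element of $R^\La_\alpha$ is an $F$-linear combination of monomials of the form $\psi_w\, y_1^{a_1}\cdots y_d^{a_d}\, e(\bi)$ with $w\in S_d$, $a_1,\dots,a_d\ge 0$ and $\bi\in I^\alpha$: the relations (\ref{R2PsiE}) let one move all $e(\bi)$ to the right, (\ref{EDDR}) (equivalently (\ref{R3YPsi}), (\ref{R6}), (\ref{R5})) lets one move all $y$'s past any $\psi$ to the right at the cost of lower-degree-in-$\psi$ terms, and (\ref{R4}), (\ref{R7}) together with the braid-type relations let one rewrite any $\psi$-word in terms of $\psi_w$'s with $w$ of no greater length (again at the cost of terms with strictly fewer $\psi$-letters). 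So by a double induction — outer on the number of $\psi$-letters, inner on the length of the underlying permutation — every element is spanned by such monomials.

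Next, since by Lemma~\ref{LNilY} each $y_r$ is nilpotent, say $y_r^{N}e(\bi)=0$ for all $r$ and all $\bi$ for some common $N$, the exponents $a_1,\dots,a_d$ in the spanning monomials may be taken to lie in $\{0,1,\dots,N-1\}$. Since $S_d$ is finite, $I^\alpha$ is finite (it is a single $S_d$-orbit of $d$-tuples over the finite set $I$ when $e>0$, and when $e=0$ it is still finite because the entries of any $\bi\in I^\alpha$ are constrained by $\alpha$), and the choice of one $\psi_w$ per $w$ is finite, it follows that $R^\La_\alpha$ is spanned over $F$ by finitely many monomials, hence is finite dimensional.

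The main obstacle is making the straightening argument precise, i.e.\ verifying that the rewriting procedure actually terminates and genuinely reduces to the claimed normal-form monomials. The delicate points are: (i) that pushing a $y$ through a $\psi$ via (\ref{EDDR}) produces only the term $\psi_r{}^{s_r\!}f e(\bi)$ plus a $\psi$-free correction $\partial_r(f)e(\bi)$, so the induction on the number of $\psi$-letters is well-founded; and (ii) that applying (\ref{R4}) to a square $\psi_r^2$, or (\ref{R7}) to a length-reducing braid move $\psi_r\psi_{r+1}\psi_r$, always replaces a $\psi$-word by $\psi$-words that are either shorter as permutation words or have strictly fewer $\psi$-letters, so that the outer/inner double induction closes. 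None of this requires identifying a basis — only an upper bound on the spanning set — so the argument is genuinely a short corollary of the lemma; I would simply remark that combining the spanning statement with nilpotency of the $y_r$ and finiteness of $S_d$ and $I^\alpha$ immediately gives the claim.
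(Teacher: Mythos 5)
Your proof is correct and follows essentially the same route as the paper: fix a $\psi_w$ for each $w\in S_d$, use the relations to show that the monomials $\psi_w y_1^{a_1}\cdots y_d^{a_d} e(\bi)$ span (the paper delegates the straightening details to \cite[\S2.3]{KL1}, which you spell out), and then invoke Lemma~\ref{LNilY} together with finiteness of $S_d$ and $I^\alpha$ to conclude.
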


\begin{proof}
For each $w \in S_d$, fix a reduced expression
$w = s_{i_1} \cdots s_{i_n}$ and then define
$\psi_w := \psi_{i_1} \cdots \psi_{i_n}$.
By an easy application of the relations
(\ref{R1})--(\ref{R7}) one checks as in \cite[$\S$2.3]{KL1}
that $R^\La_\alpha$
is spanned by the elements
\begin{equation}\label{sset}
\{\psi_w y_1^{n_1} \cdots y_d^{n_d} e(\bi)\:|\:
w \in S_d, \bi \in I^\alpha, n_1,\dots,n_d \geq 0\}.
\end{equation}
It remains to observe by Lemma~\ref{LNilY} that all but finitely many of these elements
are zero.
\end{proof}

The following conjecture is a quite trivial 
consequence of the relations in the case $\ell = 1$, and is 
established
for $\ell = 2$ in \cite{BS3}.
We couldn't see how to check even the $\ell = 2$
case using the relations alone.

\begin{Conjecture}
If $e=0$ and $\La$ is of level $\ell$ 
then $y_r^\ell=0$ in $R_\al^\La$ for any $1\leq r\leq d$. 
\end{Conjecture}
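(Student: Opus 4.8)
The plan is to deduce the conjecture from the Main Theorem rather than from the relations alone. Once the isomorphism $\RtoH\colon R^\La_\al\iso H^\La_\al$ is in hand, it identifies $y_r$ with an element of $H^\La_\al$ acting on each weight space $e(\bi)H^\La_d$ as $x_r-i_r$ in the degenerate case (and as a scalar multiple of $q^{-i_r}X_r-1$ in the non-degenerate case), so the conjecture becomes the Hecke-theoretic assertion that
$$(x_r-i_r)^\ell\,e(\bi)=0\quad\text{in }H^\La_d\qquad(\bi\in I^\al,\ 1\leq r\leq d),$$
equivalently that the Jucys--Murphy element $x_r$ acts on the $\bi$-weight space of an arbitrary $H^\La_d$-module with nilpotency degree at most $\ell$. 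Since the defining relations of $R^\La_\al$ do not involve $q$, it is enough to establish this in the degenerate case over a field of characteristic zero, which is exactly the situation $e=0$, $q=1$.

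For that, I would work with the cellular (``Specht'') modules $S^{\underline\la}$ of the degenerate cyclotomic Hecke algebra, indexed by $\ell$-multipartitions $\underline\la$ of $d$ and each equipped with a Murphy-type basis $\{v_\T\}$ labelled by standard $\underline\la$-tableaux on which $x_r v_\T=c_r(\T)v_\T+(\text{combination of }v_{\T'}\text{ with }\T'\text{ strictly more dominant than }\T)$, where $c_r(\T)\in\Z$ is the content of the box of $\T$ containing $r$. On the $\bi$-weight space of $S^{\underline\la}$, namely the span of those $v_\T$ with residue sequence $\bi$, the operator $x_r-i_r$ is thus strictly triangular, hence nilpotent; the base case $r=1$ is precisely (\ref{ERCyc}) together with $(\La,\al_{i_1})\leq\ell$. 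The crux is the combinatorial estimate that this nilpotency degree never exceeds $\ell$: concretely, that among standard $\underline\la$-tableaux with a fixed residue sequence no dominance chain of length $>\ell$ supports a nonzero contribution of $x_r-i_r$. This is the step I expect to be the main obstacle; it is the content of \cite{BS3} in the case $\ell=2$, and for larger $\ell$ no argument using the relations alone seems available.

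It is worth saying why a relations-only proof is elusive, and where even the module-theoretic route above is not formal. One can try to propagate the bound along $1,2,\dots,d$ using the element $\tau_r:=\psi_r(y_r-y_{r+1})+1$ from the proof of Lemma~\ref{LNilY}: when $i_r=i_{r+1}$, $\tau_r$ is an involution on $e(\bi)R^\La_\al$ with $\tau_r y_r\tau_r e(\bi)=y_{r+1}e(\bi)$, so the bound passes unchanged across such ``flat'' spots. But at a step $i_r\neq i_{r+1}$ the relations (\ref{R4})--(\ref{R7}) bound the nilpotency degree of $y_{r+1}e(\bi)$ only in terms of that of $y_re(\bi)$ with a possible increase by a factor up to three, and --- more fundamentally --- the only a priori bound on $y_1e(\bi)$ is $(\La,\al_{i_1})$, which really does vary with $\bi$, whereas the conjectured bound $\ell$ is uniform; linking residue sequences with different values of $(\La,\al_{i_1})$ needs information the local relations do not obviously carry. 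The same difficulty reappears on the representation side: a nilpotency bound valid on every cell module need not descend to the algebra with the same constant, since propagating it through the Specht filtration of a projective a priori multiplies it by the number of factors rather than preserving it. Resolving the conjecture therefore seems to require a genuinely new ingredient --- either the combinatorial estimate for all $\ell$, or a global argument that bypasses filtrations --- which is why it is recorded here only as a conjecture.
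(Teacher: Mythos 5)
This statement is recorded in the paper as a \emph{Conjecture}; the authors give no proof of it, noting only that the case $\ell=1$ is a trivial consequence of the relations, that $\ell=2$ is established in \cite{BS3}, and that even the $\ell=2$ case could not be checked by them using the relations alone. So there is no in-paper argument to compare your proposal against, and you have in fact recognized this: your text is an analysis of the obstacles rather than a proof, ending with the honest admission that the question remains open.

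Your preliminary reductions are correct and worth recording. Once the Main Theorem identifies $R^\La_\al$ with $H^\La_\al$, the element $y_r e(\bi)$ is identified with $(x_r-i_r)e(\bi)$ in the degenerate case (and with $(1-q^{-i_r}X_r)e(\bi)$ in the non-degenerate case), so the conjecture is indeed equivalent to the Hecke-side assertion that $(x_r-i_r)^\ell e(\bi)=0$ in $H^\La_d$ for all $\bi\in I^\al$; and since the presentation of $R^\La_\al$ is independent of $q$, it does suffice to treat $q=1$ over a field of characteristic $0$. Your remarks about why neither route closes are also accurate: the relations by themselves only yield, as in Lemma~\ref{LNilY}, that the nilpotency degree of $y_{r+1}e(\bi)$ is controlled by that of $y_r e(\bi)$ up to a multiplicative loss at vertices $i_r\neq i_{r+1}$, with only the nonuniform bound $(\La,\al_{i_1})$ at $r=1$; and on the module-theoretic side a nilpotency bound valid on each cell module does not formally descend to the regular module through a Specht filtration. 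One additional point you might add: in the $\ell=1$ case the algebra is $FS_d$ and is semisimple (since $e=0$ forces characteristic $0$), each weight space of each Specht module is at most one-dimensional, and the regular module is a direct sum of Spechts, so the descent problem evaporates --- which explains exactly why $\ell=1$ is so easy while $\ell\geq2$ requires the extra ingredient you identify. The conjecture is genuinely open in the paper and nothing in your proposal should be expected to resolve it.
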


\section{The degenerate case}

\subsection{\boldmath Blocks of degenerate cyclotomic Hecke algebras}\label{SId}
For $d \geq 0$, let $H_d$ be the degenerate 
affine Hecke algebra, working always over the fixed field
$F$ of characteristic $e \geq 0$. So $H_d$ has generators
\begin{equation}\label{hg0}
\{x_1,\dots,x_d\}\cup\{s_1,\dots, s_{d-1}\}
\end{equation}
subject to the following relations for all admissible indices:
\begin{align}
\label{EPoly}
x_rx_s&=x_sx_r;
\\\label{EDAHA}
s_r x_{r+1} &= x_r s_r + 1,\hspace{10.5mm} s_r x_s = x_s s_r \hspace{1.5mm} 
\text{ if $s \neq r,r+1$};\\
\label{ECoxeter1}
s_r^2&=1;\\\label{ECoxeter2}
s_rs_{r+1}s_r&=s_{r+1}s_rs_{r+1},
\qquad
s_rs_t=s_ts_r\hspace{3mm} \text{if $|r-t|>1$}.
\end{align}
Given $\La\in P_+$ of level $\ell$,
let $H_d^\La$ be the cyclotomic quotient from (\ref{ECHA}).
The elements
$x_1^{m_1} \cdots x_d^{m_d} w$
for $0 \leq m_1,\dots,m_d < \ell$ and $w \in S_d$
give a basis for $H_d^\La$; 
see e.g. \cite[Theorem 7.5.6]{Kbook}. 
Hence:
\begin{equation}\label{degdim}
\dim H_d^\La = \ell^d d!.
\end{equation} 
For example, if $\ell=1$  then $H_d^\La\cong F S_d$. 

Let $M$ be a finite dimensional $H_d^\La$-module. By \cite[Lemma 7.1.2]{Kbook}, the eigenvalues of each $x_r$ on $M$ belong to 
$I \subseteq F$. So 
$M$ decomposes as the direct sum 
$M = \bigoplus_{\bi \in I^d} M_\bi$
of its {\em weight spaces} 
$$
M_\bi := \{v \in M\:|\:(x_r-i_r)^N v = 0
\text{ for all $r=1,\dots,d$ and $N \gg 0$}\}.
$$
Note also by \cite[Lemma 2.2.1]{Kbook} that
\begin{equation}\label{ESpan}
s_r(M_\bi)\subseteq M_\bi+M_{s_r\cdot\bi}
\end{equation}
for each $1 \leq r < d$.
Considering the weight space 
decomposition of the regular module, we deduce that 
there is a system 
$\{e(\bi)\:|\:\bi \in I^d\}$
of mutually orthogonal idempotents in $H_d^\La$
such that $e(\bi) M = M_\bi$ for each finite dimensional module $M$. 
In fact, each $e(\bi)$
lies in the commutative subalgebra
generated by $x_1,\dots,x_d$. 
All but finitely many of the $e(\bi)$'s are zero,
and their sum is the identity element in $H_d^\La$. 

By \cite[Theorem 1]{cyclo}, the center $Z(H_d^\La)$ 
consists of all symmetric polynomials in $x_1,\dots,x_d$. So,
given also $\alpha \in Q_+$ of height $d$, the
idempotent $e_{\alpha}$ from (\ref{bal})
is either zero or it is a primitive central idempotent
in $H_d^\La$.
This means that the algebra $H_\alpha^\La := e_{\alpha} H_d^\La$ 
from (\ref{fe})
is either zero or it is a block of  $H_d^\La$.
The subalgebra of $H^\La_\al$ generated by (the images of) $x_1,\dots,x_d$ will be denoted 
$\Pol_\al^\La$. 
Note $\Pol_\al^\La e(\bi)$ is an 
algebra with identity element
$e(\bi)$. If $x\in \Pol^\La_\al$ is such that $xe(\bi)$ is a unit in 
$\Pol_\al^\La e(\bi)$, we write $x^{-1}e(\bi)$ for its inverse in 
$\Pol_\al^\La e(\bi)$ (interpreted as $0$ if $e(\bi) = 0$).
For example, set 
\begin{equation}
x_{r,s}:=x_r-x_s,
\end{equation}
and let $\bi\in I^\al$ be 
such that $i_r\neq i_s$. 
Then $x_{r,s}^{-1}e(\bi)$ makes sense. 

\subsection{\boldmath Intertwining elements $\phi_r$} 
We now introduce certain remarkable elements of $H_\al^\La$ called {\em intertwining elements}: let
\begin{equation}
\phi_r:=s_r+\sum_{\substack{\bi\in I^\al\\i_r\neq i_{r+1}}}x_{r,r+1}^{-1}e(\bi)
+\sum_{\substack{\bi\in I^\al\\i_r= i_{r+1}}}e(\bi)
\end{equation}
for $1 \leq r < d$.
This is a slightly modified version of the usual intertwining element
as in \cite[\S2]{Ro} or \cite[(3.19)]{Kbook}:
\begin{equation}\label{ERelation}
\theta_r:=s_rx_{r,r+1}+1.
\end{equation}
The elements $\theta_r$ have the following nice properties 
\cite[\S3.8]{Kbook} (cf. \cite[Proposition 5.2]{Lu}):
\begin{align}
\label{EOldI4}
\theta_r^2&=
1-x_{r,r+1}^{2};
\\
\label{EOldI1}
\theta_rx_{r+1}&=
x_r\theta_r,\ \
x_{r+1}\theta_r=\theta_rx_r,\ \
\theta_rx_s=x_s\theta_r\  \text{if $s\neq r,r+1$};\\
\label{EOldI6}
\theta_r\theta_{r+1}\theta_r&=\theta_{r+1}\theta_r\theta_{r+1},
\hspace{20mm}\theta_r\theta_s=\theta_s\theta_r\:\, \text{if $|r-s|>1$}.
\end{align}
The elements $\phi_r$ inherit similar properties:

\begin{Lemma}\label{int}
The intertwining elements satisfy the following relations
for all $\bi \in I^\alpha$ and admissible $r, s$:
\begin{align}
\label{EI0}
\phi_re(\bi)&=e(s_r{\cdot}\bi)\phi_r;\\
\label{EI1}
\phi_rx_s&=x_s\phi_r\hspace{49.8mm}\text{if $s\neq r,r+1$};\\
\label{EI5}
\phi_r\phi_s&=\phi_s\phi_r\hspace{49.8mm}\text{if $|r-s|>1$};\\
\label{EI2}
\phi_rx_{r+1}e(\bi)&=\begin{cases}
x_r\phi_re(\bi) & \text{if $i_r\neq i_{r+1}$},\\
(x_r\phi_r+1-x_{r,r+1})e(\bi) \hspace{12.6mm}&\text{if $i_r= i_{r+1}$};
\end{cases}\\
\label{EI3}
\hspace{5.5mm}x_{r+1}\phi_re(\bi)&=\begin{cases}
\phi_rx_re(\bi) & \text{if $i_r\neq i_{r+1}$},\\
(\phi_rx_r+1-x_{r,r+1})e(\bi)\hspace{12.6mm} &\text{if $i_r= i_{r+1}$};
\end{cases}\\
\label{EI4}
\phi_r^2e(\bi)&=\begin{cases}
(1-x_{r,r+1}^{-2})e(\bi) & \text{if $i_r\neq i_{r+1}$},\\
2\phi_re(\bi) \hspace{38.5mm}&\text{if $i_r= i_{r+1}$};
\end{cases}\\
\label{EI6}
\phi_{r}\phi_{r+1}\phi_{r}e(\bi)
&=\begin{cases}
(\phi_{r+1}\phi_{r}\phi_{r+1}
+\phi_{r}-\phi_{r+1})e(\bi) & \text{if $i_r=i_{r+2}=i_{r+1}$},\\
(\phi_{r+1}\phi_{r}\phi_{r+1}+z_r)
e(\bi)
& \text{if $i_r=i_{r+2}\neq i_{r+1}$},\\
\phi_{r+1}\phi_{r}\phi_{r+1}e(\bi) & \text{otherwise},
\end{cases}
\end{align}
where
$z_r$ denotes
$(x_{r,r+1}^{-1} - x_{r+1,r+2}^{-1})
(x_{r,r+1}^{-1} x_{r+1,r+2}^{-1} - x_{r,r+1}^{-1} - x_{r+1,r+2}^{-1})$.
\end{Lemma}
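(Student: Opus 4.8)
The plan is to derive the relations for $\phi_r$ from the known relations (\ref{EOldI4})--(\ref{EOldI6}) for the classical intertwining elements $\theta_r$, using the fact that $\phi_r$ differs from $\theta_r$ only by invertible (or scalar) corrections on each weight space. Concretely, on the weight space $e(\bi)$ with $i_r \neq i_{r+1}$ we have $\phi_r e(\bi) = \theta_r x_{r,r+1}^{-1} e(\bi) + x_{r,r+1}^{-1} e(\bi)$; rewriting, $\phi_r e(\bi) = (\theta_r + 1) x_{r,r+1}^{-1} e(\bi)$, and since $\theta_r + 1 = s_r x_{r,r+1} + 2$, or more usefully $\theta_r x_{r,r+1}^{-1} = s_r + x_{r,r+1}^{-1}$ directly from (\ref{ERelation}), one sees $\phi_r e(\bi) = (s_r + x_{r,r+1}^{-1} + x_{r,r+1}^{-1})e(\bi)$... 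I would first record the cleaner formula $\phi_r e(\bi) = (s_r + x_{r,r+1}^{-1})e(\bi)$ when $i_r \neq i_{r+1}$ and $\phi_r e(\bi) = (s_r + 1)e(\bi)$ when $i_r = i_{r+1}$, then work from those. (The point is that $\phi_r = \theta_r x_{r,r+1}^{-1} P + (s_r+1)Q$ where $P = \sum_{i_r \neq i_{r+1}} e(\bi)$ and $Q = \sum_{i_r = i_{r+1}} e(\bi)$ are complementary idempotents commuting appropriately with everything in sight.)

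With those formulas in hand, (\ref{EI0}) is immediate from (\ref{ESpan}) and the fact that $s_r e(\bi) = e(s_r\cdot\bi)s_r$ together with $x_{r,r+1}^{-1}$ commuting past to give $x_{s_r\cdot\bi}^{\cdots}$; (\ref{EI1}) follows from $s_r x_s = x_s s_r$ for $s \neq r, r+1$ and the fact that $x_{r,r+1}^{-1}e(\bi)$ involves only $x_r, x_{r+1}$. For (\ref{EI2}) and (\ref{EI3}) I would multiply out $\phi_r x_{r+1} e(\bi)$ using $s_r x_{r+1} = x_r s_r + 1$ and the relation $x_{r,r+1}^{-1} x_{r+1} = x_r x_{r,r+1}^{-1} + 1$ (which holds since $x_{r+1} = x_r - x_{r,r+1}$, so $x_{r,r+1}^{-1}x_{r+1} = x_{r,r+1}^{-1}x_r - 1 = x_r x_{r,r+1}^{-1} - 1$... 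I'd be careful here with signs and whether it should be $x_r x_{r,r+1}^{-1}+1$), collecting terms to match the stated right-hand side; the $i_r = i_{r+1}$ case uses $(s_r+1)x_{r+1} = x_r s_r + 1 + x_{r+1} = x_r(s_r+1) + 1 - x_{r,r+1}$. For (\ref{EI4}), in the case $i_r \neq i_{r+1}$ I would compute $(s_r + x_{r,r+1}^{-1})^2 e(\bi) = (s_r^2 + s_r x_{r,r+1}^{-1} + x_{r,r+1}^{-1}s_r + x_{r,r+1}^{-2})e(\bi)$ and use $s_r x_{r,r+1}^{-1} = ({}^{s_r}x_{r,r+1}^{-1})s_r = -x_{r,r+1}^{-1}s_r$ to cancel the cross terms against $1 + x_{r,r+1}^{-2}$, landing on $1 - x_{r,r+1}^{-2}$; the case $i_r = i_{r+1}$ gives $(s_r+1)^2 = s_r^2 + 2s_r + 1 = 2s_r + 2 = 2(s_r+1)$, using $s_r^2 = 1$.

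The main obstacle will be the braid relation (\ref{EI6}): verifying $\phi_r \phi_{r+1} \phi_r e(\bi) = \phi_{r+1}\phi_r\phi_{r+1}e(\bi) + (\text{correction})$ requires tracking which of the seven possible ``shapes'' of the triple $(i_r, i_{r+1}, i_{r+2})$ occurs, and in each shape applying (\ref{EOldI6}) for $\theta$ together with the various rational-function identities in $x_r, x_{r+1}, x_{r+2}$. The cleanest route is probably to express everything in terms of the $\theta$'s: since $\phi_r e(\bi)$ equals $\theta_r$ times an invertible rational function (or $\theta_r + 1$ times one) depending on the weight, and since by (\ref{EI0}) conjugating through the $\phi$'s permutes the relevant weight labels in a controlled way, the braid relation for $\phi$ should follow from the braid relation (\ref{EOldI6}) for $\theta$ after bookkeeping of the scalar/rational correction factors — the ``error terms'' $\phi_r - \phi_{r+1}$ and $z_r$ being exactly what is left over. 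I expect the $i_r = i_{r+2} \neq i_{r+1}$ case producing $z_r$ to be the most delicate: here one must carefully expand $(s_r + x_{r,r+1}^{-1})(s_{r+1}+1)(s_r+x_{r,r+1}^{-1})e(\bi)$ minus the reversed product, use $s_r s_{r+1} s_r = s_{r+1} s_r s_{r+1}$ and the action of $s_r, s_{r+1}$ on $x_{r,r+1}^{-1}, x_{r+1,r+2}^{-1}$, and verify by a direct (if tedious) computation that the difference collapses to $z_r e(\bi)$ with $z_r = (x_{r,r+1}^{-1} - x_{r+1,r+2}^{-1})(x_{r,r+1}^{-1} x_{r+1,r+2}^{-1} - x_{r,r+1}^{-1} - x_{r+1,r+2}^{-1})$. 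Everything else is routine manipulation in the ring $\Pol_\al^\La e(\bi)[s_r, s_{r+1}]$.
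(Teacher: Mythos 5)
Your overall strategy -- rewrite $\phi_r$ on each weight space as $(s_r + x_{r,r+1}^{-1})e(\bi)$ or $(s_r+1)e(\bi)$ (equivalently $\theta_r x_{r,r+1}^{-1}e(\bi)$ in the first case), then derive everything from the known properties of $\theta_r$ by explicit manipulation -- is exactly the route the paper takes. Most of the sketched computations match the paper's, and the vagueness on (\ref{EI6}) is defensible since the paper's own verification of that relation is a long direct expansion in five cases.

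There is, however, a genuine gap in your argument for (\ref{EI4}). You invoke the rule
$$s_r\, x_{r,r+1}^{-1} \;=\; \bigl({}^{s_r}x_{r,r+1}^{-1}\bigr)s_r \;=\; -\,x_{r,r+1}^{-1}s_r,$$
but this fails in the degenerate affine Hecke algebra. From $s_rx_{r+1}=x_rs_r+1$ and $s_rx_r = x_{r+1}s_r-1$ one gets $s_r\,x_{r,r+1} = -x_{r,r+1}s_r - 2$, hence
$$s_r\,x_{r,r+1}^{-1} \;=\; -\,x_{r,r+1}^{-1}s_r \;-\; 2\,x_{r,r+1}^{-2}.$$
Without the extra $-2x_{r,r+1}^{-2}$ the cross terms in $(s_r + x_{r,r+1}^{-1})^2$ would vanish and you would be left with $1 + x_{r,r+1}^{-2}$, which does not equal the required $1 - x_{r,r+1}^{-2}$ -- indeed your own write-up already contains this arithmetical contradiction. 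With the corrected rule the cross terms contribute $-2x_{r,r+1}^{-2}$ and the computation does yield $1-x_{r,r+1}^{-2}$. The clean anticommutation $\theta_r x_{r,r+1}^{-1} = -x_{r,r+1}^{-1}\theta_r$ holds only for the Lusztig-style element $\theta_r$, not for $s_r$, precisely because $\theta_r$ was engineered to absorb those correction terms; this is why the paper works with $\theta_r$ at this step rather than $s_r$. The same confusion shows up in your discussion of (\ref{EI2}): the correct identity is $x_{r+1}x_{r,r+1}^{-1}=x_rx_{r,r+1}^{-1}-1$ (a purely commutative fact since all the $x$'s commute), not $x_rx_{r,r+1}^{-1}+1$; your hedge ``I'd be careful here with signs'' is warranted and should be resolved before the proof can stand.
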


\begin{proof}
To see (\ref{EI0}), it suffices to prove that its left hand side 
and right hand side
act in the same way on $M_\bj$ for
any finite dimensional $H_\al^\La$-module $M$
and $\bj \in I^\alpha$.
If $j_r = j_{r+1}$ then $\phi_r M_\bj \subseteq M_{s_r{\cdot}\bj}$
by (\ref{ESpan}).
If $j_r \neq j_{r+1}$ then $\phi_r e(\bj) = \theta_r x_{r,r+1}^{-1} e(\bj)$,
hence $\phi_r M_\bj = \theta_r M_\bj$
which is contained in $M_{s_r{\cdot}\bj}$ by (\ref{EOldI1}).
Hence in any case $e(s_r {\cdot} \bi) \phi_r$ maps
$M_\bj$ to zero unless $\bj = \bi$, and it maps
$v \in M_\bi$ to $\phi_r v$.
This is the same as the action of $\phi_r e(\bi)$, hence (\ref{EI0}) is checked.
The properties (\ref{EI1}) and (\ref{EI5}) are clear, and
the properties (\ref{EI2}) and (\ref{EI3}) come easily from (\ref{EOldI1}). 
For (\ref{EI4}), if $i_r=i_{r+1}$ then we have that
$\phi_r^2e(\bi)=(s_r+1)^2e(\bi)=(2+2s_r)e(\bi)=2\phi_re(\bi)$.
Now suppose that $i_r\neq i_{r+1}$. Then, using (\ref{EOldI1}) and 
(\ref{EOldI4}), we have 
$$\phi_r^2e(\bi)
=\theta_rx_{r,r+1}^{-1}\theta_rx_{r,r+1}^{-1}e(\bi)
=
-\theta_r^2x_{r,r+1}^{-2}e(\bi)
=(1-x_{r,r+1}^{-2})e(\bi).
$$
It remains to check (\ref{EI6}). For this, 
let us stop writing $e(\bi)$ on the right of 
all expressions (but of course remember it is there). 
Assume without loss of generality that $r=1$ and denote $i:=i_1, j:=i_2, k:=i_3$.
We consider five cases:

\subsubsection*{Case 1: $i,j,k$ all distinct} Using (\ref{EOldI1}), 
(\ref{EOldI6}) and (\ref{EI0}), we have:
\begin{align*}
\phi_2\phi_1\phi_2&=\theta_2x_{2,3}^{-1}\theta_1x_{1,2}^{-1}\theta_2x_{2,3}^{-1}
=\theta_2\theta_1\theta_2x_{1,2}^{-1}x_{1,3}^{-1}x_{2,3}^{-1}
\\&=\theta_1\theta_2\theta_1x_{2,3}^{-1}x_{1,3}^{-1}x_{1,2}^{-1}
=\theta_1x_{1,2}^{-1}\theta_2x_{2,3}^{-1}\theta_1x_{1,2}^{-1}=\phi_1\phi_2\phi_1.
\end{align*}
 
\subsubsection*{Case 2: $i=j\neq k$} Using (\ref{EI0}), we see that $\phi_2\phi_1\phi_2=\phi_1\phi_2\phi_1$ is equivalent to $(s_2+1)\phi_1\phi_2=\phi_1\phi_2(s_1+1)$, or $s_2\phi_1\phi_2=\phi_1\phi_2s_1$. Also the
relations in $H^\La_\al$ give that
$x_{1,3}^{-1} s_1 = s_1 x_{2,3}^{-1} + x_{2,3}^{-1} x_{1,3}^{-1}$
and $x_{2,3}^{-1} s_1 = s_1 x_{1,3}^{-1} - x_{2,3}^{-1} x_{1,3}^{-1}$
(remembering the idempotent $e(\bi)$ implicitly appears 
on the right so all the inverses here make sense).
Now apply (\ref{EOldI1}), (\ref{EOldI6}) and these two
relations to commute all $x$'s to the right and show that both sides are equal to $s_2s_1s_2+s_2s_1x_{2,3}^{-1}+s_2x_{2,3}^{-1}x_{1,3}^{-1}+x_{1,3}^{-1}$.
 
\subsubsection*{Case 3: $i\neq j= k$} This case is similar to Case 2.
 
\subsubsection*{Case 4: $i=k\neq j$} Using (\ref{EOldI1}), (\ref{EOldI6}) 
and (\ref{EI0}), we get
  \begin{align*}
\phi_2\phi_1\phi_2&=\theta_2x_{2,3}^{-1}(s_1+1)\theta_2x_{2,3}^{-1}
=\theta_2x_{2,3}^{-1}s_1\theta_2x_{2,3}^{-1}-\theta_2^2x_{2,3}^{-2}
\\&=\theta_2(s_1x_{1,3}^{-1}-x_{2,3}^{-1}x_{1,3}^{-1})\theta_2x_{2,3}^{-1}-(1-x_{2,3}^2)x_{2,3}^{-2}
\\&=\theta_2s_1\theta_2x_{1,2}^{-1}x_{2,3}^{-1}+\theta_2^2x_{2,3}^{-2}x_{1,2}^{-1}
-x_{2,3}^{-2}+1
\\&=(s_2x_{2,3}+1)s_1\theta_2x_{1,2}^{-1}x_{2,3}^{-1}+(1-x_{2,3}^2)x_{2,3}^{-2}x_{1,2}^{-1}
-x_{2,3}^{-2}+1\hspace{9.6mm}\\
&=s_2(s_1x_{1,3}+1)\theta_2x_{1,2}^{-1}x_{2,3}^{-1}+s_1\theta_2x_{1,2}^{-1}x_{2,3}^{-1}
+(x_{2,3}^{-2}-1)(x_{1,2}^{-1}-1)\\
\phantom{\phi_2\phi_1\phi_2}&=s_2s_1\theta_2x_{2,3}^{-1}+s_2\theta_2x_{1,2}^{-1}x_{2,3}^{-1}
+s_1\theta_2x_{1,2}^{-1}x_{2,3}^{-1}
+(x_{2,3}^{-2}-1)(x_{1,2}^{-1}-1)\!\!\!
\\&=s_2s_1(s_2x_{2,3}+1)x_{2,3}^{-1}+s_2(s_2x_{2,3}+1)x_{1,2}^{-1}x_{2,3}^{-1}
\\&\qquad+s_1(s_2x_{2,3}+1)x_{1,2}^{-1}x_{2,3}^{-1}
+(x_{2,3}^{-2}-1)(x_{1,2}^{-1}-1)
\\&=s_2s_1s_2+s_2s_1x_{2,3}^{-1}
+s_1s_2x_{1,2}^{-1}
+s_2x_{1,2}^{-1}x_{2,3}^{-1}
+s_1x_{1,2}^{-1}x_{2,3}^{-1}
\\&\qquad+x_{1,2}^{-1}+(x_{2,3}^{-2}-1)(x_{1,2}^{-1}-1).\\\intertext{Similarly, we have}
\phi_1\phi_2\phi_1&=
s_1s_2s_1
+s_1s_2x_{1,2}^{-1}
+s_2s_1x_{2,3}^{-1}
+s_1x_{2,3}^{-1}x_{1,2}^{-1}
+s_2x_{2,3}^{-1}x_{1,2}^{-1}
\\&\qquad+x_{2,3}^{-1}+(x_{1,2}^{-2}-1)(x_{2,3}^{-1}-1),
\end{align*}
and (\ref{EI6}) now follows. 

\subsubsection*{Case 5: $i= j= k$} This case follows since
$(s_1+1)(s_2+1)(s_1+1)+s_2=(s_2+1)(s_1+1)(s_2+1)+s_1$.  
\end{proof}

\subsection{\boldmath Khovanov-Lauda generators of $H^\La_\al$ in the degenerate case}\label{SKLGens}
For each $r=1,\dots,d$,
the elements
\begin{equation}\label{jon}
y_r :=
\sum_{\bi \in I^\al} (x_r-i_r) e(\bi)
\end{equation}
are nilpotent elements of the commutative algebra
$\Pol_\al^\La$.
So we are in the situation of (\ref{conv})
and get a homomorphism $F[[y_1,\dots,y_d]] \rightarrow \Pol_\al^\La$
mapping each $y_r \in F[[y_1,\dots,y_d]]$ 
to the element (\ref{jon}).
We are often going to abuse notation by using the same symbol 
for a power series $f \in F[[y_1,\dots,y_d]]$ and 
for its image in $\Pol^\La_\al$ under this homomorphism.

For $1\leq r<d$ and $\bi\in I^\al$, we define power series
$p_r(\bi) \in F[[y_r,y_{r+1}]]$ by setting
\begin{equation}
\label{EP}
p_r(\bi):=\begin{cases}
1 & \text{if $i_r=i_{r+1}$},\\
(i_{r}-i_{r+1}+y_{r}-y_{r+1})^{-1} & \text{if $i_r\neq i_{r+1}$}.
\end{cases}
\end{equation}
The following facts are easy to check:
\begin{align}
{^{s_r}}p_r(s_r{\cdot}\bi)&=-p_r(\bi)\hspace{19.6mm}\text{if}\ i_r\neq i_{r+1};\label{EP1}
\\
{^{s_r}}p_{r+1}(s_r{\cdot}\bi)&={^{s_{r+1}}}p_r(s_{r+1}{\cdot}\bi)\qquad\text{for any}\ \bi.\label{EP3}
\end{align}
Note also 
for all $1\leq r<d$ and $\bi\in I^\al$ that
\begin{align}\label{EPE}
p_r(\bi)e(\bi)&=\begin{cases}
e(\bi) & \text{if $i_r=i_{r+1}$},\\
x_{r,r+1}^{-1}e(\bi) & \text{if $i_r\neq i_{r+1}$};
\end{cases}
\\
\label{EPPhi}
\phi_r&=\sum_{\bi\in I^\al}(s_r+p_r(\bi))e(\bi).  
\end{align}

Also make an arbitrary but henceforth fixed
choice of (invertible) elements
$q_r(\bi) \in F[[y_r,y_{r+1}]]$ 
with the following properties:
\begin{align}\label{EQProp1}
q_r(\bi) &=
1+y_{r+1}-y_r\hspace{26.5mm}\text{if $i_r = i_{r+1}$};\\
q_r(\bi){^{s_r\!}}q_r(s_r{\cdot}\bi) &=
\left\{
\begin{array}{ll}
1-p_r(\bi)^2&\text{if $i_r \nslash i_{r+1}$},\\
(1-p_r(\bi)^2)/(y_{r+1}-y_r)&\text{if $i_r \rightarrow i_{r+1}$},\\
(1-p_r(\bi)^2)/(y_{r}-y_{r+1})&\text{if $i_r \leftarrow i_{r+1}$},\\
\frac{1-p_r(\bi)^2}{(y_{r+1}-y_r)(y_{r}-y_{r+1})}\hspace{5.5mm}&\text{if $i_r \rightleftarrows 
i_{r+1}$};
\end{array}\right.
\label{EQProp4}\\
{^{s_r\!}}q_{r+1}(s_{r+1}s_r{\cdot}\bi)&={^{s_{r+1}\!}}q_r(s_r s_{r+1}{\cdot}\bi) \hspace{23.2mm}\text{for any $\bi$}.\label{EQProp2}
\end{align}
Note in the fractions on the right hand side of 
(\ref{EQProp4}) that the numerator is divisible by the denominator
in $F[[y_r,y_{r+1}]]$,
so this makes sense.
Moreover, it is always possible to choose such power series $q_r(\bi)$. 
For instance, one could take
\begin{align}
q_r(\bi) &:= 
\left\{
\begin{array}{ll}
1+y_{r+1}-y_r&\text{if $i_r = i_{r+1}$},\\
1 - p_r(\bi)&\text{if $i_r \nslash i_{r+1}$},\\
(1 - p_r(\bi)^2) / (y_{r+1}-y_r)&\text{if $i_r \rightarrow i_{r+1}$},\\
1&\text{if $i_r \leftarrow i_{r+1}$},\\
(1-p_r(\bi)) / (y_{r+1}-y_r)&\text{if $i_r \rightleftarrows i_{r+1}$},
\end{array}
\right.
\end{align}
although we do not want to restrict ourselves to this particular choice.

Now, the {\em Khovanov-Lauda generators} of $H^\La_\alpha$ are the elements
\begin{equation}\label{klg}
\{e(\bi)\:|\:\bi\in I^\al\}\cup\{y_1,\dots,y_d\}\cup\{\psi_1,\dots,\psi_{d-1}\},
\end{equation}
where 
$y_r$ is the element defined by (\ref{jon}) and
\begin{align}
\psi_r&:=\sum_{\bi\in I^\al}\phi_rq_r(\bi)^{-1}e(\bi)=
\sum_{\bi\in I^\al}(s_r+p_r(\bi))q_r(\bi)^{-1}e(\bi).
\label{ECoxKL}
\end{align}

\begin{Theorem}\label{dogen}
The elements (\ref{klg}) of $H^\La_\alpha$ 
 satisfy the defining relations (\ref{ERCyc})--(\ref{R7})
of the cyclotomic Khovanov-Lauda algebra.
\end{Theorem}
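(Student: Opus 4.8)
The plan is to verify the relations (\ref{ERCyc})--(\ref{R7}) one at a time, using the explicit formula (\ref{ECoxKL}) for $\psi_r$ together with the intertwining-element relations in Lemma~\ref{int} and the ``divided difference'' calculus of $\S$\ref{SSBN}. The key technical bridge will be the identity
\begin{equation*}
f\phi_r e(\bi) = \phi_r {^{s_r\!}}f e(\bi) + (\text{correction}) e(\bi),
\end{equation*}
where the correction vanishes unless $i_r = i_{r+1}$ and otherwise equals $\partial_r(f) e(\bi)$ (up to a $p_r(\bi)$-twist). Indeed, since $\phi_r = \sum_\bi (s_r + p_r(\bi)) e(\bi)$ by (\ref{EPPhi}), commuting a power series $f$ past $\phi_r$ reduces to commuting $f$ past $s_r$ in $H^\La_\al$, for which one uses the degenerate affine Hecke relation $s_r x_{r+1} = x_r s_r + 1$ and hence $s_r f = {^{s_r\!}}f s_r + \partial_r(f)$ for polynomials $f$ in $x_r, x_{r+1}$, extended by continuity to power series. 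This is exactly the Hecke-algebra shadow of (\ref{EDDR}), and it is the mechanism that converts the properties (\ref{EI2})--(\ref{EI6}) of the $\phi_r$ into the desired relations for the $\psi_r$.

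First I would dispose of the cheap relations: (\ref{ERCyc}) is immediate from (\ref{jon}) and the cyclotomic relation (\ref{ECHA}), since $y_1 e(\bi) = (x_1 - i_1) e(\bi)$ and the minimal polynomial of $x_1$ kills $(x_1-i_1)^{(\La,\al_{i_1})}$ on the weight space $e(\bi)$; the orthogonal-idempotent relations (\ref{R1}) hold by construction of the $e(\bi)$; the commuting relations (\ref{R3Y}) hold because the $y_r$ all lie in the commutative subalgebra $\Pol^\La_\al$; and the ``far'' commutation relations (\ref{R3YPsi}), (\ref{R3Psi}), together with $y_r e(\bi) = e(\bi) y_r$ in (\ref{R2PsiE}), follow from (\ref{EI1}) and (\ref{EI5}) and the fact that $q_s(\bi)$ involves only $y_s, y_{s+1}$. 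The remaining half of (\ref{R2PsiE}), namely $\psi_r e(\bi) = e(s_r{\cdot}\bi)\psi_r$, is (\ref{EI0}) combined with the observation that $q_r(\bi)$ transforms correctly under $s_r$ (this is where property (\ref{EQProp2}) and the twisting conventions are arranged to match up).

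Next come the genuinely computational relations (\ref{R6}), (\ref{R5}), (\ref{R4}), (\ref{R7}). For (\ref{R6}) and (\ref{R5}) I would start from (\ref{EI2}) and (\ref{EI3}), translate $x_r$ into $y_r + i_r$, and divide through by the appropriate $q_r(\bi)$; the two cases $i_r = i_{r+1}$ and $i_r \neq i_{r+1}$ correspond exactly to the two cases in (\ref{EI2})/(\ref{EI3}), and the normalization (\ref{EQProp1}) $q_r(\bi) = 1 + y_{r+1} - y_r$ in the equal case is precisely engineered so that the extra term $1 - x_{r,r+1}$ turns into the required $+1$. For the quadratic relation (\ref{R4}), I would compute $\psi_r^2 e(\bi) = \phi_r q_r(\bi)^{-1} \phi_r q_r(\bi)^{-1} e(\bi)$; pushing the first $q_r(\bi)^{-1}$ past $\phi_r$ using (\ref{EI0}) turns it into ${^{s_r\!}}q_r(s_r{\cdot}\bi)^{-1}$ acting on $e(s_r{\cdot}\bi)$, so $\psi_r^2 e(\bi) = \phi_r^2 \big(q_r(\bi){^{s_r\!}}q_r(s_r{\cdot}\bi)\big)^{-1} e(\bi)$ (in the case $i_r \neq i_{r+1}$), and then (\ref{EI4}) gives $\phi_r^2 e(\bi) = (1 - p_r(\bi)^2) e(\bi)$ after rewriting $x_{r,r+1}^{-1}$ as $p_r(\bi)$ via (\ref{EPE}); the defining property (\ref{EQProp4}) of $q_r(\bi){^{s_r\!}}q_r(s_r{\cdot}\bi)$ then produces exactly the four nonzero right-hand sides of (\ref{R4}). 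The equal case $i_r = i_{r+1}$ is easier: $\phi_r^2 e(\bi) = 2\phi_r e(\bi)$ by (\ref{EI4}) and a short computation with $q_r(\bi) = 1 + y_{r+1} - y_r$ kills it to $0$.

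The main obstacle will be the braid relation (\ref{R7}). Here I would set $r = 1$, write $i := i_1$, $j := i_2$, $k := i_3$ as in the proof of Lemma~\ref{int}, and expand $\psi_1 \psi_2 \psi_1 e(\bi)$ in terms of $\phi_1 \phi_2 \phi_1$ by repeatedly using (\ref{EI0}) to slide all the $q$-factors to the right, collecting them (with the appropriate $s_r$-twists) into a single coefficient; the compatibility condition (\ref{EQProp2}) is exactly what guarantees that the coefficient produced this way from $\psi_1\psi_2\psi_1$ matches the one from $\psi_2\psi_1\psi_2$. One then feeds in (\ref{EI6}), which already contains the case division, and in the nontrivial case $i = k \neq j$ one must check that the explicit correction term $z_r$ from Lemma~\ref{int}, after being re-expressed in terms of the $p_r(\bi)$'s and then divided by the product of $q$'s via (\ref{EQProp4}), collapses to the stated right-hand side of (\ref{R7}) (which is $\pm 1$ or a linear expression in the $y$'s depending on the edge type between $i$ and $j$); this is where the bookkeeping of signs and of which denominator appears is most delicate, and where the sign-modified conventions earn their keep. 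The case $i = j = k$ reduces, as in Case 5 of Lemma~\ref{int}, to an identity purely in the symmetric group together with the $q_r(\bi) = 1 + y_{r+1} - y_r$ substitution, and the remaining cases are handled by the ``otherwise'' branch of (\ref{EI6}) with no correction term.
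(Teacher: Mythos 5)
Your overall strategy is the same as the paper's: verify the easy relations directly, use the intertwining elements $\phi_r$ and their properties (Lemma~\ref{int}), translate everything through the normalizations $q_r(\bi)$ using (\ref{EQProp1}), (\ref{EQProp4}) and (\ref{EQProp2}), and invoke the divided-difference identity (your "key technical bridge," which is (\ref{EDDR}) in the paper) to move power series past $\psi_r$'s. The cheap relations, (\ref{R6}), (\ref{R5}) and (\ref{R4}) are handled exactly as you describe.

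Two caveats. First, a minor misattribution: for $\psi_r e(\bi) = e(s_r{\cdot}\bi)\psi_r$ in (\ref{R2PsiE}), no invocation of the braid compatibility (\ref{EQProp2}) is needed; the identity follows directly from (\ref{EI0}) once you observe that $q_r(\bj)^{-1}$ commutes with all idempotents, so $e(s_r{\cdot}\bi)\phi_r q_r(\bj)^{-1}e(\bj) = \phi_r q_r(\bj)^{-1}e(\bi)e(\bj) = \delta_{\bi,\bj}\psi_r e(\bi)$.

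Second, and more substantively, your account of the hardest case of (\ref{R7}) — $i=k\neq j$ — undersells the computation. You suggest that once all $q$-factors are slid to the right, the only thing left to verify is that the correction $z_r$ from (\ref{EI6}), after renormalization by the product of $q$'s via (\ref{EQProp4}), collapses to $\pm 1$ or $-2y_{r+1}+y_r+y_{r+2}$. In fact the divided-difference calculus you set up in your first paragraph produces its own correction terms in this case: since $s_1\cdot(iji) = (jii)$ and $s_2\cdot(iji)=(iij)$ each have a repeated index adjacent to the position where the next $\psi$ acts, commuting a $q$-factor past $\psi_2\psi_1$ (resp.\ $\psi_1\psi_2$) picks up a $\partial$-term. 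One therefore gets an expression of the form $A+B-C$ where $A$ comes from $z_r$ and $B,C$ from these $\partial$-corrections, and it is the combination that must reduce to the right-hand side of (\ref{R7}). This in turn requires an auxiliary power-series identity in the $p_r(\bi)$'s (namely $p_1(iji)+p_2(iji)=(y_1-y_3)p_1(iji)p_2(iji)$) together with (\ref{EQProp1}) and (\ref{EQProp4}). As written, your sketch would not see the $B-C$ contribution and the check would appear to fail; it works only once those terms are included.
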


\begin{proof}
We have by (\ref{ECHA}) that
$\prod_{i\in I}(x_1-i)^{(\La,\al_{i})}=0$. Moreover, if $i\neq i_1$ then $(x_1-i)e(\bi)$ is invertible in $\Pol_\al^\La e(\bi)$. So 
$(x_1-i_{1})^{(\La,\al_{i_1})}e(\bi)=0$, which immediately implies (\ref{ERCyc}). 
We already know that (\ref{R1}) holds.
The relations (\ref{R2PsiE}), (\ref{R3Y}), 
(\ref{R3YPsi}) and (\ref{R3Psi}) follow using 
the fact that $\Pol_\al^\La$ is commutative and 
the properties (\ref{EI0}), (\ref{EI1}) and (\ref{EI5}) of the intertwining elements. 

For (\ref{R5}), in view of (\ref{R2PsiE}), we have
\begin{equation}\label{ESunday}
y_{r+1} \psi_r e(\bi)=y_{r+1} e(s_r{\cdot}\bi)\psi_r e(\bi)=(x_{r+1}-i_{r})\phi_rq_r(\bi)^{-1}e(\bi).
\end{equation}
If $i_r\neq i_{r+1}$, 
this equals  
$\phi_rq_r(\bi)^{-1}(x_{r}-i_{r})e(\bi)=\psi_ry_re(\bi)$ by (\ref{EI3}).
If $i_r=i_{r+1}$, then (\ref{ESunday}) gives 
\begin{align*}
y_{r+1} \psi_r e(\bi) &=(x_{r+1}-i_{r})(s_r+1)(1-x_{r,r+1})^{-1}e(\bi)\\
&=\big((s_r+1)(x_{r}-i_{r})+1-x_{r,r+1}\big)(1-x_{r,r+1})^{-1}e(\bi)\\&=(\psi_ry_r+1)e(\bi).
\end{align*}
The proof of (\ref{R6}) is similar. 

For (\ref{R4}), we have that
\begin{equation}\label{ESat}
\psi_r^2e(\bi) =\phi_rq_r(s_r{\cdot}\bi)^{-1}\psi_re(\bi). 
\end{equation}
If $i_r = i_{r+1}$, the relations in $H^\La_\alpha$
give easily that
$(s_r+1)(1+x_{r,r+1}) = (1-x_{r,r+1})(s_r-1)$, hence we get from (\ref{ESat})
that
\begin{align*}
\psi_r^2 e(\bi) &=
(s_r+1)(1-x_{r,r+1})^{-1}(s_r+1)(1-x_{r,r+1})^{-1}e(\bi)
\\&=(s_r+1)(s_r-1)(1+x_{r,r+1})^{-1}(1-x_{r,r+1})^{-1}e(\bi)=0.
\end{align*}
Now suppose that $i_r \neq i_{r+1}$. Note as we have now checked the
relations (\ref{R3YPsi}), (\ref{R6}) and (\ref{R5}),
the identity (\ref{EDDR}) holds in the present situation.
Using (\ref{EDDR}), (\ref{EI4}) and (\ref{EPE}), the equation
(\ref{ESat}) becomes
\begin{align*}
\psi_r^2 e(\bi) &=
\phi_r\psi_r({^{s_r\!}}q_r(s_r{\cdot}\bi))^{-1}e(\bi)=\phi_r^2q_r(\bi)^{-1}\left({^{s_r\!}}q_r(s_r{\cdot}\bi)\right)^{-1}e(\bi)\\
&=
(1 - p_r(\bi)^2)  \left(q_r(\bi) {^{s_r\!}} q_r(s_r{\cdot}\bi)\right)^{-1}
e(\bi).
\end{align*}
Using (\ref{EQProp4}), this simplifies to give
the right hand side of (\ref{R4}).

Finally we prove (\ref{R7}). Let us stop writing $e(\bi)$ at the right of all expressions.  
Assume without loss of generality that $r=1$, $d=3$,  and denote $i := i_1, j := i_2$, $k := i_3$.

\subsubsection*{Case 1: $i,j,k$ all distinct}
Using (\ref{EDDR}) and (\ref{R2PsiE}), we get
\begin{align*}
\psi_1\psi_2\psi_1&=\phi_1q_1(jki)^{-1}\psi_2\psi_1=\phi_1\psi_2\psi_1
\left({^{s_1s_2}}q_1(jki)\right)^{-1}
\\&
=\phi_1\phi_2q_2(jik)^{-1}\psi_1\left({^{s_1s_2}}q_1(jki)\right)^{-1}\\
&=\phi_1\phi_2\psi_1\left({^{s_1\!}}q_2(jik){^{s_1s_2}}q_1(jki)\right)^{-1}
\\
&=\phi_1\phi_2\phi_1\left(q_1(ijk){^{s_1\!}}q_2(jik){^{s_1s_2}}q_1(jki)\right)^{-1}.
\end{align*}
Similarly,
$$
\psi_2\psi_1\psi_2=
\phi_2\phi_1\phi_2\left(q_2(ijk){^{s_2}}q_1(ikj){^{s_2s_1\!}}q_2(kij)\right)^{-1}.
$$
Now, $\phi_1\phi_2\phi_1=\phi_2\phi_1\phi_2$ by (\ref{EI6}), and also $q_2(ijk)={^{s_1s_2}}q_1(jki)$, ${^{s_2}}q_1(ikj)={^{s_1\!}}q_2(jik)$ and 
${^{s_2s_1\!}}q_2(kij)=q_1(ijk)$ by (\ref{EQProp2}). Thus  (\ref{R7}) holds.

\subsubsection*{Case 2: $i=j\neq k$}
As in the previous case, we get that
\begin{equation}\label{EPicknic}
\psi_2\psi_1\psi_2=\phi_2\phi_1\phi_2\left(q_2(iik){^{s_2}}q_1(iki){^{s_2s_1\!}}q_2(kii)\right)^{-1}.
\end{equation}
On the other hand, by (\ref{EDDR}) and (\ref{EQProp2}),
\begin{align*}
\psi_1\psi_2\psi_1&=\phi_1q_1(iki)^{-1}\psi_2\psi_1\\
&=\phi_1\psi_2\psi_1({^{s_1s_2}}q_1(iki))^{-1}+
\phi_1\psi_2\partial_1\!\left(({^{s_2}}q_1(iki))^{-1}\right)
\\
&=\phi_1\phi_2q_2(iik)^{-1}\psi_1({^{s_1s_2}}q_1(iki))^{-1}\!+\phi_1\phi_2q_2(iik)^{-1}\partial_1\!\left(({^{s_2}}q_1(iki))^{-1}\right)
\end{align*}\begin{align*}
\phantom{\psi_1\psi_2\psi_1}
&=\phi_1\phi_2\psi_1\left({^{s_1\!}}q_2(iik){^{s_1s_2}}q_1(iki)\right)^{-1}
\!+\phi_1\phi_2\partial_1\!\left(q_2(iik)^{-1}\right)\!({^{s_1s_2}}q_1(iki))^{-1}\\
&\qquad+\phi_1\phi_2q_2(iik)^{-1}\partial_1\!\left(({^{s_2}}q_1(iki))^{-1}\right).
\\
&=\phi_1\phi_2\phi_1\left(q_1(iik){^{s_1\!}}q_2(iik){^{s_1s_2}}q_1(iki)\right)^{-1}
\\&\qquad+\phi_1\phi_2
q_2(iik)^{-1}
\partial_1\!\left(q_2(iik)^{-1}
+({^{s_1\!}}q_2(iik))^{-1}\right).
\end{align*}
The first term of the last expression equals the right hand side of (\ref{EPicknic}) by (\ref{EI6}) and (\ref{EQProp2}). It remains to observe
that the second term of the last expression is zero,
as $\partial_1 f = 0$ for any $f$ with
${^{s_1\!}f}=f$.

\subsubsection*{Case 3: $i\neq j=k$} This case is similar to Case 2. 

\subsubsection*{Case 4: $i=k\neq j$} As in the previous cases we compute:
\begin{align*}
\psi_1\psi_2\psi_1&=\phi_1\phi_2\phi_1
\left(q_1(iji){^{s_1\!}}q_2(jii){^{s_1s_2}}q_1(jii)\right)^{-1}
\\&\qquad+(1-x_{1,2}^{-2})q_1(iji)^{-1}\left({^{s_1}}\partial_2(q_1(jii)^{-1})\right),\\
\psi_2\psi_1\psi_2&=
\phi_2\phi_1\phi_2\left(q_2(iji){^{s_2}}q_1(iij){^{s_2s_1\!}}q_2(iij)\right)^{-1}
\\&\qquad+(1-x_{2,3}^{-2})q_2(iji)^{-1}\left({^{s_2}}\partial_1(q_2(iij)^{-1})\right).
\end{align*}
So by (\ref{EI6}), (\ref{EQProp2}), and (\ref{EPE}), we get that
$\psi_1\psi_2\psi_1-\psi_2\psi_1\psi_2=A+B-C$, where 
\begin{align*}
A&=\big(p_{1}(iji)-p_{2}(iji)\big)\big(p_{1}(iji)p_{2}(iji)-p_{1}(iji)
-p_{2}(iji)\big)\\
&\qquad\qquad\qquad\times \left(q_1(iji){^{s_1\!}}q_2(jii)q_2(iji)\right)^{-1},
\\B&=(1-p_{1}(iji)^2)q_1(iji)^{-1}\left({^{s_1}}\partial_2(q_1(jii)^{-1})\right),
\\C&=(1-p_{2}(iji)^{2})q_2(iji)^{-1}\left({^{s_2}}\partial_1(q_2(iij)^{-1})\right).
\end{align*}
By substituting 
$p_1(iji) = (i-j+y_1-y_2)^{-1}$, $p_2(iji)=(j-i+y_2-y_3)^{-1}$
and putting over a common denominator, it is straightforward to check
the following power series identity:
\begin{equation}\label{PID1}
p_1(iji)+p_2(iji) = (y_1-y_3) p_1(iji) p_2(iji).
\end{equation}
Hence:
$$
\frac{p_1(iji)p_2(iji)-p_1(iji)-p_2(iji)}{1+y_3-y_1}
=
\frac{p_1(iji)+p_2(iji)}{y_1-y_3}.
$$
Using this and noting ${^{s_1\!}}q_2(jii) = 1 +y_3-y_1$,
we deduce that
\begin{equation}\label{A}
A = \frac{(p_1(iji)^2-p_2(iji)^2) q_1(iji)^{-1} q_2(iji)^{-1}}{y_1-y_3}.
\end{equation}
Using the definition (\ref{EPartial}) and the property (\ref{EQProp4}), 
we have that
$$
{^{s_1}}\partial_2(q_1(jii)^{-1})=\frac{q_2(iji)^{-1}-({^{s_1\!}}q_1(jii))^{-1}}{y_1-y_3}.
$$
Substituting this into $B$, we get that
\begin{equation}\label{B}
B=
\frac{(1-p_1(iji)^2) \left((q_1(iji)^{-1} q_2(iji)^{-1} - (q_1(iji) {^{s_1\!}}q_1(jii))^{-1}\right)}{y_1-y_3}.
\end{equation}
Similarly,
\begin{equation}\label{C}
C
=\frac{(1-p_{2}(iji)^2)\left(q_1(iji)^{-1}q_2(iji)^{-1}-(q_2(iji) {^{s_2}}q_2(iij))^{-1}\right)}{y_1-y_3}.
\end{equation}
The equations (\ref{A}), (\ref{B}) and (\ref{C}) easily give that
$A+B-C$ equals
$$
\frac{(1-p_2(iji)^2)(q_2(iji){^{s_2}}q_2(iij))^{-1}-(1-p_1(iji)^2)(q_1(iji){^{s_1\!}}q_1(jii))^{-1}}{y_1-y_3}.
$$
Finally by (\ref{EQProp4}) this is $0$
if $i \nslash j$, 
$1$ if $i \rightarrow j$, $-1$ if $i \leftarrow j$
or $-2y_2+y_1+y_3$ if $i \rightleftarrows j$.
This imples (\ref{R7}).

\subsubsection*{Case 5: $i=j=k$} We leave this case as an exercise to the 
reader.
\end{proof} 
  
\subsection{\boldmath Degenerate Hecke generators of $R^\La_\al$}
Let $R^\La_\alpha$ be the cyclotomic Khovanov-Lauda algebra
from $\S$\ref{sckl}.
Using the homomorphism (\ref{Rhom}),
we can regard the power series
$p_r(\bi)$ from (\ref{EP}) 
as elements of $R_\al^\La$.
Similarly, the power series
$q_r(\bi)$ satisfying (\ref{EQProp1})--(\ref{EQProp2})
that were chosen in $\S$\ref{SKLGens}
give rise to elements $q_r(\bi) \in R_\al^\La$.
The {\em degenerate Hecke generators} of $R_\al^\La$ are the elements
\begin{equation}\label{hg}
\{x_1,\dots,x_d\}\cup\{s_1,\dots,s_{d-1}\}
\end{equation}
where
\begin{align}
 x_r&:=\sum_{\bi\in I^\al}
 (y_r+i_r)e(\bi),\label{ET1}\\
 s_r&:=\sum_{\bi\in I^\al}
 (\psi_rq_r(\bi)-p_r(\bi))e(\bi).
 \label{ET2}
\end{align}
We note by (\ref{R4})
and (\ref{EQProp4}) that
\begin{equation}\label{EUs}
\psi_r^2q_r(\bi){^{s_r\!}}q_r(s_r{\cdot}\bi)e(\bi)=(1-p_r(\bi)^2)e(\bi)
\end{equation}
for all $\bi \in I^\alpha$ and $1 \leq r < d$.
The following result is the key technical result needed to complete the
proof of our Main Theorem in the degenerate case.

\begin{Theorem}\label{thm2}
The elements (\ref{hg})
of
$R^\La_\alpha$ 
satisfy the defining relations (\ref{EPoly})--(\ref{ECoxeter2})
of the degenerate affine Hecke algebra $H_d$.
\end{Theorem}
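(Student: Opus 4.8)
The plan is to verify each of the defining relations (\ref{EPoly})--(\ref{ECoxeter2}) for the elements $x_r, s_r$ given by (\ref{ET1}) and (\ref{ET2}), working entirely inside $R^\La_\al$ using the relations (\ref{ERCyc})--(\ref{R7}) together with the consequences already recorded: the identity (\ref{EDDR}), the nilpotency Lemma~\ref{LNilY} (so that the homomorphism (\ref{Rhom}) and all power-series expressions $p_r(\bi), q_r(\bi)^{-1}$ make sense), and the key quadratic identity (\ref{EUs}). The relation (\ref{EPoly}), $x_rx_s=x_sx_r$, is immediate from (\ref{R3Y}) since $x_r$ is a power series in the commuting $y_r$'s. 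The relation $s_rx_s=x_ss_r$ for $s\neq r,r+1$ follows from (\ref{R3YPsi}) and the fact that $q_r(\bi),p_r(\bi)$ only involve $y_r,y_{r+1}$. So the real content is the twisted commutation relation $s_rx_{r+1}=x_rs_r+1$, the quadratic relation $s_r^2=1$, and the two Coxeter-type relations in (\ref{ECoxeter2}).

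For $s_r^2=1$: expand $s_r^2 e(\bi)=(\psi_rq_r(s_r{\cdot}\bi)-p_r(s_r{\cdot}\bi))(\psi_rq_r(\bi)-p_r(\bi))e(\bi)$, using (\ref{R2PsiE}) to move idempotents past $\psi_r$, then use (\ref{EDDR}) to push the power series $q_r(\bi)$ through $\psi_r$ (picking up a $\partial_r$ term exactly when $i_r=i_{r+1}$), and finally collapse the $\psi_r^2$ term via (\ref{EUs}). One expects the $\psi_r^2$-term to contribute $(1-p_r(\bi)^2)e(\bi)$, the two cross terms $-\psi_r(\dots)$ to contribute an expression that must cancel, and the constant term $p_r(s_r{\cdot}\bi)p_r(\bi)e(\bi)$; the properties (\ref{EP1}), (\ref{EQProp1}) and a short power-series computation should make everything collapse to $e(\bi)$. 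The case $i_r=i_{r+1}$ needs separate care because of the $\partial_r$ terms and because there $p_r(\bi)=1$, $q_r(\bi)=1+y_{r+1}-y_r$. For $s_rx_{r+1}=x_rs_r+1$: substitute (\ref{ET1}), (\ref{ET2}), and reduce to a statement about $\psi_r y_{r+1}$ versus $y_r\psi_r$, which is governed precisely by (\ref{R6})/(\ref{R5}) in the two cases $i_r=i_{r+1}$ and $i_r\neq i_{r+1}$; the $p_r(\bi)$ correction terms are designed so that the ``$+1$'' comes out correctly — in the unequal case one uses $y_{r+1}-y_r+i_{r+1}-i_r=p_r(\bi)^{-1}$, and in the equal case the ``$+1$'' from (\ref{R6}) does the job directly.

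The hard part will be the braid relation $s_rs_{r+1}s_r=s_{r+1}s_rs_{r+1}$ (equivalently $s_1s_2s_1=s_2s_1s_2$ for $d=3$). The natural approach is to reverse the bookkeeping of Theorem~\ref{dogen}: there one deduced (\ref{R7}) from (\ref{EI6}) by introducing $q_r(\bi)$; here one should expand $s_r$ in terms of $\psi_r$ and $q_r,p_r$, use (\ref{EDDR}) and (\ref{R7}) to reduce to the intertwiner braid identity (\ref{EI6}) for the elements $\phi_r:=\sum_\bi(\psi_rq_r(\bi)+p_r(\bi))e(\bi)$ — note $\phi_r=s_r+2\sum_\bi p_r(\bi)e(\bi)$... actually $\phi_r = s_r + \sum_\bi p_r(\bi)e(\bi)$ only after one checks $\psi_r q_r(\bi)e(\bi)=\phi_re(\bi)-p_r(\bi)e(\bi)$ matches the definition — and then verify (\ref{EI6}) holds for these $\phi_r$ directly from (\ref{R7}) and (\ref{EP1})--(\ref{EP3}). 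One expects to split into the same five cases as in Lemma~\ref{int}: all of $i,j,k$ distinct; $i=j\neq k$; $i\neq j=k$; $i=k\neq j$; and $i=j=k$. The case $i=k\neq j$ will be the most delicate, requiring the power-series identity (\ref{PID1}) and the compatibility conditions (\ref{EQProp2})--(\ref{EP3}) among the various $q_r$'s and $p_r$'s, exactly as in Case 4 of Theorem~\ref{dogen} run in reverse. The relation $s_rs_t=s_ts_r$ for $|r-t|>1$ is routine from (\ref{R3Psi}), (\ref{R3YPsi}) and the disjointness of the variables involved. Throughout, the only genuinely new inputs beyond the relations are the algebraic properties (\ref{EP1})--(\ref{EP3}) and (\ref{EQProp1})--(\ref{EQProp2}) of the chosen power series, together with (\ref{EUs}); the rest is careful but mechanical manipulation.
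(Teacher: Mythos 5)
Your outline is sound and your treatment of (\ref{EPoly}), the easy part of (\ref{EDAHA}), the quadratic relation (\ref{ECoxeter1}), and the twisted commutation $s_r x_{r+1} = x_r s_r + 1$ is the same as the paper's: expand (\ref{ET1})--(\ref{ET2}), push power series through $\psi_r$ via (\ref{EDDR}), use (\ref{R6})/(\ref{R5}), collapse $\psi_r^2$ via (\ref{EUs}), and treat $i_r=i_{r+1}$ vs.\ $i_r\neq i_{r+1}$ separately. For the length-three braid relation, however, the paper does \emph{not} factor through an intertwiner-type braid identity in $R^\La_\al$. It simply expands $s_2s_1s_2$ and $s_1s_2s_1$ in full (the expressions (\ref{ELHS}), (\ref{ERHS})), commutes all $\psi$'s to the left via (\ref{EDDR}), and matches the coefficients of each $\psi$-monomial across the five case splits, invoking (\ref{R7}), (\ref{EUs}), (\ref{PID1}), and the compatibilities (\ref{EP1}), (\ref{EP3}), (\ref{EQProp1})--(\ref{EQProp2}) as needed. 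Your proposal to first establish the analogue of (\ref{EI6}) for $\phi_r$ inside $R^\La_\al$ and then deduce the $s_r$ braid relation is a genuinely different organization and should work, since it amounts to running Theorem~\ref{dogen} and Lemma~\ref{int} in reverse; but note that the deduction $\phi_r$-braid $\Rightarrow$ $s_r$-braid is itself a five-case coefficient chase of comparable length, so the reorganization does not save work, it just splits the bookkeeping into two lemmas.

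Two small slips worth fixing. First, the element you want is $\phi_r := \sum_{\bi}\psi_r q_r(\bi)e(\bi)$; with (\ref{ET2}) this equals $s_r + \sum_{\bi}p_r(\bi)e(\bi)$, matching (\ref{EPPhi}), whereas your tentative $\sum_{\bi}(\psi_rq_r(\bi)+p_r(\bi))e(\bi)$ equals $s_r + 2\sum_{\bi}p_r(\bi)e(\bi)$ and is not the intertwiner. Second, in your sketch of the twisted commutation for $i_r\neq i_{r+1}$ you wrote $y_{r+1}-y_r+i_{r+1}-i_r = p_r(\bi)^{-1}$; by (\ref{EP}) this should read $y_r-y_{r+1}+i_r-i_{r+1}=p_r(\bi)^{-1}$ — the paper's computation uses $(i_r-i_{r+1}+y_r-y_{r+1})p_r(\bi)e(\bi)=e(\bi)$. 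Neither slip derails the strategy, but the sign on $p_r(\bi)^{-1}$ must be right for the ``$+1$'' to appear.
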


\begin{proof}
The polynomial relation (\ref{EPoly}) is obvious: the 
$x_r$'s commute because the $y_r$'s and $e(\bi)$'s do. 
The mixed relation (\ref{EDAHA}) is clear for 
$s\neq r,r+1$.
For the remaining mixed relation, it suffices to show that
$$
(s_r x_{r+1} - x_r s_r) e(\bi) = e(\bi)
$$ 
for 
every $\bi \in I^\alpha$.
Expand the definitions
(\ref{ET1})--(\ref{ET2}) using (\ref{R2PsiE}) gives:
\begin{align*}
s_r x_{r+1} e(\bi)
&=
(\psi_r q_r(\bi) - p_r(\bi)) (y_{r+1} + i_{r+1}) e(\bi),\\
x_r s_r e(\bi) &=
x_r (\psi_r q_r(\bi) - p_r(\bi)) e(\bi)\\&=
x_r e(s_r{\cdot}\bi) \psi_r q_r(\bi) e(\bi) - x_r e(\bi) p_r(\bi) e(\bi)\\
&=
(y_r + i_{r+1}) \psi_r q_r(\bi) e(\bi) - (y_r+i_r) p_r(\bi) e(\bi).
\end{align*}
Hence
$$
(s_r x_{r+1} - x_r s_r)e(\bi)
=
(i_r - i_{r+1} + y_r - y_{r+1}) p_r(\bi) e(\bi)
+ (\psi_r y_{r+1}-y_r \psi_r) q_r(\bi) e(\bi).
$$
Applying (\ref{R6}), (\ref{EP}) and (\ref{EQProp1}),
we have that
$(\psi_r y_{r+1}-y_r \psi_r)e(\bi) = e(\bi)$,
$p_r(\bi) = 1$ and $q_r(\bi) = 1 + y_{r+1}-y_r$
if $i_r = i_{r+1}$, or
$(\psi_r y_{r+1} - y_r \psi_r) e(\bi) = 0$
and $p_r(\bi) = (i_r - i_{r+1} + y_r - y_{r+1})^{-1}$
if $i_r \neq i_{r+1}$.
Making these substitutions 
gives easily that $(s_r x_{r+1}-x_r s_r) e(\bi) = e(\bi)$
as required.

Next we check the quadratic relation (\ref{ECoxeter1}).
For this we need to show that $$
s_r^2 e(\bi) = e(\bi)
$$ for each $\bi \in 
I^\alpha$.
Expanding the definition (\ref{ET2}),
we get that
\begin{align*}
s_r^2 e(\bi) &= 
s_r (\psi_r q_r(\bi) - p_r(\bi)) e(\bi)
=s_r e(s_r{\cdot}\bi)\psi_r q_r(\bi) e(\bi) - s_r e(\bi)p_r(\bi) e(\bi)\\
&= 
(\psi_r q_r(s_r{\cdot}\bi) - p_r(s_r{\cdot}\bi)) \psi_r q_r(\bi) e(\bi)
- (\psi_rq_r(\bi) - p_r(\bi))p_r(\bi) e(\bi)\\
&= 
\left(\psi_r q_r(s_r{\cdot}\bi)\psi_r q_r(\bi)  - p_r(s_r{\cdot}\bi)\psi_r q_r(\bi)  
- \psi_rq_r(\bi)p_r(\bi) + p_r(\bi)^2 \right)e(\bi).
\end{align*}
If $i_r = i_{r+1}$, we use
(\ref{EP}) and (\ref{EQProp1}) to get from this that
$$
s_r^2 e(\bi) = 
\left(\psi_r (1+y_{r+1}-y_r) \psi_r q_r(\bi)
-2 q_r(\bi) + 1\right) e(\bi).
$$
Using (\ref{R6})--(\ref{R5}) to commute $y$'s to the right
and noting that $\psi_r^2 e(\bi) = 0$ by (\ref{R4}),
this easily simplifies to give the desired equation
$s_r^2 e(\bi) = e(\bi)$.
Instead, if $i_r \neq i_{r+1}$, then we again
commute $y$'s to the right
and use
(\ref{EUs}) and (\ref{EP1}) to get that
\begin{align*}
s_r^2 e(\bi) &=
\left((1 - p_r(\bi)^2)
+ \psi_r p_r(\bi) q_r(\bi)
- \psi_r q_r(\bi) p_r(\bi) + p_r(\bi)^2\right) e(\bi)= e(\bi).
\end{align*}
This completes the proof of (\ref{ECoxeter1}).

Finally we need to check the braid relations (\ref{ECoxeter2}).
The commuting braid relation is obvious.
For the length three braid relation,
we assume without loss of generality that $r=1$ and $d=3$, and 
need to show that $$
s_2s_1s_2 e(ijk) = s_1s_2s_1 e(ijk)
$$
for all $i,j,k$.
To simplify notation for the remainder of the proof, we 
stop writing $e(ijk)$ on the right hand side of all expressions, but
remember it is always there.
Expanding the definition (\ref{ET2}) like in the previous paragraph,
$s_2s_1s_2$ and $s_1s_2s_1$
equal
\begin{equation}\label{ELHS}
\begin{split}
&-p_2(ijk)p_1(ijk)p_2(ijk)+ \psi_2q_2(ijk)p_1(ijk)p_2(ijk)
\\
&+ p_2(jik)\psi_1q_1(ijk)p_2(ijk)
-\psi_2q_2(jik)\psi_1q_1(ijk)p_2(ijk)
\\
&+ p_2(ikj)p_1(ikj)\psi_2q_2(ijk)
- \psi_2q_2(ikj)p_1(ikj)\psi_2q_2(ijk)
\\
&- p_2(kij)\psi_1q_1(ikj)\psi_2q_2(ijk)
+ \psi_2q_2(kij)\psi_1q_1(ikj)\psi_2q_2(ijk),
\end{split}
\end{equation}
and 
\begin{equation}\label{ERHS}
\begin{split}
&-p_1(ijk)p_2(ijk)p_1(ijk)
+ \psi_1q_1(ijk)p_2(ijk)p_1(ijk)
\\&+ p_1(ikj)\psi_2q_2(ijk)p_1(ijk)
- \psi_1q_1(ikj)\psi_2q_2(ijk)p_1(ijk)
\\&+ p_1(jik)p_2(jik)\psi_1q_1(ijk)
- \psi_1q_1(jik)p_2(jik)\psi_1q_1(ijk)
\\& - p_1(jki)\psi_2q_2(jik)\psi_1q_1(ijk)
+ \psi_1q_1(jki)\psi_2q_2(jik)\psi_1q_1(ijk),
\end{split}
\end{equation}
respectively.
We have to prove that (\ref{ELHS}) equals (\ref{ERHS}). 
For this we consider five cases. The 
strategy is always to commute all $\psi$'s to the left 
using (\ref{EDDR}) 
then to compare various 
$\psi$-coefficients. 

\subsubsection*{Case 1: $i,j,k$ are all different} By 
(\ref{EDDR}), (\ref{ELHS}) equals 
\begin{align*}
&-p_2(ijk)p_1(ijk)p_2(ijk)+ \psi_2q_2(ijk)p_1(ijk)p_2(ijk)
\\
&+ \psi_1{^{s_1\!}}p_2(jik)q_1(ijk)p_2(ijk)
- \psi_2\psi_1{^{s_1\!}}q_2(jik)q_1(ijk)p_2(ijk)
\\
&+ \psi_2{^{s_2}}p_2(ikj){^{s_2}}p_1(ikj)q_2(ijk)
- (\psi_2^2){^{s_2}}q_2(ikj){^{s_2}}p_1(ikj)q_2(ijk)
\\
&- \psi_1\psi_2{^{s_2s_1\!}}p_2(kij){^{s_2}}q_1(ikj)q_2(ijk)
+ \psi_2\psi_1\psi_2{^{s_2s_1\!}}q_2(kij){^{s_2}}q_1(ikj)q_2(ijk),
\end{align*}
and (\ref{ERHS}) equals
\begin{align*}
&-p_1(ijk)p_2(ijk)p_1(ijk)
+ \psi_1q_1(ijk)p_2(ijk)p_1(ijk)
\\&+ \psi_2{^{s_2}}p_1(ikj)q_2(ijk)p_1(ijk)
- \psi_1\psi_2{^{s_2}}q_1(ikj)q_2(ijk)p_1(ijk)
\\&+ \psi_1{^{s_1\!}}p_1(jik){^{s_1\!}}p_2(jik)q_1(ijk)
- (\psi_1^2){^{s_1\!}}q_1(jik){^{s_1\!}}p_2(jik)q_1(ijk)
\\& - \psi_2\psi_1{^{s_1s_2}}p_1(jki){^{s_1\!}}q_2(jik)q_1(ijk)
+ \psi_1\psi_2\psi_1{^{s_1s_2}}q_1(jki){^{s_1\!}}q_2(jik)q_1(ijk).
\end{align*}
Note that $\psi_2\psi_1\psi_2=\psi_1\psi_2\psi_1$ under our assumptions on $i,j,k$, and the corresponding coefficients are equal to each other 
by (\ref{EQProp2}). 
For the $\psi_1\psi_2$-coefficients, we need to observe by (\ref{EP3}) that
${^{s_2s_1\!}}p_2(kij)=p_1(ijk)$. 
The $\psi_2\psi_1$-coefficients are treated similarly. 
For the $\psi_1$-coefficients, it suffices to prove that   
$$
{^{s_1\!}}p_2(jik)p_2(ijk)
=p_2(ijk)p_1(ijk)+{^{s_1\!}}p_1(jik){^{s_1\!}}p_2(jik), 
$$
which is easily checked by expanding 
the definition (\ref{EP}) and clearing denominators. 
The $\psi_2$-coefficients are handled similarly. 
Finally, the constant term reduces using (\ref{EUs}) 
and the observation that ${^{s_2}}p_1(ikj) = {^{s_1\!}}p_2(jik)$
to checking that
$$
({^{s_2}}p_1(ikj)-p_1(ijk)) p_2(ijk)^2 = 
({^{s_1\!}}p_2(jik)-p_2(ijk))p_1(ijk)^2.
$$
Again this identity follows by an explicit expansion
using (\ref{EP}).

\subsubsection*{Case 2: $i=j\neq k$} 
We have $p_1(iik)=p_2(kii)=1$ by definition.
So, 
using (\ref{EDDR}), the expression (\ref{ELHS}) equals
\begin{equation*}
\begin{split}
&-p_2(iik)^2+ \psi_2q_2(iik)p_2(iik)
+ \psi_1{^{s_1\!}}p_2(iik)q_1(iik)p_2(iik)
\\
&+\partial_1(p_2(iik))q_1(iik)p_2(iik)
- \psi_2\psi_1{^{s_1\!}}q_2(iik)q_1(iik)p_2(iik)
\\
&-\psi_2\partial_1(q_2(iik))q_1(iik)p_2(iik)
+ \psi_2{^{s_2}}p_2(iki){^{s_2}}p_1(iki)q_2(iik)
\\
&- (\psi_2^2){^{s_2}}q_2(iki){^{s_2}}p_1(iki)q_2(iik)
-\psi_1\psi_2{^{s_2}}q_1(iki)q_2(iik)
\\
&+ \psi_2\psi_1\psi_2{^{s_2s_1\!}}q_2(kii){^{s_2}}q_1(iki)q_2(iik).
\end{split}
\end{equation*}
Similarly, using also (\ref{EPR}),
(\ref{ERHS}) becomes
\begin{equation*}
\begin{split}
&-p_2(iik)
+ \psi_1q_1(iik)p_2(iik)
+ \psi_2{^{s_2}}p_1(iki)q_2(iik)
- \psi_1\psi_2{^{s_2}}q_1(iki)q_2(iik)
\\&+\psi_1{^{s_1\!}}p_2(iik)q_1(iik)
+\partial_1(p_2(iik))q_1(iik)
- (\psi_1^2){^{s_1\!}}q_1(iik){^{s_1\!}}p_2(iik)q_1(iik)
\\&- \psi_1\partial_1(q_1(iik)){^{s_1\!}}p_2(iik)q_1(iik)
- \psi_1\partial_1(p_2(iik))q_1(iik)^2
\\& - \psi_2\psi_1{^{s_1s_2}}p_1(iki){^{s_1\!}}q_2(iik)q_1(iik)
-\psi_2\partial_1({^{s_2}}p_1(iki)){^{s_1\!}}q_2(iik)q_1(iik)
\\&- \psi_2{^{s_2}}p_1(iki)\partial_1(q_2(iik))q_1(iik)
+ \psi_1\psi_2\psi_1{^{s_1s_2}}q_1(iki){^{s_1\!}}q_2(iik)q_1(iik)
\\&+ \psi_1\psi_2\partial_1({^{s_2}}q_1(iki)){^{s_1\!}}q_2(iik)q_1(iik)
+ \psi_1\psi_2{^{s_2}}q_1(iki)\partial_1(q_2(iik))q_1(iik).
\end{split}
\end{equation*}
Now it is easy to check that the $\psi_1\psi_2\psi_1$-, $\psi_2\psi_1$-coefficients in the two  expressions above are equal to each other
using (\ref{EP3}) and (\ref{EQProp2}). 
For the $\psi_1\psi_2$-coefficient, we need to use
$$
\partial_1(q_2(iik))+\partial_1({^{s_2}}q_1(iki))
=\partial_1(q_2(iik)+{^{s_1\!}}q_2(iik))=0.
$$
By a calculation using
(\ref{EP}) and (\ref{EPartial}),
$\partial_1(p_2(iik))=-{^{s_1\!}}p_2(iik)p_2(iik)$
and 
$\partial_1({^{s_2}}p_1(iki)) =
{^{s_1\!}}p_2(iik) p_2(iik)$.
Also
$q_1(iik) = 1 + y_2-y_1$  by
(\ref{EQProp1})
hence 
$\partial_1(q_1(iik)) = 2$.
So to check that the $\psi_1$-coefficients agree,
it suffices to prove 
$$
{^{s_1\!}}p_2(iik) p_2(iik) =
p_2(iik)-{^{s_1\!}}p_2(iik) + {^{s_1\!}}p_2(iik)p_2(iik) (1+y_2-y_1).
$$
This follows from the power series identity
\begin{equation}\label{PS2}
p_2(iik) - {^{s_1\!}}p_2(iik) = (y_1-y_2) {^{s_1\!}}p_2(iik)p_2(iik),
\end{equation}
which is easily checked from the definition
(\ref{EP}). 
For the $\psi_2$-coefficients we need to prove
\begin{multline*}
q_2(iik)\left(p_2(iik)-{^{s_2}}p_1(iki)+{^{s_2}}p_2(iki){^{s_2}}p_1(iki)
\right)
=\\q_1(iik)\left(\partial_1(q_2(iik))[p_2(iik)-{^{s_2}}p_1(iki)]
-p_2(iik){^{s_1\!}}p_2(iik){^{s_1\!}}q_2(iik)\right).
\end{multline*} 
Using (\ref{PS2}), (\ref{EP1}) and (\ref{EP3}), the left hand side of this
simplifies to
$$
q_2(iik) (y_1-y_2-1) {^{s_1\!}}p_2(iik) p_2(iik).
$$
Using (\ref{PS2}) again
and expanding the $\partial_1$, 
the right hand side equals
$$
q_1(iik) \left(({^{s_1\!}}q_2(iik) - q_2(iik)){^{s_1\!}}p_2(iik)p_2(iik)
- p_2(iik) {^{s_1\!}}p_2(iik) {^{s_1\!}}q_2(iik)\right).
$$
Making obvious cancellations and recalling (\ref{EQProp1})
this reduces to the left hand side.
Finally, for the constant term we want 
\begin{multline*}
-p_2(iik)^2-{^{s_1\!}}p_2(iik)q_1(iik)p_2(iik)^2
- (\psi_2^2){^{s_2}}q_2(iki){^{s_2}}p_1(iki)q_2(iik)
\\=
-p_2(iik)
-{^{s_1\!}}p_2(iik)p_2(iik)q_1(iik)
+ (\psi_1^2){^{s_1\!}}q_1(iik){^{s_1\!}}p_2(iik)q_1(iik).
\end{multline*}
By (\ref{R4}) the third term on the right is zero.
By (\ref{EUs}) 
the third term on the left equals ${^{s_1\!}}p_2(iik)(1-p_2(iik)^2)$.
Making these substitutions
and replacing $q_1(iik)$ by $(1+y_2-y_1)$, 
the desired equality follows from (\ref{PS2}).

\subsubsection*{Case 3: $i\neq j=k$} This case is similar to Case 2, so we skip it.

\subsubsection*{Case 4: $i=k\neq j$}
Using the equalities 
$p_1(iij)=p_2(jii)=1$, and commuting as usual, (\ref{ELHS}) becomes
\begin{align*}
&-p_2(iji)p_1(iji)p_2(iji)+ \psi_2q_2(iji)p_1(iji)p_2(iji)
+\psi_1q_1(iji)p_2(iji)
\\
&- \psi_2\psi_1{^{s_1\!}}q_2(jii)q_1(iji)p_2(iji)
+\psi_2{^{s_2}} p_2(iij)q_2(iji)
- (\psi_2^2){^{s_2}}q_2(iij)q_2(iji)
\\
&- \psi_1\psi_2{^{s_2s_1\!}}p_2(iij){^{s_2}}q_1(iij)q_2(iji)- \psi_2{^{s_2}}\partial_1(p_2(iij)){^{s_2}}q_1(iij)q_2(iji)
\\
&+ \psi_2\psi_1\psi_2{^{s_2s_1\!}}q_2(iij){^{s_2}}q_1(iij)q_2(iji) + 
(\psi_2^2){^{s_2}}\partial_1(q_2(iij)){^{s_2}}q_1(iij)q_2(iji),
\end{align*}
and (\ref{ERHS}) becomes
\begin{align*}
&-p_1(iji)p_2(iji)p_1(iji)
+ \psi_1q_1(iji)p_2(iji)p_1(iji)+\psi_2q_2(iji)p_1(iji)
\\&- \psi_1\psi_2{^{s_2}}q_1(iij)q_2(iji)p_1(iji)
+ \psi_1{^{s_1\!}}p_1(jii)q_1(iji)
- (\psi_1^2){^{s_1\!}}q_1(jii)q_1(iji)
\\& - \psi_2\psi_1{^{s_1s_2}}p_1(jii){^{s_1\!}}q_2(jii)q_1(iji)- \psi_1{^{s_1}}\partial_2(p_1(jii)){^{s_1\!}}q_2(jii)q_1(iji)
\\& + \psi_1\psi_2\psi_1{^{s_1s_2}}q_1(jii){^{s_1\!}}q_2(jii)q_1(iji)+ (\psi_1^2){^{s_1}}\partial_2(q_1(jii)){^{s_1\!}}q_2(jii)q_1(iji).
\end{align*}
By (\ref{R7}), we have that 
$\psi_1\psi_2\psi_1 - \psi_2\psi_1\psi_2 = \eps$ where
$\eps := 0$ if $i \nslash j$, $1$ if $i \rightarrow j$,
 $-1$ if $i \leftarrow j$ and
$-2y_2+y_1+y_3$ if $i \rightleftarrows j$.
Hence, in view of (\ref{EQProp2}), 
the $\psi_2\psi_1\psi_2$-term cancels with the $\psi_1\psi_2\psi_1$-term, producing the addition of 
$\eps q_1(iji){^{s_2}}q_1(iij)q_2(iji)$
to the constant term of the second 
expression.
Now let us compare constant terms.
Taking into account (\ref{EUs}) and (\ref{EQProp2}), we need to check that 
\begin{multline*}
-p_2(iji)^2p_1(iji)
-(1-p_2(iji)^2)
+ (\psi_2^2){^{s_2}}\partial_1(q_2(iij)){^{s_2}}q_1(iij)q_2(iji)
\\=
-p_1(iji)^2p_2(iji)
- (1-p_1(iji)^2)
+ (\psi_1^2){^{s_1}}\partial_2(q_1(jii)){^{s_1\!}}q_2(jii)q_1(iji)\\
+\eps q_1(iji){^{s_2}}q_1(iij)q_2(iji).
\end{multline*}
Expanding the $\partial$'s, we rewrite this as 
\begin{multline*}
(p_1(iji)-p_2(iji))(p_1(iji)p_2(iji)-p_1(iji)-p_2(iji))\\
+(\psi_2^2)({^{s_2s_1\!}}q_2(iij)-{^{s_2}}q_2(iij)) {^{s_2}}q_1(iij)q_2(iji)
/(y_1-y_3)\\
-(\psi_1^2)({^{s_1s_2}}q_1(jii)-{^{s_1\!}}q_1(jii))
{^{s_1\!}}q_2(jii)q_1(iji)/(y_1-y_3)\\
-\eps q_1(iji){^{s_2}}q_1(iij)q_2(iji)
=0.
\end{multline*}
Using (\ref{PID1}) and (\ref{EQProp1}), the first term on the left hand side equals
$$
(p_1(iji)^2-p_2(iji)^2){^{s_2}}q_1(iji) / (y_1-y_3).
$$
Using (\ref{EUs}) and (\ref{EQProp2}), the second and third terms equal
\begin{align*}
&(q_1(iji) q_2(iji) \psi_2^2-1+p_2(iji)^2){^{s_2}}q_1(iij) / (y_1-y_3)\\\intertext{and}
-&(q_2(iji) q_1(iji) \psi_1^2-1+p_1(iji)^2){^{s_2}}q_1(jii) / (y_1-y_3),
\end{align*}
respectively.
Now we observe by (\ref{R4}) that
\begin{equation}\label{handy}
\psi_2^2 - \psi_1^2 = \eps(y_1-y_3).
\end{equation}
Making these substitutions, 
it is then easy to verify the required identity.

Comparing the $\psi_1\psi_2$-, $\psi_2\psi_1$-, $\psi_1$-, and $\psi_2$-coefficients 
is relatively routine, using (\ref{PID1}) and
the fact that
${^{s_1}}\partial_2(p_1(jii))=
{^{s_2}}\partial_1(p_2(iij))=p_1(iji)p_2(iji)$.

\subsubsection*{Case 5: $i=j=k$} This case is the most explicit of all since we have the precise formulas for all $p$'s and $q$'s. We leave details to the reader.
\end{proof}

\subsection{\boldmath Proof of the Main Theorem in the degenerate case}\label{spf}
Now we can prove the Main Theorem from the introduction when $q=1$.
By Theorem~\ref{dogen}, there is a 
homomorphism 
\begin{equation}\label{sigma}
\RtoH: R_\al^\La\to H_\al^\La
\end{equation}
mapping all the formal generators $e(\bi)$, $y_r$ and $\psi_r$ to the 
explicit elements of $H_\al^\La$ with the same names. 
To prove that $\RtoH$ is an isomorphism, we construct a
two-sided inverse 
\begin{equation}\label{rho}
\HtoR:H_\al^\La\to R_\al^\La.
\end{equation} 
By Theorem~\ref{thm2}, there is a homomorphism
$\HtoR:H_d \to R^\La_\al$ mapping
the formal generators $x_r$ and $s_r$ of $H_d$ to the elements of $R_\al^\La$ 
with the same names defined in (\ref{ET1}) and (\ref{ET2}). 
We claim that this homomorphism 
factors through the natural surjection 
$H_d\twoheadrightarrow H_\al^\La$ to give the required inverse homomorphism 
(\ref{rho}).
To see this, observe 
using the orthogonality of the $e(\bi)$ and (\ref{ERCyc})
that
\begin{align*}
\HtoR\left(\prod_{i\in I}(x_1-i)^{(\La,\al_i)})\right)&
=\sum_{\bj\in I^\al}
\prod_{i\in I}
(y_1+j_1-i)^{(\La,\al_i)}e(\bj)=0.
\end{align*}
Hence, $\HtoR$ factors through the quotient $H^\La_d$ of $H_d$
from (\ref{ECHA}).
To show that $\HtoR$ factors further through the surjection 
$H_d^\La\twoheadrightarrow H_\al^\La$
defined by multiplication by the block idempotent $e_{\al}$,
we need to show that $\HtoR(e_{\be})=0$ for any $\be \in Q_+$
of height $d$ with $\be \neq \al$. 
Recalling (\ref{bal}), this follows immediately from
the following lemma.

\begin{Lemma}\label{l1}
For any $\bi \in I^d$, we have that
$$
\HtoR(e(\bi))=
\left\{
\begin{array}{ll}
e(\bi) &\hbox{if $\bi\in I^\al$};\\
0 &\hbox{otherwise.}
\end{array}
\right.
$$
\end{Lemma}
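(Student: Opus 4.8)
The plan is to exploit the fact, recalled in $\S$\ref{SId}, that each weight idempotent $e(\bi)\in H^\La_d$ lies in the commutative subalgebra generated by $x_1,\dots,x_d$, equivalently is a polynomial in them. Consequently $\HtoR(e(\bi))$ is that same polynomial evaluated at $\HtoR(x_1),\dots,\HtoR(x_d)$, and therefore lies in the commutative subalgebra $A\subseteq R^\La_\al$ generated by all the $e(\bj)$ ($\bj\in I^\al$) together with $y_1,\dots,y_d$. By Lemma~\ref{LNilY} the $y_r$ are nilpotent, so $A$ is finite dimensional and splits as $A=\bigoplus_{\bj\in I^\al}Ae(\bj)$, where each nonzero summand $Ae(\bj)$ is a local ring with residue field $F$ whose maximal ideal is generated by $y_1e(\bj),\dots,y_de(\bj)$. (Applying $\RtoH$, which sends $e(\bj)$ to the nonzero idempotent $e(\bj)\in H^\La_\al$, one sees that in fact $e(\bj)\neq0$ in $R^\La_\al$ for all $\bj\in I^\al$, though this is not essential.) In particular the primitive idempotents of $A$ are exactly the nonzero $e(\bj)$; these are linearly independent, and every idempotent of $A$ is a sum of a subset of them.

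Next I would identify which subset $\HtoR(e(\bi))$ is. Since $e(\bi)$ is the projection onto a generalized weight space, $(x_r-i_r)^N e(\bi)=0$ in $H^\La_d$ for $N\gg0$, so applying $\HtoR$ gives $(\HtoR(x_r)-i_r)^N\HtoR(e(\bi))=0$ for each $r$. Write $\HtoR(e(\bi))=\sum_{\bj\in S}e(\bj)$ with $S$ a set of indices for which $e(\bj)\neq0$. Using $\HtoR(x_r)=\sum_{\bj\in I^\al}(y_r+j_r)e(\bj)$ and comparing $e(\bj)$-components, the displayed vanishing forces $(y_r+j_r-i_r)e(\bj)$ to be nilpotent in the nonzero local ring $Ae(\bj)$ for every $\bj\in S$ and every $r$; since $y_re(\bj)$ already lies in the maximal ideal, this is possible only if the scalar $j_r-i_r$ is $0$ in $F$, equivalently $j_r=i_r$ in $I$ (the map $I\hookrightarrow F$ being injective in the degenerate case). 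As this holds for all $r$ we get $S\subseteq\{\bi\}$, so $\HtoR(e(\bi))=0$ if $\bi\notin I^\al$, and $\HtoR(e(\bi))\in\{0,e(\bi)\}$ if $\bi\in I^\al$.

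To finish, I would rule out the value $0$ for $\bi\in I^\al$ by a global count. For $\bi\in I^\al$ write $\HtoR(e(\bi))=\eps_\bi e(\bi)$ with $\eps_\bi\in\{0,1\}$; using the previous paragraph for the remaining indices and summing over all $\bi$ (a finite sum, as all but finitely many $e(\bi)$ vanish in $H^\La_d$),
$$
\sum_{\bi\in I^\al}\eps_\bi e(\bi)=\sum_{\bi\in I^d}\HtoR(e(\bi))=\HtoR(1)=1=\sum_{\bi\in I^\al}e(\bi).
$$
Hence $\sum_{\bi\in I^\al}(1-\eps_\bi)e(\bi)=0$, and linear independence of the nonzero $e(\bi)$ forces $\eps_\bi=1$ for every $\bi\in I^\al$, which is exactly the assertion. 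I expect the only slightly delicate point to be the structural description of $A$ in the first paragraph — that it decomposes into local blocks with those maximal ideals — but this is routine commutative algebra given the nilpotency of the $y_r$; the rest of the argument is purely formal.
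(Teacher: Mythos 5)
Your proposal is correct, and the main ingredients overlap with the paper's proof (nilpotency of the $y_r$ from Lemma~\ref{LNilY}, the decomposition coming from (\ref{R1}), and the formula (\ref{ET1}) for $\HtoR(x_r)$), but the route is packaged genuinely differently. The paper uses the module-theoretic characterization of $e(\bi)$ --- it is \emph{the} idempotent in $H^\La_d$ acting as projection onto $M_\bi$ for every finite-dimensional module $M$ --- and simply computes the weight-space decomposition of $R^\La_\alpha$ regarded as an $H^\La_d$-module via $\HtoR$. This gives both inclusions at once (namely that $\HtoR(e(\bi))$ kills each $e(\bj)'R^\La_\alpha$ with $\bj\neq\bi$ \emph{and} acts as the identity on $e(\bi)'R^\La_\alpha$), so the equality $\HtoR(e(\bi))=e(\bi)'$ drops out immediately with no counting. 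You instead stay inside the commutative subalgebra $A$, classify its idempotents via the local decomposition $A=\bigoplus_\bj Ae(\bj)$, use only the ``forward'' relation $(x_r-i_r)^Ne(\bi)=0$ to pin down which primitive idempotents can occur, and then close the gap with the global identity $\sum_\bi\HtoR(e(\bi))=1$. Both are valid; the paper's is shorter precisely because the projector characterization does the work of your counting step. One stylistic caveat: in your final paragraph the conclusion ``$\eps_\bi=1$ for every $\bi$'' is only literally forced when $e(\bi)'\neq 0$ in $R^\La_\alpha$; for those $\bi\in I^\al$ with $e(\bi)'=0$ the desired equality $\HtoR(e(\bi))=e(\bi)'$ holds trivially since both sides are $0$, so it is worth stating that case separately rather than appealing to linear independence (and your parenthetical claim that all $e(\bj)'\neq0$ via $\RtoH$ is not yet available, since $\RtoH$ is not known to be injective at this point and $e(\bj)$ could itself vanish in $H^\La_\alpha$).
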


\begin{proof}
To avoid confusion, let us temporarily denote the idempotent
$e(\bi) \in R^\La_\alpha$ instead by $e(\bi)'$.
Recall that $e(\bi) \in H^\La_d$ is the 
idempotent characterized by the property that 
$e(\bi)M=M_\bi$ for any finite dimensional left $H_d^\La$-module $M$. 
The homomorphism $\HtoR$ makes $R_\al^\La$ into a finite 
dimensional left $H_d^\La$-module. 
By (\ref{ET1}),
Lemma~\ref{LNilY}
and the relation (\ref{R1}),
the weight space $(R_\al^\La)_\bi$ 
is precisely $e(\bi)' R_\al^\La$ 
if $\bi\in I^\al$ and zero otherwise. 
Hence $\HtoR(e(\bi))$ is an idempotent in $R^\La_\al$
that projects $R^\La_\alpha$ onto $e(\bi)' R^\La_\alpha$
if $\bi \in I^\alpha$ and is zero if $\bi \notin I^\alpha$.
Hence $\HtoR(e(\bi)) = e(\bi)'$ if $\bi \in I^\alpha$
and $\HtoR(e(\bi)) = 0$ otherwise.
\end{proof}

This completes the definition of the homomorphism (\ref{rho}).
To finish the proof of the Main Theorem in the degenerate case
it remains to check that
$\RtoH$ and $\HtoR$ are two-sided inverses.
This follows by the final lemma.

\begin{Lemma}\label{l2} 
We have that 
$\HtoR\circ\RtoH= \id_{R_{\al}^\La}$
and 
$\RtoH\circ\HtoR= \id_{H_\al^\La}$.
\end{Lemma}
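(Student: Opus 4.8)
The plan is to verify the two composite identities by checking them on the respective algebra generators, exploiting the explicit formulas that define $\RtoH$ and $\HtoR$. Since $R_\al^\La$ is generated by the Khovanov-Lauda elements $e(\bi)$, $y_r$, $\psi_r$ and $H_\al^\La$ is generated by the degenerate Hecke elements $e(\bi)$, $x_r$, $s_r$ (the latter because $H_d$ maps onto $H_\al^\La$ and the $e(\bi)$ lie in the subalgebra generated by the $x_r$), it suffices to chase each generator around the two triangles.

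First I would treat $\RtoH\circ\HtoR=\id_{H_\al^\La}$. On $e(\bi)$ this is immediate from Lemma~\ref{l1} together with the fact that $\RtoH$ sends the formal $e(\bi)\in R_\al^\La$ to the genuine idempotent $e(\bi)\in H_\al^\La$. On $x_r$, by definition $\HtoR(x_r)=\sum_{\bi}(y_r+i_r)e(\bi)$ in $R_\al^\La$, and applying $\RtoH$ sends this to $\sum_{\bi}(x_r-i_r+i_r)e(\bi)=x_r$ using formula (\ref{jon}). On $s_r$, $\HtoR(s_r)=\sum_{\bi}(\psi_r q_r(\bi)-p_r(\bi))e(\bi)$, and $\RtoH$ carries each factor to the element of $H_\al^\La$ of the same name; since in $H_\al^\La$ we have $\psi_r=\sum_{\bi}(s_r+p_r(\bi))q_r(\bi)^{-1}e(\bi)$ by (\ref{ECoxKL}), one computes $\RtoH(\HtoR(s_r))e(\bi)=(s_r+p_r(\bi))q_r(\bi)^{-1}q_r(\bi)e(\bi)-p_r(\bi)e(\bi)=s_r e(\bi)$, so summing over $\bi$ gives $s_r$ back. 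The only subtlety is bookkeeping with the power series $q_r(\bi)$, $p_r(\bi)$: one must note that applying $\RtoH$ to these power series (regarded via the homomorphism (\ref{Rhom})) yields the corresponding power series in the $y_r$ of $H_\al^\La$, which is consistent because $\RtoH(y_r)=y_r$.

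Next I would do $\HtoR\circ\RtoH=\id_{R_\al^\La}$. On $e(\bi)$ this is Lemma~\ref{l1} again. On $y_r$, $\RtoH(y_r)=\sum_{\bi}(x_r-i_r)e(\bi)$ in $H_\al^\La$, and applying $\HtoR$ and using $\HtoR(x_r)=\sum_{\bj}(y_r+j_r)e(\bj)$ together with Lemma~\ref{l1} gives $\sum_{\bi}(y_r+i_r-i_r)e(\bi)=y_r$. On $\psi_r$, $\RtoH(\psi_r)=\sum_{\bi}(s_r+p_r(\bi))q_r(\bi)^{-1}e(\bi)$ in $H_\al^\La$, so $\HtoR(\psi_r)e(\bi)=(\HtoR(s_r)+p_r(\bi))q_r(\bi)^{-1}e(\bi)=((\psi_r q_r(\bi)-p_r(\bi))+p_r(\bi))q_r(\bi)^{-1}e(\bi)=\psi_r e(\bi)$, and summing recovers $\psi_r$. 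Again the mild care needed is that $\HtoR$ applied to the $y_r$-power series $p_r(\bi),q_r(\bi)$ returns the same-named power series in $R_\al^\La$, which is immediate since $\HtoR(y_r)=y_r$ by the computation just made, and that $q_r(\bi)^{-1}e(\bi)$ makes sense on both sides since the $y_r$ are nilpotent by Lemma~\ref{LNilY}.

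I do not expect a genuine obstacle here; the statement is essentially a formal consequence of the definitions once one checks generators. The one place to be careful is ensuring that the two homomorphisms are actually well defined as maps \emph{between the cyclotomic quotients} (so that $\HtoR(e_\be)=0$ for $\be\neq\al$, which was already established via Lemma~\ref{l1}, and that all the inverses $q_r(\bi)^{-1}e(\bi)$ exist in both algebras), and that passing power series back and forth through $\RtoH$ and $\HtoR$ is compatible with the homomorphisms $F[[y_1,\dots,y_d]]\to H_\al^\La$ and $F[[y_1,\dots,y_d]]\to R_\al^\La$ of $\S$\ref{SSBN}. Granting that, the verification on each of the finitely many generators is a short substitution, and since both composites fix a generating set they are the respective identity maps.
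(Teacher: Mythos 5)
Your proof is correct and follows essentially the same strategy as the paper's: check both composites on the respective generating sets (the Khovanov-Lauda generators of $R^\La_\alpha$ and the images of $x_r,s_r$ in $H^\La_\alpha$), invoke Lemma~\ref{l1} for the idempotents, and substitute the defining formulas (\ref{jon}), (\ref{ECoxKL}), (\ref{ET1}), (\ref{ET2}). The paper simply states the $s_r$ and $\psi_r$ cases are ``similar calculations'' while you spell them out; your attention to the compatibility of the power-series homomorphisms under $\RtoH$ and $\HtoR$ is a reasonable point of care but does not change the argument.
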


\begin{proof}
Since both $\HtoR$ and $\RtoH$ are algebra homomorphisms,
it suffices to check that $\HtoR\circ\RtoH$
is the identity on each generator (\ref{kl0})
of $R^\La_\al$ and
that $\RtoH\circ \HtoR$ is the identity on each generator
(\ref{hg0}) of $H^\La_\al$.
The fact that
$\HtoR(\RtoH(e(\bi))) = e(\bi)$ follows from Lemma~\ref{l1}.
Then using (\ref{ET1}) and (\ref{jon}) we get that
$$
\RtoH(\HtoR(x_r)) =
\sum_{\bi \in I^\alpha}
\RtoH((y_r+i_r) e(\bi))
=
\sum_{\bi \in I^\alpha} x_r e(\bi).
$$
Since $\sum_{\bi \in I^\alpha} e(\bi)$
is the identity  $e_{\alpha} \in H^\La_\alpha$, this 
implies $\RtoH(\HtoR(x_r)) = x_r$.
The proof that
$\RtoH(\HtoR(s_r)) = s_r$ is a 
similar calculation using
(\ref{ET2}) and (\ref{ECoxKL}).
Finally one checks that
$\HtoR(\RtoH(y_r)) = y_r$ and
$\HtoR(\RtoH(\psi_r)) = \psi_r$
using the same formulae.
\end{proof}

\section{The non-degenerate case}

\subsection{\boldmath Blocks of   cyclotomic Hecke algebras}\label{QSId}
Now we assume that $q \neq 1$ and 
let $H_d$ be the affine Hecke algebra over $F$
at this parameter.
Thus $H_d$ has generators
\begin{equation}\label{hg1}
\{X_1^{\pm 1}, \dots, X_d^{\pm 1}\}\cup\{T_1,\dots,T_{d-1}\}
\end{equation}
subject to the following relations for all admissible indices:
\begin{align}
\label{QPoly}
X_r^{\pm1} X_s^{\pm1}&=X_s^{\pm1} X_r^{\pm1}, \hspace{7.2mm} X_rX_r^{-1}=1;\\
T_r X_r T_r &= qX_{r+1},
\qquad\qquad
T_r X_s = X_{s}T_r \hspace{6mm}\text{if $s \neq r,r+1$};\label{QDAHA}\\
\label{QECoxeterQuad}
T_r^2 &= (q-1) T_r +q;\\
T_rT_{r+1}T_r&=T_{r+1}T_rT_{r+1},
\qquad
T_rT_s=T_sT_r \hspace{7mm}\text{if $|r-s|>1$}.\label{QCoxeter}
\end{align}
The following relations are easy consequences of the 
defining relations:
\begin{align*}
T_r^{-1}&=q^{-1}T_r+q^{-1}-1;
\\
X_rT_r&=T_rX_{r+1}+(1-q)X_{r+1},\quad X_r^{-1}T_r=T_rX_{r+1}^{-1}+(q-1)X_{r}^{-1};
\\
X_{r+1}T_r&=T_rX_{r}+(q-1)X_{r+1},\hspace{6.5mm} X_{r+1}^{-1}T_r=T_rX_{r}^{-1}+
(1-q)X_{r}^{-1}.
\end{align*}
We'll use these repeatedly without further note.

Given $\La\in P_+$ of level $\ell$ as usual,
denote by the same letters $X_1^{\pm1},\dots,X_d^{\pm1}$ and 
$T_1,\dots,T_{d-1}$ the images of the generators in the cyclotomic 
quotient $H_d^\La$ from (\ref{ECHA}). 
As goes back to \cite{AK}, we have that
\begin{equation}\label{qdim}
\dim H_d^\La =\ell^d d!,
\end{equation} 
just like in the degenerate case.
In case $\ell = 1$, $H_d^\La$ is the usual finite Hecke algebra
associated to the symmetric group $S_d$.

Let $M$ be a finite dimensional $H_d^\La$-module. By 
\cite[Lemma 4.7]{G}, the eigenvalues of each $X_r$ on $M$ are of the 
form $q^i$ for $i\in I$. So 
$M$ decomposes as a direct sum 
$M = \bigoplus_{\bi \in I^d} M_\bi$
of its {\em weight spaces} 
$$
M_\bi := \{v \in M\:|\:(X_r-q^{i_r})^N v = 0
\text{ for all $r=1,\dots,d$ and $N \gg 0$}\}.
$$
As in (\ref{ESpan}), we have that
\begin{equation}\label{QESpan}
T_r(M_\bi)\subseteq M_\bi+M_{s_r\cdot\bi}.
\end{equation}
Considering the weight space decomposition 
of the regular module as in $\S$\ref{SId}, 
we deduce that 
there is a system 
$\{e(\bi)\:|\:\bi \in I^d\}$
of mutually orthogonal idempotents in $H_d^\La$
such that $e(\bi) M = M_\bi$ for each finite dimensional module $M$. 
Each $e(\bi)$
lies in the commutative subalgebra
generated by $X_1^{\pm 1},\dots,X_d^{\pm 1}$,
all but finitely many of the $e(\bi)$'s are zero,
and their sum is the identity element in $H_d^\La$. 

As goes back to Bernstein, 
the center $Z(H_d)$ consists of all symmetric polynomials
in $X_1^{\pm 1},\dots,X_d^{\pm 1}$; see e.g. \cite[Proposition 4.1]{G}. 
So, given also $\alpha \in Q_+$ of height $d$, the 
idempotent $e_{\al}$ from (\ref{bal})
is either zero or it is a central idempotent
in $H_d^\La$. In fact, it follows from \cite{LM} that the non-zero 
$e_{\al}$'s are precisely the primitive central
idempotents of $H_d^\La$ (although we
don't ever use this).
So again the algebra $H_\alpha^\La := e_{\alpha} H_d^\La$ 
from (\ref{fe})
is either zero or it is a block of  $H_d^\La$.
The 
commutative
subalgebra of $H^\La_\al$ generated by $X_1^{\pm1},\dots,X_d^{\pm 1}$ will be denoted $\Pol_\al^\La$. 
Each $\Pol_\al^\La e(\bi)$ is an algebra with identity $e(\bi)$. 
If $X \in \Pol^\La_\al$ is such that $X e(\bi)$ is invertible 
in $\Pol_\al^\La e(\bi)$, we write $X^{-1}e(\bi)$ for its inverse 
in $\Pol_\al^\La e(\bi)$. For example, set 
\begin{equation}
X_{r,s}:=X_rX_s^{-1}. 
\end{equation}
Then $(1-X_{r,s})^{-1}e(\bi)$ makes sense if $i_r\neq i_{s}$. 

\subsection{\boldmath Intertwining elements $\Phi_r$} 
For $1\leq r\leq d$, set
\begin{equation}
\Phi_r:=T_r+\sum_{\substack{\bi\in I^\al\\i_r\neq i_{r+1}}}
(1-q)(1-X_{r,r+1})^{-1}e(\bi)
+\sum_{\substack{\bi\in I^\al\\i_r= i_{r+1}}}e(\bi).
\end{equation}
This is a slightly modified version of the usual intertwining elements
as in \cite[\S2]{Ro} and \cite[\S5.1]{Lu}:
\begin{equation}\label{QERelation}
\Theta_r:=T_r(1-X_{r,r+1})+1-q.
\end{equation}
The elements $\Theta_r$ have the following nice properties 
which are checked from the relations; see
\cite[Proposition 5.2]{Lu}:
\begin{align}
\label{QEOldI4}
\Theta_r^2&=
(1-qX_{r+1,r})(1-qX_{r,r+1});\\
\label{QEOldI1}
\Theta_rX_{r+1}^{\pm1}&=
X_r^{\pm1}\Theta_r,\hspace{17.5mm}
\Theta_rX_s=X_s\Theta_r\hspace{6mm}\text{if $s\neq r,r+1$};
\\
\label{QEOldI6}
\Theta_r\Theta_{r+1}\Theta_r&=\Theta_{r+1}\Theta_r\Theta_{r+1},
\hspace{8.3mm} 
\Theta_r\Theta_s=\Theta_s\Theta_r\hspace{6.2mm}\text{if $|r-s|>1$}.
\end{align}
The elements $\Phi_r$ inherit similar properties:

\begin{Lemma}
The intertwining elements satisfy the following relations for all $\bi \in I^\alpha$ and admissible $r,s$:
\begin{align}
\label{QEI0}
\Phi_re(\bi)&=e(s_r{\cdot}\bi)\Phi_r;\\
\label{QEI1}
\Phi_rX_s&=X_s\Phi_r\hspace{52.8mm}\text{if $s\neq r,r+1$};\\
\label{QEI5}
\hspace{12.1mm}\Phi_r\Phi_s&=\Phi_s\Phi_r\hspace{53.1mm}\text{if $|r-s|>1$};\\
\label{QEI2}
\Phi_rX_{r+1}e(\bi)&
=\begin{cases}
X_r\Phi_re(\bi) & \text{if $i_r\neq i_{r+1}$},\\
(X_r\Phi_r+qX_{r+1}-X_r)e(\bi) \hspace{11.6mm}&\text{if $i_r= i_{r+1}$};
\end{cases}\\
\label{QEI3}
X_{r+1}\Phi_re(\bi)&=\begin{cases}
\Phi_rX_re(\bi) & \text{if $i_r\neq i_{r+1}$},\\
(\Phi_rX_r+qX_{r+1}-X_r)e(\bi)\hspace{11.6mm} &\text{if $i_r= i_{r+1}$};
\end{cases}\\
\label{QEI4}
\Phi_r^2e(\bi)&=\begin{cases}
\frac{(X_{r+1}-qX_r)(X_r-qX_{r+1})}{(X_{r+1}-X_{r})(X_{r}-X_{r+1})}e(\bi)\hspace{13.2mm} & \text{if $i_r\neq i_{r+1}$},\\
(1+q)\Phi_re(\bi) &\text{if $i_r= i_{r+1}$};
\end{cases}\end{align}\begin{align}
\label{QEI6}
\:\:\Phi_r\Phi_{r+1}\Phi_r
&=\begin{cases}
(\Phi_{r+1}\Phi_r\Phi_{r+1}+q\Phi_{r}-q\Phi_{r+1})e(\bi) \!\!& \text{if $i_r=i_{r+2}=i_{r+1}$},\\
(\Phi_{r+1}\Phi_r\Phi_{r+1}+Z_r
)e(\bi) & \text{if $i_r=i_{r+2}\neq i_{r+1}$},\\
\Phi_{r+1}\Phi_r\Phi_{r+1}e(\bi) & \text{otherwise},
\end{cases}
\end{align}
where $Z_r$ denotes $(1-q)^2\frac{(X_rX_{r+2}-X_{r+1}^2)(X_r X_{r+1}-qX_{r+1}X_{r+2})}{(X_r-X_{r+1})^2(X_{r+1}-X_{r+2})^2}$.
\end{Lemma}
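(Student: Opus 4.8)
The plan is to mirror the proof of the degenerate analogue, Lemma~\ref{int}, transporting each step to the non-degenerate setting by means of the known properties (\ref{QEOldI4})--(\ref{QEOldI6}) of the classical intertwiners $\Theta_r$. The starting observation is that, whenever $i_r\neq i_{r+1}$, one has $\Phi_r e(\bi)=\Theta_r(1-X_{r,r+1})^{-1}e(\bi)$, since $\Theta_r(1-X_{r,r+1})^{-1}=T_r+(1-q)(1-X_{r,r+1})^{-1}$; this lets us import facts about $\Theta_r$ directly. To prove (\ref{QEI0}) I would argue exactly as in the degenerate case: it suffices to compare the actions of the two sides on a weight space $M_\bj$ of an arbitrary finite dimensional $H^\La_\al$-module $M$; if $j_r=j_{r+1}$ use (\ref{QESpan}), and if $j_r\neq j_{r+1}$ use $\Phi_r e(\bj)=\Theta_r(1-X_{r,r+1})^{-1}e(\bj)$ together with (\ref{QEOldI1}) to see that $\Phi_r M_\bj\subseteq M_{s_r\cdot\bj}$. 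The relations (\ref{QEI1}) and (\ref{QEI5}) are then immediate from (\ref{QEOldI1}), (\ref{QEOldI6}) and the fact that the $e(\bi)$ lie in the commutative subalgebra $\Pol^\La_\al$; and (\ref{QEI2}), (\ref{QEI3}) follow by expanding the definition of $\Phi_r$, using (\ref{QEOldI1}) in the case $i_r\neq i_{r+1}$ and a short direct computation with the $T$-relations in the case $i_r=i_{r+1}$.

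For (\ref{QEI4}), when $i_r=i_{r+1}$ we have $\Phi_r e(\bi)=(T_r+1)e(\bi)$, and the quadratic relation (\ref{QECoxeterQuad}) gives $(T_r+1)^2=(q+1)(T_r+1)$, whence $\Phi_r^2 e(\bi)=(1+q)\Phi_r e(\bi)$. When $i_r\neq i_{r+1}$, I would write $\Phi_r^2 e(\bi)=\Theta_r(1-X_{r,r+1})^{-1}\Theta_r(1-X_{r,r+1})^{-1}e(\bi)$, move the inner $\Theta_r$ to the left past the rational function using (\ref{QEOldI1}) (the swap $X_r\leftrightarrow X_{r+1}$ turns $(1-X_{r,r+1})^{-1}$ into $(1-X_{r+1,r})^{-1}$), obtaining $\Theta_r^2(1-X_{r+1,r})^{-1}(1-X_{r,r+1})^{-1}e(\bi)$, substitute (\ref{QEOldI4}), and then clear the powers of $X_r,X_{r+1}$ from numerator and denominator to arrive at the expression in (\ref{QEI4}).

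The substantive statement is the braid-type identity (\ref{QEI6}). As in the degenerate case I would reduce at once to $r=1$, $d=3$, write $i:=i_1$, $j:=i_2$, $k:=i_3$, suppress the trailing $e(\bi)$, and split into five cases according to which of $i,j,k$ coincide. In Case~1 ($i,j,k$ pairwise distinct) every $\Phi$ equals $\Theta$ times an invertible rational function, so $\Phi_2\Phi_1\Phi_2$ and $\Phi_1\Phi_2\Phi_1$ are computed by commuting all the $X$'s to one side via (\ref{QEOldI1}) and invoking the $\Theta$-braid relation (\ref{QEOldI6}); one then checks that the two rational prefactors agree. In Cases~2 and~3 ($i=j\neq k$ or $i\neq j=k$) exactly one of the three factors in each triple product has the form $T_r+1$ rather than a $\Theta$-multiple; moving all the $X$'s to one side using (\ref{QEOldI1}), (\ref{QEOldI6}) and the commutations of $T_r$ with the $X_s$, both sides are seen to coincide. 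Case~5 ($i=j=k$) becomes the purely Hecke-algebraic statement $(T_1+1)(T_2+1)(T_1+1)=(T_2+1)(T_1+1)(T_2+1)+q(T_1+1)-q(T_2+1)$, which follows from the braid and quadratic relations in the Hecke algebra of $S_3$.

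The main obstacle is Case~4 ($i=k\neq j$), exactly as in the degenerate proof. Here $\Phi_1 e(\bi)$ and $\Phi_2 e(\bi)$ are $\Theta$-multiples, but the middle factor in each of $\Phi_2\Phi_1\Phi_2$ and $\Phi_1\Phi_2\Phi_1$ contributes a $T_r+1$, so neither side is a pure $\Theta$-product: expanding the $T_r+1$ and commuting produces, besides a $\Theta$-braid term that cancels by (\ref{QEOldI6}), a residual rational function on each side. Collecting these residuals and identifying their difference with the stated $Z_r$ (and with $0$ in the remaining subcases) requires expanding both triple products into $T$-monomials weighted by rational functions in $X_1,X_2,X_3$, repeatedly using the commutations of $T_r$ with $(1-X_{r,s})^{-1}$ and the value $\Theta_r^2=(1-qX_{r+1,r})(1-qX_{r,r+1})$, and then reducing everything over a common denominator. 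This computation, though mechanical, is lengthy, and matching the outcome with the precise form of $Z_r$ is the only genuinely delicate point; everything else reduces to facts already established for the $\Theta_r$.
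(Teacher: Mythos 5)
Your proposal matches the paper's proof essentially step for step: the weight-space argument for (\ref{QEI0}) transported from Lemma~\ref{int}, deduction of (\ref{QEI1})--(\ref{QEI3}) from the known $\Theta_r$-relations, the same two-case treatment of (\ref{QEI4}) via $(T_r+1)^2=(q+1)(T_r+1)$ and $\Theta_r^2$, and reduction of (\ref{QEI6}) to $r=1$, $d=3$ with the familiar five cases. The paper itself records no more detail for Case~4 of (\ref{QEI6}) than you do, describing it only as a lengthy but routine expansion.
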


\begin{proof}
The first equation (\ref{QEI0}) follows by (\ref{QESpan})
and (\ref{QEOldI1})
using the fact that $\Phi_r e(\bi) = \Theta_r (1-X_{r,r+1})^{-1} e(\bi)$
for $i_r \neq i_{r+1}$, in the same way that 
(\ref{EI0}) was verified in the proof of Lemma~\ref{int}.
The properties (\ref{QEI1}) and (\ref{QEI5}) are clear from definitions. 
The properties (\ref{QEI2}) and 
(\ref{QEI3}) come easily from (\ref{QEOldI1}) and relations in $H_\al^\La$. 
For (\ref{QEI4}), if $i_r=i_{r+1}$, we have
$\Phi_r^2e(\bi)=(T_r+1)^2e(\bi)=(1+q)\Phi_re(\bi)$.
Now suppose that 
$i_r\neq i_{r+1}$. 
Then, using (\ref{QEOldI4}) and (\ref{QEOldI1}), we have that
\begin{align*}
\Phi_r^2e(\bi)
=&\Theta_r(1-X_{r,r+1})^{-1}\Theta_r(1-X_{r,r+1})^{-1}e(\bi)
\\
=&\Theta_r^2(1-X_{r+1,r})^{-1}(1-X_{r,r+1})^{-1}e(\bi)
\\
=&(1-qX_{r+1,r})(1-qX_{r,r+1})(1-X_{r+1,r})^{-1}(1-X_{r,r+1})^{-1}e(\bi).
\end{align*}
Finally, for the proof of (\ref{QEI6}), we
assume without loss of generality that $r=1$ and  consider five cases
like in the proof of Lemma~\ref{int}.
Case 4 involves making a lengthy but routine
expansion.
\end{proof}

\subsection{\boldmath Khovanov-Lauda generators of $H^\La_\al$
in the non-degenerate case}\label{QSKLGens}
Set 
\begin{equation}\label{QEPolKL}
y_r:=\sum_{\bi\in I^\al}(1 - q^{-i_r}X_r)e(\bi).
\end{equation}
Note that $y_1,\dots,y_d\in \Pol_\al^\La$ are nilpotent, so we are in
the situation of (\ref{conv})
and can interpret any power series in $F[[y_1,\dots,y_d]]$
as an element of $\Pol^\La_\al$.
It is convenient also to set
\begin{equation}\label{X}
y_r(\bi) := q^{i_r}(1-y_r) \in F[[y_1,\dots,y_d]],
\end{equation}
so that 
\begin{equation}\label{Xy}
X_r e(\bi) = y_r(\bi) e(\bi)
\end{equation} 
for each $\bi \in I^\alpha$.
We note further that
\begin{align}\label{jona}
y_{r+1}-q y_r(\bi) &= q^{i_{r+1}}(y_r-y_{r+1})\qquad\text{if $i_r \rightarrow i_{r+1}$ or $i_r \rightleftarrows i_{r+1}$},\\
y_r(\bi)-q y_{r+1}(\bi) &= q^{i_{r}}(y_{r+1}-y_{r})\qquad\!\quad\text{if $i_r \leftarrow i_{r+1}$ or $i_r \rightleftarrows i_{r+1}$},\label{jonb}\\
{^{s_r\!}}y_r(s_r{\cdot}\bi)
&=
y_{r+1}(\bi)
\hspace{22mm}\text{for all $\bi$.}\label{jonc}
\end{align}
For any $1\leq r<d$ and $\bi\in I^\al$
we define power series
$P_r(\bi) \in F[[y_r,y_{r+1}]]$ by setting
\begin{equation}
\label{QEP}
P_r(\bi):=\begin{cases}
1 & \text{if $i_r=i_{r+1}$},\\
(1-q)\left(1-y_r(\bi) y_{r+1}(\bi)^{-1}\right)^{-1} & \text{if $i_r\neq i_{r+1}$}.
\end{cases}
\end{equation}
The following facts are easy to check:
\begin{align}
P_r(\bi)+{^{s_r\!}}P_r(s_r{\cdot}\bi) &= 1-q\hspace{40.3mm}\text{if $i_r \neq i_{r+1}$};\label{qr}\\
{^{s_r\!}}P_{r+1}(s_r{\cdot}\bi)&={^{s_{r+1}\!}}P_r(s_{r+1}{\cdot}\bi)
\hspace{26mm}\text{for any $\bi$};\label{QEP3}
\\
\label{QEPE}
P_r(\bi)e(\bi)&=\begin{cases}
e(\bi) & \text{if $i_r=i_{r+1}$},\\
(1-q)(1-X_{r,r+1})^{-1}e(\bi) & \text{if $i_r\neq i_{r+1}$};
\end{cases}
\\
\label{QEPPhi}
\Phi_r&=\sum_{\bi\in I^\al}(T_r+P_r(\bi))e(\bi).  
\end{align}
By explicitly expanding both sides in terms of $y_1,\dots,y_d$, one checks that
\begin{equation}\label{QEPreUs}
(1-P_r(\bi))(q+P_r(\bi))
=\frac{(y_{r+1}(\bi)-qy_r(\bi))
(y_r(\bi)-qy_{r+1}(\bi))}{(y_{r+1}(\bi)-y_{r}(\bi))(y_{r}(\bi)-y_{r+1}(\bi))}
\end{equation}
for all $\bi \in I^\alpha$ with $i_r \neq i_{r+1}$.
Note the denominator on the right hand side of (\ref{QEPreUs})
is a unit in $F[[y_r,y_{r+1}]]$ so this makes sense.

Fix from now on a choice of invertible elements
$Q_r(\bi)\in F[[y_r,y_{r+1}]]$
with the following properties:
\begin{align}
\label{QEQProp1}
Q_r(\bi)&=1-q+qy_{r+1}-y_r\hspace{14.5mm}\text{if $i_r=i_{r+1}$};\\
\label{QEQProp2}
Q_r(\bi){^{s_r\!}}Q_r(s_r{\cdot}\bi)&=
\left\{
\begin{array}{ll}
(1-P_r(\bi))(q+P_r(\bi))&\text{if $i_r \nslash i_{r+1}$},\\
\frac{(1-P_r(\bi))(q+P_r(\bi)))}{y_{r+1}-y_r}
&\hbox{if $i_r\rightarrow i_{r+1}$},
\\
\frac{(1-P_r(\bi))(q+P_r(\bi))}{y_r - y_{r+1}}
 &\hbox{if $i_r\leftarrow i_{r+1}$},
\\
\frac{(1-P_r(\bi))(q+P_r(\bi))}{(y_{r+1}-y_r)(y_r-y_{r+1})}&\hbox{if $i_r\rightleftarrows i_{r+1}$};
\end{array}
\right.\\
\label{QEQProp4}
{^{s_r}}Q_{r+1}(s_{r+1}s_r{\cdot}\bi)&={^{s_{r+1}}}Q_r(s_rs_{r+1}{\cdot}\bi) \hspace{18mm}
\text{for any $\bi$}.
\end{align}
In the fractions on the right hand side of (\ref{QEQProp2}),
the fact that the denominators divide the numerators follows because
of (\ref{QEPreUs}) and (\ref{jona})--(\ref{jonb}).
For example, one could choose
\begin{equation}
Q_r(\bi):=
\left\{
\begin{array}{ll}
1-q+qy_{r+1}-y_r&\text{if $i_r = i_{r+1}$},\\
(y_r(\bi)-qy_{r+1}(\bi)))/(y_{r}(\bi)-y_{r+1}(\bi)) &\hbox{if $i_r \nslash i_{r+1}$},\\
(y_r(\bi)-qy_{r+1}(\bi))/(y_r(\bi)-y_{r+1}(\bi))^2 &\hbox{if $i_r\rightarrow i_{r+1}$},\\
q^{i_r} &\hbox{if $i_r\leftarrow i_{r+1}$},\\
q^{i_r}/(y_{r}(\bi)-y_{r+1}(\bi)) &\hbox{if $i_r \rightleftarrows i_{r+1}$},
\end{array}
\right.
\end{equation}
which satisfy (\ref{QEQProp2}) by another application of (\ref{QEPreUs})
and (\ref{jona})--(\ref{jonc}).

Now, the {\em Khovanov-Lauda generators} in the non-degenerate case are
\begin{equation}\label{Qklg}
\{e(\bi)\mid\bi\in I^\al\}\cup\{y_1,\dots,y_d\}\cup\{\psi_1,\dots,\psi_{d-1}\},
\end{equation}
where $y_r$ is the element defined by (\ref{QEPolKL}) and 
\begin{equation}\label{QQCoxKL}
\psi_r:=\sum_{\bi\in I^\al}\Phi_rQ_r(\bi)^{-1}e(\bi) 
=\sum_{\bi\in I^\al}(T_r+P_r(\bi))Q_r(\bi)^{-1}e(\bi).
\end{equation}

\begin{Theorem}\label{Qgen0}
The elements (\ref{Qklg})
of $H^\La_\alpha$ satisfy the defining relations (\ref{ERCyc})--(\ref{R7})
of the cyclotomic Khovanov-Lauda algebra.
\end{Theorem}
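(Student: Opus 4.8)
The plan is to follow the proof of Theorem~\ref{dogen} step by step, verifying the relations (\ref{ERCyc})--(\ref{R7}) one at a time for the concrete elements (\ref{Qklg}), now with the $q$-deformed power series of $\S$\ref{QSKLGens} and the intertwiners $\Phi_r$ in place of $\phi_r$. First I would dispose of the easy relations. For the cyclotomic relation (\ref{ERCyc}) we use the defining relation $\prod_{i\in I}(X_1-q^i)^{(\La,\al_i)}=0$ of $H_d^\La$: since $(X_1-q^i)e(\bi)$ is a unit in $\Pol_\al^\La e(\bi)$ whenever $i\neq i_1$, we get $(X_1-q^{i_1})^{(\La,\al_{i_1})}e(\bi)=0$, and since $y_1e(\bi)=-q^{-i_1}(X_1-q^{i_1})e(\bi)$ this gives (\ref{ERCyc}). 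Relation (\ref{R1}) is already known, and (\ref{R2PsiE}), (\ref{R3Y}), (\ref{R3YPsi}), (\ref{R3Psi}) follow from the commutativity of $\Pol_\al^\La$ together with the properties (\ref{QEI0}), (\ref{QEI1}), (\ref{QEI5}) of the $\Phi_r$, exactly as in the degenerate case.

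Next I would treat the block (\ref{R6}), (\ref{R5}), (\ref{R4}). For (\ref{R5}), rewrite $y_{r+1}\psi_re(\bi)=(1-q^{-i_r}X_{r+1})\Phi_rQ_r(\bi)^{-1}e(\bi)$ using (\ref{R2PsiE}), then push $X_{r+1}$ through $\Phi_r$ by (\ref{QEI3}): when $i_r\neq i_{r+1}$ the terms collect into $\psi_ry_re(\bi)$, and when $i_r=i_{r+1}$ the extra term is $q^{-i_r}(qX_{r+1}-X_r)e(\bi)=-Q_r(\bi)e(\bi)$ by (\ref{QEQProp1}), producing the required $+1$; relation (\ref{R6}) is the symmetric computation using (\ref{QEI2}). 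Once (\ref{R3YPsi}), (\ref{R6}), (\ref{R5}) are verified for the concrete elements, the divided-difference identity (\ref{EDDR}) becomes available for them, and (\ref{R4}) goes through as in the degenerate case: for $i_r\neq i_{r+1}$, write $\psi_r^2e(\bi)=\Phi_rQ_r(s_r{\cdot}\bi)^{-1}\psi_re(\bi)$, commute the power series past $\psi_r$ with (\ref{EDDR}), apply (\ref{QEI4}) and (\ref{QEPE}) to reach $(1-P_r(\bi))(q+P_r(\bi))\bigl(Q_r(\bi)\,{^{s_r\!}}Q_r(s_r{\cdot}\bi)\bigr)^{-1}e(\bi)$, and then use (\ref{QEPreUs}) and the defining property (\ref{QEQProp2}) of $Q_r(\bi)$ to obtain the right-hand side of (\ref{R4}); the case $i_r=i_{r+1}$ is $\Phi_r^2e(\bi)=(1+q)\Phi_re(\bi)$ combined with (\ref{QEQProp1}).

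Finally, relation (\ref{R7}) is the substantial one. Reducing to $r=1$, $d=3$ and writing $i=i_1$, $j=i_2$, $k=i_3$, I would expand both $\psi_1\psi_2\psi_1e(\bi)$ and $\psi_2\psi_1\psi_2e(\bi)$ by repeatedly applying (\ref{EDDR}) to commute all the power series $Q_r(\bi)^{-1}$ to the right past the $\Phi$'s, so that each becomes $\Phi_1\Phi_2\Phi_1e(\bi)$ (resp.\ $\Phi_2\Phi_1\Phi_2e(\bi)$) times a power series, plus, in some cases, extra divided-difference terms. Splitting into the five cases ($i,j,k$ distinct; $i=j\neq k$; $i\neq j=k$; $i=k\neq j$; $i=j=k$), I would then use (\ref{QEI6}) to relate the two triple products, (\ref{QEQProp4}) to match the power-series coefficients, and check the residual scalar identities --- which are $q$-analogues of (\ref{PID1}), (\ref{PS2}) and the other power-series identities of the degenerate proof --- by expanding the $P_r(\bi)$ from (\ref{QEP}) over a common denominator. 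I expect Case~4 ($i=k\neq j$) to be the main obstacle: there the term $Z_r$ contributed by (\ref{QEI6}) has to be shown, after combining with the divided-difference contributions from both triple products, to reduce exactly to the scalar correction appearing in (\ref{R7}) --- namely $0$, $e(\bi)$, $-e(\bi)$, or $(-2y_{r+1}+y_r+y_{r+2})e(\bi)$ according as $i_r\nslash i_{r+1}$, $i_r\rightarrow i_{r+1}$, $i_r\leftarrow i_{r+1}$, or $i_r\rightleftarrows i_{r+1}$ --- and this will rest on a $q$-deformation of the identity (\ref{PID1}). The remaining case $i=j=k$ reduces to a purely scalar identity in the Hecke algebra, handled as in the degenerate case.
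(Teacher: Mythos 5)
Your proposal follows the same strategy as the paper's proof: dispose of the easy relations, verify (\ref{R5}), (\ref{R6}), (\ref{R4}) via the intertwining identities for $\Phi_r$ and the defining properties of $P_r(\bi)$, $Q_r(\bi)$, then reduce (\ref{R7}) to $r=1$, $d=3$ and five cases, with Case~4 ($i=k\ne j$) and the $Z_r$ term from (\ref{QEI6}) as the crux. Your minor variations --- using (\ref{QEI3}) uniformly for (\ref{R5}) rather than writing $\Phi_re(\bi)=(T_r+1)e(\bi)$ and invoking the Hecke relations directly, and handling the $i_r=i_{r+1}$ case of (\ref{R4}) via $\Phi_r^2e(\bi)=(1+q)\Phi_re(\bi)$ together with (\ref{EDDR}) rather than via the factorization $(T_r+1)(T_r-q)=0$ --- both come to the same thing after a short computation, so this is essentially the paper's argument.
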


\begin{proof}
The checks of (\ref{ERCyc})--(\ref{R3Psi})
go in the same way as in the proof of Theorem~\ref{dogen}; 
these are easy so we omit them.
For (\ref{R5}), we have that
\begin{equation}\label{ESunday2}
y_{r+1} \psi_r e(\bi)=(1-q^{-i_r} 
X_{r+1})\Phi_rQ_r(\bi)^{-1}e(\bi).
\end{equation}
If $i_r\neq i_{r+1}$, 
this equals  
$\Phi_rQ_r(\bi)^{-1}(1-q^{-i_r} X_{r})e(\bi)=\psi_ry_re(\bi)$ by (\ref{QEI3}).
If $i_r=i_{r+1}$, then (\ref{ESunday2}) gives 
\begin{align*}
y_{r+1} \psi_r e(\bi) &=(1-q^{-i_r}X_{r+1})
(T_r+1) Q_r(\bi)^{-1}e(\bi)\\
&=((T_r+1)(1-q^{-i_r}X_r)+q^{-i_r}X_r-q^{1-i_r}X_{r+1})
Q_r(\bi)^{-1}
e(\bi)\\&=(\psi_ry_r+1)e(\bi),
\end{align*}
since $(q^{-i_r} X_r - q^{1-i_r} X_{r+1})e(\bi)=Q_r(\bi)e(\bi)$.
The proof of (\ref{R6}) is similar. 

As we have now verified 
(\ref{R3YPsi}), (\ref{R6}) and (\ref{R5}), we can make use of the
identity (\ref{EDDR}) in $H^\La_\alpha$ when necessary.
For (\ref{R4}), using (\ref{R2PsiE}), we have:
\begin{equation}\label{QESat}
\psi_r^2e(\bi) =\Phi_rQ_r(s_r{\cdot}\bi)^{-1}\psi_re(\bi). 
\end{equation}
If $i_r\neq i_{r+1}$, then by (\ref{EDDR}), 
this becomes
\begin{align*}
\Phi_r\psi_r{^{s_r}}Q_r(s_r{\cdot}\bi)^{-1}e(\bi)&=\Phi_r^2(Q_r(\bi){^{s_r}}
Q_r(s_r{\cdot}\bi))^{-1}e(\bi).
\end{align*}
By (\ref{QEI4}), (\ref{Xy}) and (\ref{QEPreUs}),
we have that $\Phi_r^2 e(\bi)  = (1-P_r(\bi))(q+P_r(\bi))e(\bi)$,
hence this expression simplifies to give 
the right hand side of (\ref{R4}) by (\ref{QEQProp2}).
Now, let $i_r=i_{r+1}$. Then, using (\ref{EDDR}) again, (\ref{QESat}) becomes
\begin{align*}
\psi_r^2 e(\bi)&=
(T_r+1)(1-q+qy_{r+1}-y_r)^{-1}\psi_re(\bi)
\\&=(T_r+1)
(T_r-q)(1-q+qy_r-y_{r+1})^{-1} (1-q+qy_{r+1}-y_r)^{-1}
e(\bi),
\end{align*}
which is zero by (\ref{QECoxeterQuad}).

Finally we prove (\ref{R7}). 
Let us also stop writing $e(\bi)$ on the right of 
all formulas.  
Assume without loss of generality that $r=1$, $d=3$,  denote $i := i_1, j := 
i_2, k := i_3$, and consider the usual five cases.

\subsubsection*{Case 1: $i,j,k$ all distinct}
This is exactly the same calculation as Case 1 
from the proof of Theorem~\ref{dogen};
one needs to use (\ref{EDDR}), 
(\ref{QEI6}) and (\ref{QEQProp4}).

\subsubsection*{Case 2: $i=j\neq k$}
This is exactly the same calculation as Case 2 
from the proof of Theorem~\ref{dogen}.

\subsubsection*{Case 3: $i\neq j=k$} Similar.

\subsubsection*{Case 4: $i=k\neq j$} Expanding as 
in Case 4 from the proof of Theorem~\ref{dogen} and using (\ref{QEI4}),
(\ref{QEI6}), (\ref{QEQProp1}) and (\ref{QEQProp4}), we get
that
$\psi_1\psi_2\psi_1-\psi_2\psi_1\psi_2 = A+B-C$
where
\begin{align*}
A&=(1-q)^2\frac{(X_1X_3-X_2^2)(X_1X_2-qX_2X_3)}{(X_1-X_2)^2(X_2-X_3)^2}
\left(Q_1(iji){^{s_1}}Q_2(jii)Q_2(iji)\right)^{-1},\\
B&=\frac{(X_2-qX_1)(X_1-qX_2)}{(X_2-X_1)(X_1-X_2)}
Q_1(iji)^{-1}\left({^{s_1}}\partial_2(Q_1(jii)^{-1})\right),\\
C&=\frac{(X_3-qX_2)(X_2-qX_3)}{(X_3-X_2)(X_2-X_3)}
Q_2(iji)^{-1}\left({^{s_2}}\partial_1(Q_2(iij)^{-1})\right).
\end{align*}
Noting that ${^{s_1}}Q_2(jii) = q^{-i} (X_1-qX_3)$,
we get that
$$
A = q^i(1-q)^2 \frac{(X_1X_3-X_2^2)X_2}{(X_1-X_2)^2(X_2-X_3)^2}Q_1(iji)^{-1} Q_2(iji)^{-1}.
$$
Expanding the $\partial$'s and using (\ref{QEQProp4}), we also have that
\begin{align*}
B &= 
\frac{(X_2-qX_1)(X_1-qX_2)}{(X_2-X_1)(X_1-X_2)}
\cdot\frac{Q_1(iji)^{-1}Q_2(iji)^{-1}-(Q_1(iji){^{s_1}}Q_1(jii))^{-1}}{y_1-y_3},\\
C &= 
\frac{(X_3-qX_2)(X_2-qX_3)}{(X_3-X_2)(X_2-X_3)}
\cdot\frac{Q_1(iji)^{-1}Q_2(iji)^{-1}-(Q_2(iji){^{s_2}}Q_2(iij))^{-1}}{y_1-y_3}.
\end{align*}
Note also that $y_1-y_3 = -q^{-i} (X_1-X_3)$, and by a direct expansion we have the identity
\begin{multline*}
(1-q)^2 \frac{(X_1X_3-X_2^2)X_2}{(X_1-X_2)^2
(X_2-X_3)^2}
+ \frac{(X_2-qX_1)(X_1-qX_2)}{(X_1-X_2)^2(X_1-X_3)}
\\-\frac{(X_3-qX_2)(X_2-qX_3)}{(X_2-X_3)^2(X_1-X_3)}=0.
\end{multline*}
Combining these things gives that
\begin{multline*}
A+B-C = 
-\frac{(X_2-qX_1)(X_1-qX_2)}{(X_2-X_1)(X_1-X_2)}
\cdot\frac{(Q_1(iji){^{s_1}}Q_1(s_1{\cdot}iji))^{-1}}{y_1-y_3}\\+
\frac{(X_3-qX_2)(X_2-qX_3)}{(X_3-X_2)(X_2-X_3)}
\cdot \frac{(Q_2(iji){^{s_2}}Q_2(s_2{\cdot}iji))^{-1}}{y_1-y_3}.
\end{multline*}
Now use (\ref{Xy}), (\ref{QEPreUs}) and (\ref{QEQProp2}) to deduce that this
equals
$0$
if $i \nslash j$, 
$1$ if $i \rightarrow j$, $-1$ if $i \leftarrow j$
or $-2y_2+y_1+y_3$ if $i \rightleftarrows j$.
This imples (\ref{R7}).

\subsubsection*{Case 5: $i=j=k$} Exercise.
\end{proof} 

\subsection{\boldmath Hecke generators of $R^\La_\al$}\label{QSKLR}
Once again, we let $R^\La_\alpha$ be the cyclotomic Khovanov-Lauda algebra
from $\S$\ref{sckl}.
Using the homomorphism (\ref{Rhom}),
we can regard the power series
$P_r(\bi)$, $Q_r(\bi)$ and $y_r(\bi)$ from $\S$\ref{QSKLGens}
as elements of $R_\al^\La$.
The {\em Hecke generators} of $R_\al^\La$ are the elements
\begin{equation}\label{Qhg}
\{X_1,\dots,X_d\}\cup\{T_1,\dots,T_{d-1}\}
\end{equation}
where
\begin{align}
 X_r&:=\sum_{\bi\in I^\al}
y_r(\bi) e(\bi) =
\sum_{\bi \in I^\al} q^{i_r}(1-y_r) e(\bi),
 \label{QET1}
 \\
 T_r&:=\sum_{\bi\in I^\al}
 (\psi_rQ_r(\bi)-P_r(\bi))e(\bi). 
 \label{QET2}
\end{align}
We often need the following 
consequence of (\ref{R4}) and (\ref{QEQProp2}):
\begin{equation}\label{QEUs}
\psi_r^2Q_r(\bi){^{s_r}}Q_r(s_r{\cdot}\bi)e(\bi)
=
(1-P_r(\bi))(q+P_r(\bi))e(\bi).
\end{equation}
Now we are ready to make the final set of computations.

\begin{Theorem}\label{Qthm2}
The elements (\ref{Qhg})
of $R^\La_\alpha$ 
satisfy the defining relations (\ref{QPoly})--(\ref{QCoxeter})
of the affine Hecke algebra $H_d$.
\end{Theorem}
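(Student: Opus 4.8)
The plan is to follow the proof of Theorem~\ref{thm2} essentially line by line, now carrying the parameter $q$ along: each defining relation (\ref{QPoly})--(\ref{QCoxeter}) is checked after multiplying on the right by a fixed idempotent $e(\bi)$, reduced to an identity among the $y_r$'s and $\psi_r$'s of $R^\La_\alpha$, and then verified using the Khovanov-Lauda relations together with the auxiliary facts (\ref{qr})--(\ref{QEPreUs}) and (\ref{jona})--(\ref{jonc}). Since we are working inside $R^\La_\alpha$, where all the defining relations hold by construction, the divided-difference identity (\ref{EDDR}) is available throughout with no ordering constraints.

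The polynomial relations (\ref{QPoly}) are immediate: the $X_r$'s commute because the $y_r$'s and $e(\bi)$'s do, and each $X_r$ is a unit with $X_r^{-1} = \sum_{\bi\in I^\al} y_r(\bi)^{-1} e(\bi)$, since $y_r(\bi) = q^{i_r}(1-y_r)$ is invertible in $F[[y_r,y_{r+1}]]$ by Lemma~\ref{LNilY}. The mixed relation $T_r X_s = X_s T_r$ for $s \neq r,r+1$ is clear, as $T_r$ and $X_s$ involve disjoint sets of the commuting generators. For the remaining mixed relation, rather than attacking $T_r X_r T_r = q X_{r+1}$ directly I would verify the straightening relation $X_{r+1} T_r e(\bi) = (T_r X_r + (q-1)X_{r+1}) e(\bi)$ for every $\bi \in I^\al$; granted the quadratic relation, this formally gives $T_r X_r T_r = q X_{r+1}$. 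Expanding (\ref{QET1})--(\ref{QET2}) and moving idempotents through $\psi_r$ by (\ref{R2PsiE}), the straightening relation reduces to a short computation: when $i_r = i_{r+1}$ one uses (\ref{R6}), (\ref{R5}), the explicit value $Q_r(\bi) = 1-q+qy_{r+1}-y_r$ from (\ref{QEQProp1}) and $P_r(\bi) = 1$; when $i_r \neq i_{r+1}$ one uses $P_r(\bi)e(\bi) = (1-q)(1-X_{r,r+1})^{-1}e(\bi)$ together with (\ref{R6}), (\ref{R5}). The quadratic relation (\ref{QECoxeterQuad}) goes just as in the degenerate case: expand $T_r^2 e(\bi)$, push $\psi$'s to the left, and use $\psi_r^2 e(\bi) = 0$ from (\ref{R4}) and $Q_r(\bi) = 1-q+qy_{r+1}-y_r$ when $i_r = i_{r+1}$, or (\ref{qr}) and (\ref{QEUs}) when $i_r \neq i_{r+1}$.

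It remains to check the braid relations (\ref{QCoxeter}), the commuting one being obvious. For the length-three relation I would reduce to $r=1$, $d=3$, abbreviate $i = i_1$, $j = i_2$, $k = i_3$, drop the trailing $e(ijk)$, expand $T_2T_1T_2$ and $T_1T_2T_1$ exactly as in (\ref{ELHS})--(\ref{ERHS}) with $p_r(\cdot), q_r(\cdot), s_r$ replaced by $P_r(\cdot), Q_r(\cdot), T_r$, push all $\psi$'s to the left via (\ref{EDDR}), and compare the coefficients of $1, \psi_1, \psi_2, \psi_1\psi_2, \psi_2\psi_1$ and $\psi_1\psi_2\psi_1 = \psi_2\psi_1\psi_2$. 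Cases 1, 2, 3 of Theorem~\ref{thm2} ($i,j,k$ distinct; $i=j\neq k$; $i\neq j=k$) transfer once each power-series identity is replaced by its $q$-analogue, obtained from (\ref{QEP})--(\ref{QEPreUs}) and (\ref{jona})--(\ref{jonc}). Case 4 ($i=k\neq j$) is the crux: here $\psi_1\psi_2\psi_1$ and $\psi_2\psi_1\psi_2$ differ by $\eps := \psi_1\psi_2\psi_1 - \psi_2\psi_1\psi_2$, which by (\ref{R7}) equals $0, 1, -1$ or $-2y_2+y_1+y_3$ according to whether $i \nslash j$, $i \rightarrow j$, $i \leftarrow j$ or $i \rightleftarrows j$; one feeds this, together with the identity $\psi_2^2 - \psi_1^2 = \eps(y_1-y_3)$ (that is (\ref{handy}), again a consequence of (\ref{R4})), into the comparison of constant terms, after which everything collapses to a single rational-function identity in $X_1, X_2, X_3$ --- the non-degenerate counterpart of (\ref{PID1}) --- that is checked by clearing denominators. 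Case 5 ($i=j=k$) is purely formal, using the explicit formulas for the $P$'s and $Q$'s and the $\psi$-braid relation.

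I expect Case 4 of the braid relation to be the main obstacle: one has to keep the $\psi_w$-coefficients straight, carry the correction term $\eps$ through the constant term, and exhibit exactly the right $q$-deformed rational identity among the $X_r$'s, all the while using the consistency of the chosen $Q_r(\bi)$ under (\ref{QEQProp2}) and (\ref{QEQProp4}). Everything else --- the two mixed relations, the quadratic relation, and Cases 1, 2, 3, 5 of the braid relation --- is a routine, if lengthy, transcription of the arguments already given for Theorem~\ref{thm2}, with $q$ inserted in the obvious places.
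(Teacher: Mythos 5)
Your proposal is correct and follows essentially the same route as the paper's proof: reduce each relation to an identity over a fixed idempotent $e(\bi)$, establish the quadratic relation first and then use it to deduce $T_r X_r T_r = qX_{r+1}$ from a straightening identity (you use $X_{r+1}T_r = T_r X_r + (q-1)X_{r+1}$ where the paper uses the equivalent $X_r T_r = T_r X_{r+1} + (1-q)X_{r+1}$), and handle the length-three braid relation by pushing $\psi$'s to the left via (\ref{EDDR}) in five cases, with Case 4 requiring the correction $\eps = \psi_1\psi_2\psi_1-\psi_2\psi_1\psi_2$ from (\ref{R7}), the identity (\ref{handy}), and a final rational-function check — exactly the strategy the paper carries out.
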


\begin{proof}
The relations (\ref{QPoly}), (\ref{QDAHA}) for $s \neq r,r+1$,
and (\ref{QCoxeter}) for $|r-s|>1$ are obvious.
Next we check (\ref{QECoxeterQuad}), i.e. $T_r^2 e(\bi) = (q-1)T_r e(\bi)
+ q e(\bi)$
for all $\bi \in I^\alpha$.
By exactly the same calculation as in the proof of Theorem~\ref{thm2}, 
we have that
$$
T_r^2 = (\psi_r Q_r(s_r{\cdot}\bi)\psi_r Q_r(\bi)
-P_r(s_r{\cdot}\bi)\psi_rQ_r(\bi) - \psi_r Q_r(\bi) P_r(\bi)
+ P_r(\bi)^2)e(\bi).
$$
If $i_r  =i_{r+1}$ we use 
the facts $\psi_r^2 e(\bi) = 0$ and
$\partial_r(1-q+qy_{r+1}-y_r) = q+1$ combined with
(\ref{EDDR})
to deduce that
\begin{align*}
T_r^2 e(\bi) &= (\psi_r (1-q+qy_{r+1}-y_r) \psi_r Q_r(\bi)
- 2 \psi_r Q_r(\bi) + 1) e(\bi)\\
&=
((q+1) \psi_r Q_r(\bi) - 2 \psi_r Q_r(\bi) +1) e(\bi)\\
&=
((q-1) (\psi_r Q_r(\bi)-1) + q)e(\bi) = ((q-1) T_r + q) e(\bi).
\end{align*}
If $i=j$ then by (\ref{EDDR}), (\ref{QEUs}) and (\ref{qr}) we get instead that
\begin{align*}
T_r^2 &= ((\psi_r^2) {^{s_r}}Q_r(s_r{\cdot}\bi) Q_r(\bi)-\psi_r(P_r(\bi)+{^{s_r\!}}P_r(s_r{\cdot}\bi))Q_r(\bi)+P_r(\bi)^2)e(\bi)\\
&=((1-P_r(\bi))(q+P_r(\bi)) + (q-1)\psi_rQ_r(\bi)+P_r(\bi)^2 )e(\bi)\\
&= ((q-1)(\psi_r Q_r(\bi)-P_r(\bi))+q) e(\bi) = ((q-1) T_r + q)e(\bi).
\end{align*}
This completes the proof of the quadratic relation.

Next consider the remaining mixed relation from (\ref{QDAHA}).
As we have already checked the quadratic relation,
it suffices to show that $X_r T_r e(\bi) = (T_r +1-q) X_{r+1}e(\bi)$
for each $\bi$.
Using (\ref{EDDR}), (\ref{jonc}) 
and the fact that $\partial_r(y_r(\bi)) = q^{i_r}$, 
we get that
\begin{align*}
X_r T_r e(\bi) &= 
(y_r (s_r {\cdot} \bi) \psi_r Q_r(\bi) - y_r(\bi) P_r(\bi))e(\bi)\\
&=(\psi_r y_{r+1}(\bi) Q_r(\bi) + \delta_{i_r, i_{r+1}} q^{i_r} Q_r(\bi)
- y_r(\bi) P_r(\bi))e(\bi),\\
(T_r+1-q)X_{r+1}e(\bi) &=
(\psi_r Q_r(\bi) - P_r(\bi) + 1-q) y_{r+1}(\bi) e(\bi).
\end{align*}
Considering the two cases $i_r \neq i_{r+1}$ and $i_r = i_{r+1}$
separately,
it's now an easy exercise to check these two expressions are equal
using (\ref{X}), (\ref{QEP}) and (\ref{QEQProp1}).

This just leaves the braid relations.
We assume that $r=1$, $d=3$, set
$i := i_1, j := j_1, k := k_1$, and need to show that
$T_2 T_1 T_2 e(ijk) = T_1T_2T_1 e(ijk)$.
As usual we stop writing $e(ijk)$ on the right of all expressions.
To start with, the left (resp.\ right) hand side of the identity to be checked
expands to exactly the same expression as (\ref{ELHS})
(resp.\ (\ref{ERHS})),
with all $p_r$ and $q_r$ there replaced by $P_r$ and $Q_r$.
Then we consider the usual five cases.

\subsubsection*{Case 1: $i,j,k$ are all different}
This is entirely similar to Case 1 in the proof of Theorem~\ref{thm2}. 
When equating $\psi_1$-, $\psi_2$- and constant coefficients at the end,
everything reduces 
to the following three identities:
\begin{align*}
{^{s_1\!}}P_2(jik)P_2(ijk)
&=P_1(ijk)P_2(ijk)+{^{s_1\!}}P_1(jik){^{s_1\!}}P_2(jik),\\
{^{s_2}}P_1(ikj)P_1(ijk)
&=P_1(ijk)P_2(ijk)+{^{s_2}}P_1(ikj){^{s_2}}P_2(ikj),\\
P_1(ijk)P_2(ijk)^2+ &(\psi_2^2){^{s_2}}Q_2(ikj){^{s_2}}P_1(ikj)Q_2(ijk)\\
&=P_1(ijk)^2P_2(ijk)+(\psi_1^2){^{s_1}}Q_1(jik){^{s_1\!}}P_2(jik)Q_1(ijk).
\end{align*}
The first two of these are easily checked from
(\ref{QEP}) and (\ref{jonc}).
The last one follows using (\ref{QEUs}) too.

\subsubsection*{Case 2: $i=j\neq k$} 
Again, the initial expansions made in Case 2 of Theorem~\ref{thm2}
are valid in the present situation.
We need to equate coefficients on both sides of these
expansions. As before,
this is easy until we get to the $\psi_1$-, $\psi_2$-
and constant coefficients.
For the $\psi_1$-coefficients, noting that
$\partial_1(Q_1(iik))= 1+q$, we have to show that
\begin{equation}\label{night}
\partial_1(P_2(iik))Q_1(iik) = 
P_2(iik)-q\,{^{s_1\!}}P_2(iik) - P_2(iik){^{s_1\!}}P_2(iik).
\end{equation}
This can be checked by brute force, expanding both sides fully using
(\ref{QEP}), (\ref{X}) and (\ref{QEQProp1}).
Next, for the $\psi_2$-coefficients, we need to show that
\begin{multline*}
(P_2(iik)-{^{s_2}}P_1(iki)+{^{s_2}}P_1(iki){^{s_2}}P_2(iki))Q_2(iik)
=\\
\left[
\partial_1(Q_2(iik))(P_2(iik)-{^{s_2}}P_1(iki))-{^{s_1}}Q_2(iik) \partial_1({^{s_2}} P_1(iki))
\right]Q_1(iik).
\end{multline*}
Replacing ${^{s_2}}P_1(iki)$ with ${^{s_1\!}}P_2(iik)$
everywhere and using (\ref{qr}) to rewrite the term
${^{s_2}}P_2(iki)$ as $1-q-P_2(iik)$, this identity is equivalent to
\begin{multline}
\!\!\!\!(P_2(iik)-q\,{^{s_1\!}}P_2(iik)-P_2(iik){^{s_1\!}}P_2(iik))Q_2(iik)
=
\\
\:\left[
\partial_1(Q_2(iik))(P_2(iik)-{^{s_1\!}}P_2(iik))-{^{s_1}}Q_2(iik) \partial_1({^{s_1\!}} P_2(iik))
\right]Q_1(iik).\!\!\label{bed}
\end{multline}
Now we expand the $\partial_1$'s on the right hand side of (\ref{bed}) to
see that it equals
$$
\frac{{^{s_1\!}}P_2(iik)-P_2(iik)}{y_1-y_2}
Q_1(iik)Q_2(iik).
$$
This equals $\partial_1(P_2(iik)) Q_1(iik) Q_2(iik)$
which by (\ref{night}) is equal to the left hand side of 
(\ref{bed}).
Finally, to check the constant coefficients, we need to show that
\begin{multline*}
P_2(iik)(1-P_2(iik))- 
(1-P_2(iik))(q+P_2(iik)){^{s_2}}P_1(iki)\\
=
\partial_1(P_2(iik))Q_1(iik)(1-P_2(iik)),
\end{multline*}
where we have used (\ref{QEUs}) and the
observation that $\psi_1^2 = 0$ by (\ref{R4}).
Noting that ${^{s_2}}P_1(iki) = {^{s_1\!}}P_2(iik)$,
this follows from (\ref{night}) on multiplying both sides
by $(1-P_2(iik))$.

\subsubsection*{Case 3: $i\neq j=k$} Similar.

\subsubsection*{Case 4: $i=k\neq j$}
Again we follow the calculation from Case 4 of
Theorem~\ref{thm2}. Let 
$\eps := 0$ if $i \nslash j$, $1$ if $i \rightarrow j$,
 $-1$ if $i \leftarrow j$ and
$-2y_2+y_1+y_3$ if $i \rightleftarrows j$.
In view of (\ref{EQProp2}), 
the $\psi_2\psi_1\psi_2$-term cancels with the $\psi_1\psi_2\psi_1$-term, producing the addition of 
$\eps q_1(iji){^{s_2}}q_1(iij)q_2(iji)$
to the constant term of the second 
expression.
Now we just consider the constant coefficients,
all the other coefficients being routine.
Using (\ref{QEUs}), we need to show that
\begin{multline*}
-P_2(iji)^2P_1(iji)
- (1-P_2(iji))(q+P_2(iji))\\
+ (\psi_2^2) {^{s_2}}\partial_1(Q_2(iij)){^{s_2}}Q_1(iij) Q_2(iji)
\end{multline*}
equals
\begin{multline*}
-P_1(iji)^2P_2(iji) - (1-P_1(iji))(q+P_1(iji)) + \\(\psi_1^2) 
{^{s_1\!}} \partial_2(Q_1(jii)){^{s_1}}Q_2(jii) Q_1(iji)
+ \eps Q_1(iji){^{s_2}}Q_1(iij) Q_2(iji).
\end{multline*}
Expanding the $\partial$'s, this means we have to show that
\begin{multline*}
P_1(iji)P_2(iji)(P_1(iji)-P_2(iji))\\
+ (1-P_1(iji))(q+P_1(iji))
- (1-P_2(iji))(q+P_2(iji))\\
+
(\psi_2^2) \frac{Q_1(iji)Q_2(iji)-{^{s_2}}Q_2(iij)Q_2(iji)}{y_1-y_3} {^{s_2}}Q_1(iij) \\
-
(\psi_1^2) \frac{Q_1(iji)Q_2(iji)-Q_1(iji){^{s_1}}Q_1(jii)}{y_1-y_3} {^{s_2}}Q_1(iij)\\
-\eps Q_1(iji)Q_2(iji){^{s_2}}Q_1(iij)=0.
\end{multline*}
Now we use (\ref{handy}) and (\ref{QEUs}) and simplify 
to get that the
left hand side of this expression equals
\begin{multline*}
\!((1-P_1(iji))(q+P_1(iji))-(1-P_2(iji))(q+P_2(iji)) \frac{{^{s_2}}Q_1(iij)+y_1-y_3}{y_1-y_3}\\+P_1(iji)P_2(iji)(P_1(iji)-P_2(iji)).
\end{multline*}
Simplifying the first term using (\ref{QEQProp1}) then cancelling
$(P_1(iji)-P_2(iji))$ everywhere, this reduces to checking
$$
(1-q)(1-q-P_1(iji)-P_2(iji))(1-y_3)+P_1(iji)P_2(iji)(y_1-y_3)=0,
$$
which is straightforward using (\ref{QEP}) then (\ref{X}).

\subsubsection*{Case 5: $i=j=k$} Exercise.
\end{proof}

\subsection{\boldmath Proof of the Main Theorem in the non-degenerate case}
Finally we can prove the Main Theorem from the introduction for $q \neq 1$.
By Theorem~\ref{Qgen0}, there is a 
homomorphism 
\begin{equation}\label{Qsigma}
\RtoH: R_\al^\La\to H_\al^\La
\end{equation}
mapping all the formal generators $e(\bi)$, $y_r$ and $\psi_r$ to the 
explicit elements of $H_\al^\La$ with the same names. 
This homomorphism is an isomorphism because it has a two-sided inverse
\begin{equation}\label{Qrho}
\HtoR:H_\al^\La\to R_\al^\La
\end{equation} 
mapping the generators $T_r$ and $X_r$ to the explicit
elements of $R^\La_\al$ with the same names.
This statement follows
by Theorem~\ref{Qthm2} and arguments that are
entirely similar to those in $\S$\ref{spf}.

\section{Example: Young's semi-normal form}\label{SSEx}

In this section we make a rather drastic special assumption: 
$\La=\La_0$ and 
$e=0$. With this assumption, the degenerate cyclotomic Hecke algebra 
$H^\La_d$ is equal to 
the 
group algebra $FS_d$ of the symmetric 
group if $q = 1$ or the corresponding
Iwahori-Hecke algebra if $q \neq 1$.
Either way, $H^\La_d$ is a semisimple algebra, so its
blocks are matrix algebras.
We want to explain how in this special case our results basically reduce to 
the classical Young's semi-normal form. From this point of view, one can think 
loosely of our Main Theorem as a 
replacement for Young's semi-normal form when the
blocks are not simple.

Let $\la$ be a partition of $d$. We identify $\la$ with its Young diagram 
drawn in the usual English notation. 
The {\em residue} of the box of $\la$ in row $i$ and column $j$ 
is defined to be $j-i \in I$. 
By a {\em $\la$-tableau} we mean a diagram obtained by filling the
boxes of $\la$ with the entries $1,\dots,d$ (each appearing exactly once).
Let $\Tab(\la)$ denote
the usual set of all {\em standard $\la$-tableaux},
i.e. the $\la$-tableaux
whose entries are strictly increasing both along rows from left to right
and down columns from top to bottom.
The symmetric group $S_d$ acts naturally on the set of all
$\la$-tableaux by its action on the entries,
but it 
does not preserve the subset $\Tab(\la)$ of 
standard $\la$-tableaux. 

To any $\T\in\Tab(\la)$ 
we associate its {\em residue sequence} $\bi^{\T} := 
(i_1,\dots,i_d)$, where $i_m$ is the residue of the box of
$\T$ containing the entry
$m$.
We define the {\em weight} of a partition $\la$
to be the weight $\alpha_{i_1}+\cdots+\alpha_{i_d} \in Q_+$
where $(i_1,\dots,i_d)$ is the residue sequence of
any standard $\la$-tableau.
The partition $\la$ 
can be uniquely recovered from its weight. 

Now fix a partition $\la$ of $d$ of weight $\al$. 
We use Khovanov-Lauda generators to construct a module 
$S(\la)$ over the block $H_\al^{\La}$ of the symmetric group 
$FS_d$. 
As a vector space, we let
\begin{equation}
S(\la):=
\bigoplus_{\T\in\Tab(\la)}F v_{\T}.
\end{equation}
The action of the Khovanov-Lauda generators on this basis
is defined as follows: 
\begin{align}\label{f1}
e(\bi)v_{\T}&:=
\left\{
\begin{array}{ll}
v_{\T} \hspace{3.5mm}&\hbox{if $\bi^{\T} = \bi$},\\
0 &\hbox{otherwise};
\end{array}
\right.
\\ 
y_r v_{\T}&:=0;\label{secd}
\\  
\psi_rv_{\T}&:=
\left\{
\begin{array}{ll}
v_{s_r\T} &\hbox{if $s_r\T\in \Tab(\la)$},\\
0 &\hbox{otherwise.}
\end{array}
\right.\label{f3}
\end{align}
It is now very easy to check that the relations (\ref{ERCyc})--(\ref{R7}) 
are satisfied, hence applying our Main Theorem in this very special case
we get an $H_\al^{\La}$-action on $S(\la)$. 
Moreover, 
if $\Stab, \T\in \Tab(\la)$ then one can obtain $\T$ from $\Stab$ by a series of basic transpositions 
so that on each step we still have a standard tableau;
see for example \cite[Lemma~2.2.8]{Kbook}. It 
follows that the $H_\al^{\La}$-module $S(\la)$ is irreducible. 

Since $H_\al^{\La}$ is a simple matrix algebra, 
our construction yields an isomorphism 
$H_\al^{\La}\cong \End_F(S(\la))$.
Letting $\la$ vary over all partitions of $d$,
we get a full set of pairwise inequivalent
irreducible $H_\al^\La$-modules in this way.
Finally, if $q=1$ and we use (\ref{ET2}), we obtain formulas for the 
action of the standard generators $s_1,\dots,s_{d-1}$ on each $S(\la)$ 
which are exactly Young's formulas. 
On the other hand if $q \neq 1$ and we use (\ref{QET2})
we obtain formulas for the action of $T_1,\dots,T_{d-1}$
on $S(\la)$ which are essentially the formulae going back to
Hoefsmit \cite{H}; see also \cite{Ram} and \cite{W}.

Replacing the set $\mathscr T(\la)$
with the set $\mathscr T_e(\la)$ of
{\em $e$-standard tableaux},
the construction of the irreducible module $S(\la)$
by the formulae (\ref{f1})--(\ref{f3})
can be generalized to include the so-called 
completely splittable irreducible
representations of the symmetric group in 
characteristic $e > 0$ (and their $q$-analogues at roots of unity);
see \cite{KCS} and \cite{R}.

\section{Base change}

Finally we explain an application of our main result to base change.
Fix $\La \in P_+$ of level $\ell$
and $\alpha \in Q_+$ of height $d$.
Henceforth we will denote the algebra $H^\La_\alpha$
(resp.\ $H^\La_d$)
instead by $H^\La_\alpha(F)$ (resp.\ $H^\La_d(F)$), 
as we are going to allow the ground field to change. 
Also make a choice of 
Khovanov-Lauda generators for $H^\La_\alpha(F)$
according to (\ref{klg}) if $q=1$ or (\ref{Qklg}) if $q \neq 1$.
To start with we explain how to descend
from the field $F$ to its prime subfield $E$.

\begin{Theorem}\label{EF}
Let $E$ be the prime subfield of $F$.
Let $H^\La_\alpha(E)$ denote the $E$-subalgebra of 
$H^\La_\alpha(F)$ generated by the Khovanov-Lauda generators.
Then the natural map 
$$
F \otimes_{E} H^\La_\alpha(E)
\rightarrow H^\La_\alpha(F)
$$ 
is an $F$-algebra isomorphism.
Moreover, letting 
$R^\La_\alpha(E)$ be the cyclotomic Khovanov-Lauda algebra
over the field $E$ defined as in $\S$\ref{sckl},
there is an $E$-algebra isomorphism
$R^\La_\alpha(E) \stackrel{\sim}{\rightarrow}
H^\La_\alpha(E)$ sending
the named 
generators of $R^\La_\alpha(E)$ to 
the Khovanov-Lauda generators of $H^\La_\alpha(E)$.
\end{Theorem}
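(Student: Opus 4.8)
The plan is to deduce both statements formally from the Main Theorem over $F$, the key point being that the defining relations (\ref{ERCyc})--(\ref{R7}) of $R^\La_\alpha$ involve only the integer coefficients $0,\pm1,\pm2$ together with the combinatorial data $e,\La,\alpha$, and in no way refer to $q$ or to the ground field. Consequently, writing $X$ for the fixed set of generators (\ref{kl0}) and $E\subseteq F$ for the prime subfield, the algebras $R^\La_\alpha(E)$ and $R^\La_\alpha(F)$ are obtained from one and the same list of relations in $E\langle X\rangle$, by quotienting the free algebras $E\langle X\rangle$ and $F\langle X\rangle$ respectively. The standard base change statement for presented algebras (using that $F$ is a free, hence faithfully flat, $E$-module) then shows that the natural map
\[
F\otimes_E R^\La_\alpha(E)\;\longrightarrow\;R^\La_\alpha(F),\qquad 1\otimes g\mapsto g\quad(g\in X),
\]
is an isomorphism of $F$-algebras. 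In particular the induced map $R^\La_\alpha(E)\to R^\La_\alpha(F)$ is injective, and I will identify $R^\La_\alpha(E)$ with the $E$-subalgebra of $R^\La_\alpha(F)$ generated by the elements $e(\bi),y_r,\psi_r$.

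Next I would bring in the Main Theorem over $F$: the homomorphism $\RtoH\colon R^\La_\alpha(F)\to H^\La_\alpha(F)$ of (\ref{sigma}) (when $q=1$) or (\ref{Qsigma}) (when $q\neq1$) is an isomorphism, and by construction it carries $e(\bi),y_r,\psi_r$ to the Khovanov-Lauda generators of $H^\La_\alpha(F)$ chosen in the paragraph preceding the theorem. Being an $E$-algebra homomorphism, $\RtoH$ maps the $E$-subalgebra $R^\La_\alpha(E)\subseteq R^\La_\alpha(F)$ onto the $E$-subalgebra of $H^\La_\alpha(F)$ generated by the images of those generators, which is exactly $H^\La_\alpha(E)$ as defined in the statement. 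The resulting map $R^\La_\alpha(E)\to H^\La_\alpha(E)$ is injective since $\RtoH$ is, surjective by definition of $H^\La_\alpha(E)$, and sends named generators to Khovanov-Lauda generators; this is precisely the second assertion of the theorem.

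For the first assertion I would compose the isomorphisms already obtained:
\[
F\otimes_E H^\La_\alpha(E)\;\xleftarrow{\ \sim\ }\;F\otimes_E R^\La_\alpha(E)\;\xrightarrow{\ \sim\ }\;R^\La_\alpha(F)\;\xrightarrow[\RtoH]{\ \sim\ }\;H^\La_\alpha(F),
\]
where the leftmost arrow is $\id_F$ tensored with the inverse of the isomorphism of the previous paragraph. It remains to see that this composite is the natural multiplication map $c\otimes h\mapsto ch$. Both are $F$-algebra homomorphisms out of $F\otimes_E H^\La_\alpha(E)$, and the latter is generated as an $F$-algebra by the elements $1\otimes h$ with $h$ a Khovanov-Lauda generator of $H^\La_\alpha(E)$; on each such element both maps return $h$, regarded inside $H^\La_\alpha(F)$. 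Hence the maps agree, and the natural map is an isomorphism.

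No real obstacle arises: the argument is entirely formal once the Main Theorem over $F$ is available, and in particular needs no dimension count. The substance --- and the reason the statement is not vacuous --- lies in the opening observation that the Khovanov-Lauda presentation of $H^\La_\alpha$ is defined over $\Z$, so that the Main Theorem manufactures a prime-field form of $H^\La_\alpha(F)$, even though in the non-degenerate case $q$ need not belong to $E$ and the ordinary Hecke presentation displays no integral structure. For this same reason one should resist applying the Main Theorem over $E$ directly; everything is obtained by restriction from the situation over $F$.
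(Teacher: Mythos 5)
Your proof is correct and follows essentially the same strategy as the paper's. Both arguments rest on the single key observation that the Khovanov--Lauda presentation has coefficients in the prime field $E$ (indeed in $\Z$), so that everything follows formally from the Main Theorem over $F$. The only cosmetic difference is in the packaging: you first invoke flat base change for presented algebras to identify $F\otimes_E R^\La_\alpha(E)\cong R^\La_\alpha(F)$ and then restrict the isomorphism $\RtoH$, whereas the paper constructs the two mutually inverse maps between $F\otimes_E R^\La_\alpha(E)$ and $H^\La_\alpha(F)$ directly from the universal properties; these are the same argument written in two ways.
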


\begin{proof}
There is an obvious surjective homomorphism 
$R^\La_\alpha(E) \twoheadrightarrow H^\La_\alpha(E)$
mapping the named generators of $R^\La_\alpha(E)$
to the Khovanov-Lauda generators of $H^\La_\alpha(E)$.
Extending scalars, this yields a map
$F \otimes_{E} R^\La_\alpha(E) \twoheadrightarrow H^\La_\alpha(F)$.
On the other hand by the presentation for $R^\La_\alpha(F)$
arising from our Main Theorem, there is a map
$H^\La_\alpha(F) \rightarrow F \otimes_{E} R^\La_\alpha(E)$
such that $e(\bi)\mapsto 1 \otimes e(\bi)$,
$y_r\mapsto 1 \otimes y_r$ and $\psi_r \mapsto 1 \otimes \psi_r$
for each 
$\bi$ and $r$. Clearly these two maps are 
mutual inverses, hence both are isomorphisms.
This implies that the original map
$R^\La_\alpha(E) \twoheadrightarrow H^\La_\alpha(E)$
is injective, so it is an isomorphism,
and the theorem follows.
\end{proof}

\begin{Corollary}\label{mcor}
Let $D(F)$ be an irreducible $H^\La_\alpha(F)$-module.
Then there exists an irreducible
$H^\La_\alpha(E)$-module $D(E)$
such that 
$D(F) \cong F \otimes_{E} D(E)$.
\end{Corollary}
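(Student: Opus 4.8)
The plan is to deduce this from Theorem~\ref{EF}, which identifies $H^\La_\alpha(F)$ with $F\otimes_E A$ for the fixed finite-dimensional algebra $A:=H^\La_\alpha(E)$ over the prime field $E$. It is then enough to prove the ring-theoretic statement that every irreducible $F\otimes_E A$-module is of the form $F\otimes_E D(E)$ for an irreducible $A$-module $D(E)$; and this holds as soon as $E$ is a splitting field for $A$, that is, $A/J(A)\cong\prod_i M_{n_i}(E)$. Indeed, $F\otimes_E(A/J(A))$ is then semisimple, so $J(F\otimes_E A)=F\otimes_E J(A)$, and the irreducible $F\otimes_E A$-modules are exactly the modules $F\otimes_E S_i$ as $S_i$ runs over the irreducible $A$-modules. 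The whole question thus reduces to showing that the prime field $E$ is a splitting field for $H^\La_\alpha(E)$.

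To prove this I would use the standard fact that a cellular algebra over a field $k$ has $k$ as a splitting field, and equip $H^\La_\alpha(E)$ with a cellular structure over $E$. If $e=0$, then $F$ has characteristic $0$, so $E=\mathbb{Q}$, and the degenerate part of the Main Theorem applied over $\mathbb{Q}$ identifies $H^\La_\alpha(\mathbb{Q})$ with a block of the degenerate cyclotomic Hecke algebra over $\mathbb{Q}$, which carries the Ariki--Mathas / Dipper--James--Mathas cellular basis. If $e>0$, let $K$ be the subfield of $F$ generated over $E$ by $q$ (so $K=E$ when $q=1$); then $K$ is a finite field when $F$ has positive characteristic and equals $\mathbb{Q}(\zeta_e)$ when $F$ has characteristic $0$, and the Main Theorem over $K$ identifies $K\otimes_E A$ with a block of a (possibly degenerate) cyclotomic Hecke algebra over $K$ at parameter $q$. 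That block is cellular, so $K$ is a splitting field for $K\otimes_E A$.

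It remains to descend from $K$ down to $E$. The key here is the $q$-independence of the relations defining $R^\La_\alpha$ in the Main Theorem: for $\sigma\in\operatorname{Gal}(K/E)$ the $\sigma$-twist corresponds on the Hecke side to replacing $q$ by another primitive root of unity of the same order, and the Main Theorem provides an isomorphism between the two Hecke algebras that matches the Khovanov--Lauda generators and hence the combinatorial, field-independent labelling of the irreducibles; consequently every irreducible $K\otimes_E A$-module is $\operatorname{Gal}(K/E)$-stable. When $K$ is finite this already makes each such module defined over $E$, since Schur indices over a finite field are trivial by Wedderburn's theorem, and $E$ is then a splitting field for $A$ as required. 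The case $E=\mathbb{Q}$, $K=\mathbb{Q}(\zeta_e)$ with $e>0$ is the step I expect to be the real obstacle, since now there is a potential Schur index over $\mathbb{Q}$ to dispose of; I would do this by choosing the Dipper--James--Mathas cell datum on $K\otimes_{\mathbb{Q}}A$ to be $\operatorname{Gal}(K/\mathbb{Q})$-equivariant and descending it to a cellular structure on $A=R^\La_\alpha(\mathbb{Q})$ over $\mathbb{Q}$, so that once more a cellular algebra splits over its base field. Apart from this point, every step is either Theorem~\ref{EF} or a routine application of cellularity.
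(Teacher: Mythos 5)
Your opening reduction --- that it suffices to show $E$ is a splitting field for $A:=H^\La_\alpha(E)\cong R^\La_\alpha(E)$, after which everything follows from $J(F\otimes_E A)=F\otimes_E J(A)$ --- is exactly the right framing and is what the paper implicitly uses. But the paper disposes of the splitting question in one line by invoking \cite[Corollary 3.19]{KL1} (extended to $e=2$), which says that every irreducible $R^\La_\alpha$-module is absolutely irreducible over \emph{any} ground field; that result is proved by Khovanov and Lauda through a character-theoretic argument internal to the KL algebra, and it is field-independent by construction. You instead try to reprove splitting by transporting a cellular structure from a cyclotomic Hecke algebra, which is a genuinely different route --- and the place where it breaks is exactly the one you flagged.

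Concretely: for $e>0$ and $\operatorname{char}F=0$, you have $E=\Q$ but $q=\zeta_e\notin\Q$ once $e>2$, so there is no cyclotomic Hecke algebra over $\Q$ at parameter $q$ whose block you can identify with $A$, and the DJM cell datum lives only over $K=\Q(\zeta_e)$. Your proposed fix --- choose a $\operatorname{Gal}(K/\Q)$-equivariant cell datum on $K\otimes_\Q A$ and descend it --- is a real claim that needs proof; the DJM cellular basis is built from $q$-dependent data and there is no reason a priori that it can be made Galois-equivariant, nor is there a general theorem that Galois-stable cellular structures descend to give cellularity (and hence splitting) over the fixed field. In particular, knowing that each irreducible over $K$ is Galois-stable does not, by itself, kill the Schur index over $\Q$: stability only shows the character is rational, not that the module is realizable over $\Q$. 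So the argument is incomplete precisely in the case $E=\Q$, $e>0$, which is the heart of the matter. The shortest repair is to replace the cellularity argument by the citation the paper uses: absolute irreducibility of simples for $R^\La_\alpha$ over an arbitrary field, which is available directly from the Khovanov--Lauda theory and requires no Hecke-side parameter to exist in $E$.
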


\begin{proof}
By \cite[Corollary 3.19]{KL1} (extended to include the case $e=2$ as well)
every irreducible $R^\La_\alpha(E)$-module
is absolutely irreducible.
\end{proof}

Recall the
{\em formal character} $\ch M$ of a finite dimensional 
$H^\La_\alpha(F)$-module $M$
means the formal linear combination
$\sum_{\bi \in I^\alpha} (\dim M_\bi) \cdot \bi$,
where $M_\bi$ is the $\bi$-weight space $e(\bi) M\subseteq M$.
It is known that 
two modules $M$ and $N$ are equal in the Grothendieck group
if and only if $\ch M = \ch N$.
Corollary~\ref{mcor} implies at once that the formal
characters of the irreducible $H^\La_\alpha(F)$-modules
are the same as the formal
characters of the irreducible $H^\La_\alpha(E)$-modules
(defined in an analogous way via the idempotents $e(\bi) \in H^\La_\alpha(E)$).
Since the formal characters of the
Specht modules of \cite{DJM} 
do not depend on the underlying field, this proves a conjecture of Mathas
asserting that decomposition matrices of Specht modules over
$H^\La_\alpha(F)$ depend only on $e$
and the characteristic of $F$ (not on $F$ itself):

\begin{Corollary}
Conjecture 6.38 of \cite{MathasB} holds.
\end{Corollary}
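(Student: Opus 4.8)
The plan is to assemble the pieces that are already in place. First recall the content of Conjecture~6.38 of \cite{MathasB}: writing $S^\la(F)$ for the Specht module labelled by a multipartition $\la$ inside $H^\La_\alpha(F)$ and $D^\mu(F)$ for its irreducible modules, the decomposition number $d_{\la\mu}(F) := [S^\la(F):D^\mu(F)]$ should depend only on $e$ and $\operatorname{char}F$, not on the particular field $F$ (nor on the particular parameter $q \in F^\times$ realizing that $e$). The Main Theorem already removes the dependence on $q$ at the level of the algebra: $H^\La_\alpha(F) \cong R^\La_\alpha(F)$, and $R^\La_\alpha(F)$ is defined by relations depending only on the quiver $\Ga$, hence only on $e$ and on $F$. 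Under this isomorphism the idempotents $e(\bi)$ are matched up, so the formal character of a module may be computed interchangeably in $H^\La_\alpha(F)$ or in $R^\La_\alpha(F)$.

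Next I would descend to the prime subfield $E$ of $F$ (which is $\Q$ or $\mathbb{F}_p$ according to $\operatorname{char}F$). By Theorem~\ref{EF} we have $R^\La_\alpha(F) \cong F \otimes_E R^\La_\alpha(E)$, and by the proof of Corollary~\ref{mcor} every irreducible $R^\La_\alpha(E)$-module is absolutely irreducible. Consequently the assignment $D(E) \mapsto F \otimes_E D(E)$ is a well-defined, formal-character-preserving bijection from the isomorphism classes of irreducible $R^\La_\alpha(E)$-modules onto those of $R^\La_\alpha(F)$: it is surjective by Corollary~\ref{mcor}, and injective because $\ch$ already detects isomorphism classes over $E$. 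Hence the multiset $\{\ch D^\mu(F)\}$ of formal characters of the irreducibles of $H^\La_\alpha(F)$ coincides with that of $H^\La_\alpha(E)$, and so depends only on $e$ and on $E$, i.e.\ only on $e$ and $\operatorname{char}F$. On the other hand, by \cite{DJM} the character $\ch S^\la(F)$ is given by an explicit combinatorial formula involving only $e$, so it too is independent of the field.

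Finally I would invoke the standard linear-algebra argument. In the Grothendieck group one has the identity
$$
\ch S^\la(F) = \sum_\mu d_{\la\mu}(F)\,\ch D^\mu(F),
$$
and since two modules agree in the Grothendieck group if and only if they have the same formal character, the formal characters of the pairwise non-isomorphic $D^\mu(F)$ are linearly independent; hence this system determines the non-negative integers $d_{\la\mu}(F)$ uniquely in terms of the data $\ch S^\la(F)$ and $\{\ch D^\mu(F)\}$, both of which depend only on $e$ and $\operatorname{char}F$. Therefore the whole decomposition matrix $(d_{\la\mu}(F))$ depends only on $e$ and $\operatorname{char}F$, which is exactly Conjecture~6.38 of \cite{MathasB}. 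I do not anticipate a genuine obstacle: the argument is driven entirely by the Main Theorem together with Theorem~\ref{EF} and Corollary~\ref{mcor}, and the only point requiring a little care is the bookkeeping that the labelling sets of irreducibles over $F$ and over $E$ correspond under base change, which is precisely what absolute irreducibility over $E$ supplies.
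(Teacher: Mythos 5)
Your proposal is correct and follows essentially the same route as the paper: it uses Corollary~\ref{mcor} to transport the formal characters of irreducibles down to the prime subfield, combines this with the field-independence of the Specht module characters from~\cite{DJM}, and then uses the fact that formal characters determine classes in the Grothendieck group to pin down the decomposition numbers. The only difference is that you spell out the bijection on irreducibles and the linear-independence argument more explicitly than the paper does, which is a reasonable amount of extra detail.
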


Now suppose also that $K$ is a field of characteristic zero, and 
let $\xi \in K^\times$ be a primitive $e$th root of unity
if $e > 0$, or some element that is not a root of unity if $e = 0$.
Let $H^\La_d(K)$ denote the 
cyclotomic Hecke algebra over $K$ at parameter $\xi$,
and $H^\La_\alpha(K) := e_\alpha H^\La_d(K)$.
Fix a choice of Khovanov-Lauda generators
for $H^\La_\alpha(K)$ according to (\ref{Qklg}).
The following theorem explains how $H^\La_\alpha(F)$ 
can be obtained from
$H^\La_\alpha(K)$ by base change.

\begin{Theorem}\label{Red}
Let $H^\La_\alpha(\Z)$
denote the 
subring of $H^\La_\alpha(K)$ generated by
the Khovanov-Lauda generators.
Then $H^\La_\alpha(\Z)$ is a free $\Z$-module
and there are isomorphisms
\begin{align}\label{i1}
H^\La_\alpha(K)&\stackrel{\sim}{\rightarrow} 
K \otimes_\Z H^\La_\alpha(\Z),\\
H^\La_\alpha(F)
&\stackrel{\sim}{\rightarrow} F \otimes_\Z H^\La_\alpha(\Z),\label{i2}
\end{align}
such that $e(\bi) \mapsto 1 \otimes e(\bi)$,
$y_r\mapsto 1 \otimes y_r$ and $\psi_r\mapsto 1 \otimes \psi_r$
for each $\bi$ and $r$.
\end{Theorem}

\begin{proof}
Let $H^\La_\alpha(\Q)$ denote the $\Q$-subalgebra of $H^\La_\alpha(K)$
generated by the Khovanov-Lauda generators.
By Theorem~\ref{EF}, we can identify $H^\La_\alpha(K) = K \otimes_\Q 
H^\La_\alpha(\Q)$, and $H^\La_\alpha(\Z)$ can be viewed equivalently as
the subring of $H^\La_\alpha(\Q)$ generated by its Khovanov-Lauda
generators.
By the same argument as in the proof of Corollary~\ref{fda},
$H^\La_\alpha(\Z)$ is spanned as a $\Z$-module
by elements defined in terms of its Khovanov-Lauda generators
like in (\ref{sset}), and only finitely many of these
elements are non-zero. Hence $H^\La_\alpha(\Z)$ is a finitely
generated $\Z$-submodule of $H^\La_\alpha(\Q)$ 
and it generates $H^\La_\alpha(\Q)$ over $\Q$.
As $\Z$ is a principal ideal domain, this implies that
$H^\La_\alpha(\Z)$ is a lattice in $H^\La_\alpha(\Q)$,
hence it is also a lattice in $H^\La_\alpha(K)$
i.e. it is the $\Z$-span of a $K$-basis of $H^\La_\alpha(K)$.
This shows that $H^\La_\alpha(\Z)$ is a free $\Z$-module
and the canonical map $K \otimes_{\Z} H^\La_\alpha(\Z)
\rightarrow H^\La_\alpha(K)$ is an isomorphism.

Now consider the $F$-algebra $F \otimes_{\Z} H^\La_\alpha(\Z)$.
It is generated by the elements
$1 \otimes e(\bi)$,
$1 \otimes y_r$ and $1 \otimes \psi_r$
for all $\bi$ and $r$, and according to the Main Theorem
these elements satisfy 
the same relations as the defining relations of the 
Khovanov-Lauda generators of $H^\La_\alpha(F)$.
Hence there is a surjection
$$
H^\La_\alpha(F) \twoheadrightarrow
F \otimes_{\Z} H^\La_\alpha(\Z),
\quad
e(\bi) \mapsto 1 \otimes e(\bi),
y_r \mapsto 1 \otimes y_r,
\psi_r \mapsto 1 \otimes \psi_r.
$$
To complete the proof of the theorem,
we need to show that this surjection is an isomorphism. 
Note that
$\dim_F F \otimes_\Z H^\La_\alpha(\Z)
= \dim_K H^\La_\alpha(K)$ by the previous paragraph.
In view of the above surjection,
we know that $\dim_F H^\La_\alpha(F) \geq \dim_K H^\La_\alpha(K)$
for all $\alpha$, and are done if we can 
show that equality holds everywhere.
This follows because
$$
\sum_{\substack{\alpha\in Q_+\\
\height(\alpha)=d}}\!\! \dim_F H^\La_\alpha(F)
=
\dim_F H^\La_d(F) =
\dim_K H^\La_d(K) = \!\!\sum_{\substack{\alpha \in Q_+\\\height(\alpha)=d}} \!\!\dim_K H^\La_\alpha(K)
$$
by (\ref{degdim}) and (\ref{qdim}).
\end{proof}

As a consequence
we can explain how to
reduce irreducible $H^\La_\alpha(K)$-modules
modulo $p$ to obtain well-defined $H^\La_\alpha(F)$-modules,
in the spirit of the Brauer-Nesbitt construction in finite group theory.

\begin{Theorem}\label{bct} 
If $D(K)$ is an irreducible $H^\La_\alpha(K)$-module
and $0 \neq v \in D(K)$,
then $D(\Z) := H^\La_\alpha(\Z) v$ is
a lattice in $D(K)$
that is invariant under the action of $H^\La_\alpha(\Z)$.
For any such lattice,
$$
D(F) := F \otimes_\Z D(\Z)
$$ 
is naturally an $H^\La_\alpha(F)$-module
with the same formal character as $D(K)$.
In particular,  the image of $D(F)$
in the Grothendieck group is independent of the choice of lattice.
\end{Theorem}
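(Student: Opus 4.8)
The plan is to deduce everything from Theorem~\ref{Red}, which already exhibits $H^\La_\alpha(\Z)$ as a genuine $\Z$-form of both $H^\La_\alpha(K)$ and $H^\La_\alpha(F)$. First I would verify that $D(\Z):=H^\La_\alpha(\Z)v$ is a full lattice in $D(K)$. It is visibly stable under $H^\La_\alpha(\Z)$, since the latter is a subring of $H^\La_\alpha(K)$. It is a finitely generated $\Z$-module, because $H^\La_\alpha(\Z)$ is free of finite rank over $\Z$ by Theorem~\ref{Red}. Being a $\Z$-submodule of the characteristic-zero vector space $D(K)$, it is torsion-free, hence free of finite rank since $\Z$ is a principal ideal domain. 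Finally it spans $D(K)$ over $K$: the isomorphism $K\otimes_\Z H^\La_\alpha(\Z)\stackrel{\sim}{\rightarrow} H^\La_\alpha(K)$ of Theorem~\ref{Red} shows that $H^\La_\alpha(\Z)$ generates $H^\La_\alpha(K)$ over $K$, and $H^\La_\alpha(K)v=D(K)$ by irreducibility, so $K\cdot D(\Z)=D(K)$. Therefore $\operatorname{rank}_\Z D(\Z)=\dim_K D(K)$, i.e.\ $D(\Z)$ is a full lattice. The same three properties (finitely generated, torsion-free hence free, and $K$-spanning) hold for an arbitrary $H^\La_\alpha(\Z)$-stable lattice in $D(K)$, so the remaining arguments apply to any such lattice, not just to $H^\La_\alpha(\Z)v$.

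Next, given any such lattice $D(\Z)$, form $D(F):=F\otimes_\Z D(\Z)$. Since $D(\Z)$ is an $H^\La_\alpha(\Z)$-module, $D(F)$ carries an action of $F\otimes_\Z H^\La_\alpha(\Z)$, which is identified with $H^\La_\alpha(F)$ via the isomorphism (\ref{i2}) of Theorem~\ref{Red}; this is the promised $H^\La_\alpha(F)$-module structure on $D(F)$.

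It remains to match formal characters, and the point is that the weight-space decomposition is already defined integrally. The orthogonal idempotents $e(\bi)$ are among the Khovanov-Lauda generators, hence lie in $H^\La_\alpha(\Z)$, and by (\ref{R1}) they sum to $1$; thus $D(\Z)=\bigoplus_{\bi\in I^\alpha} e(\bi)D(\Z)$ as $\Z$-modules, and likewise $D(K)=\bigoplus_{\bi\in I^\alpha} e(\bi)D(K)$. Applying $e(\bi)$ to $K\cdot D(\Z)=D(K)$ shows each $e(\bi)D(\Z)$ is a full lattice in the weight space $D(K)_\bi$, so $\operatorname{rank}_\Z e(\bi)D(\Z)=\dim_K D(K)_\bi$. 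Since $D(F)_\bi=e(\bi)D(F)=F\otimes_\Z e(\bi)D(\Z)$ and $e(\bi)D(\Z)$ is free (being a $\Z$-submodule of the free module $D(\Z)$), we get $\dim_F D(F)_\bi=\operatorname{rank}_\Z e(\bi)D(\Z)=\dim_K D(K)_\bi$ for every $\bi\in I^\alpha$, i.e.\ $\ch D(F)=\ch D(K)$. In particular $\ch D(F)$ does not depend on the choice of lattice, and since two finite dimensional modules are equal in the Grothendieck group precisely when they have the same formal character, the class of $D(F)$ in the Grothendieck group of $H^\La_\alpha(F)$ is independent of $D(\Z)$.

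I do not anticipate a genuine obstacle here: all the substance is carried by Theorem~\ref{Red}, which guarantees both that $H^\La_\alpha(\Z)$ is a full $\Z$-form and that its reduction modulo $p$ recovers $H^\La_\alpha(F)$. The only point demanding a moment's care is the compatibility of the weight decomposition with base change, and that is immediate once one observes that the idempotents $e(\bi)$ are part of the chosen integral generating set.
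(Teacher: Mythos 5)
There is a genuine gap in your argument that $D(\Z)=H^\La_\alpha(\Z)v$ is a \emph{full} lattice in $D(K)$, i.e.\ that $\operatorname{rank}_\Z D(\Z)=\dim_K D(K)$. You establish that $D(\Z)$ is finitely generated and torsion-free over $\Z$, hence free of finite rank, and that its $K$-span is all of $D(K)$; you then write ``Therefore $\operatorname{rank}_\Z D(\Z)=\dim_K D(K)$.'' But this inference is false when $K$ is a proper extension of $\Q$: a finitely generated free $\Z$-submodule of a $K$-vector space can have $\Z$-rank strictly larger than the $K$-dimension while still $K$-spanning. For instance, inside $K^2$ with $K=\Q(\sqrt 2)$ and the tautological $M_2(\Z)$-action, the $\Z$-module $M_2(\Z)\cdot(1,\sqrt 2)=\Z[\sqrt 2]^2$ is free of rank $4$ and $K$-spans $K^2$, yet $\dim_K K^2=2$. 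So from the three properties you list one only gets $\operatorname{rank}_\Z D(\Z)\geq\dim_K D(K)$; the reverse inequality needs an additional input, and exactly the same gap propagates to your assertion that each $e(\bi)D(\Z)$ is a full lattice in $e(\bi)D(K)$.

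The paper supplies the missing ingredient via Corollary~\ref{mcor} (rational descent). It first produces an irreducible $H^\La_\alpha(\Q)$-submodule $D(\Q)\subset D(K)$ with $D(K)=K\otimes_\Q D(\Q)$, chosen so that $v\in D(\Q)$. Then $D(\Z)=H^\La_\alpha(\Z)v$ sits inside $D(\Q)$, and since $\Q$ is the fraction field of $\Z$, a finitely generated $\Z$-submodule of $D(\Q)$ that $\Q$-spans is automatically a full lattice in $D(\Q)$, hence also in $D(K)$. You never invoke Corollary~\ref{mcor} and never pass through a $\Q$-form, which is why the rank comparison goes astray. Once fullness of $D(\Z)$ and of the summands $e(\bi)D(\Z)$ is secured in this way, your remaining analysis (the $H^\La_\alpha(F)$-module structure on $F\otimes_\Z D(\Z)$ via~(\ref{i2}), the weight-space comparison, and the conclusion in the Grothendieck group) is fine and agrees with the paper.
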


\begin{proof}
Let $H^\La_\alpha(\Q)$ denote the $\Q$-subalgebra
of $H^\La_\alpha(K)$ generated by the Khovanov-Lauda generators.
By Corollary~\ref{mcor}, there exists 
an irreducible $H^\La_\alpha(\Q)$-submodule $D(\Q)$ of $D(K)$
such that $D(K) = K \otimes_\Q D(\Q)$. 
We can choose this so that the given non-zero vector $v$ belongs to 
$D(\Q)$.
Then $H^\La_\alpha(\Z) v$ is a finitely generated $\Z$-submodule
of $D(\Q)$ which must span $D(\Q)$ over $\Q$ since $D(\Q)$ is irreducible.
This implies that $D(\Z)$ is a lattice in $D(\Q)$, hence also
it is a lattice in $D(K)$.
The rest of the theorem follows easily since 
the $\bi$-weight spaces of $D(K)$ and $D(F)$
are equal to $e(\bi) D(K)$ and $e(\bi) D(F)$,
respectively, and
$e(\bi) D(\Z)$ is a lattice in $e(\bi) D(K)$.
\end{proof}

Theorem~\ref{bct} implies that 
there is a well-defined notion of composition multiplicity
in the reduction modulo $p$ of an irreducible $H^\La_d(K)$-module.
Since the irreducible $H^\La_d(K)$-modules
are understood by \cite{Ariki},
it would be particularly interesting to find
hypotheses on $\alpha$ that ensure that
$D(F)$ is an irreducible $H^\La_\alpha(F)$-module
for every irreducible $H^\La_\alpha(K)$-module $D(K)$.
In level one, there is a precise conjecture for this known as the
James conjecture; see e.g. \cite[$\S$2]{Geck} and \cite{F}.

\end{document}